\documentclass{amsart}

\usepackage[paperheight=11in, 
    paperwidth=8.5in,
    outer=1.35in,
    inner=1.35in,
    bottom=1.5in,
    top=1.5in]{geometry}

\usepackage{amsmath, amsxtra, amsthm, amsfonts, amssymb}
\usepackage{mathtools}
\usepackage{mathrsfs}
\usepackage{graphicx}
\usepackage{url}
\usepackage{color}
\usepackage{bbm}
\usepackage{tikz-cd}

\usepackage[T1]{fontenc}
\linespread{1.2}
\usepackage{enumitem}

 \usepackage[hidelinks,backref=page]{hyperref} 
 \hypersetup{
    colorlinks,
    citecolor=magenta,
    filecolor=magenta,
    linkcolor=blue,
    urlcolor=black
}

\usepackage{cleveref}

\usetikzlibrary{positioning}
\usetikzlibrary{matrix}
\usetikzlibrary{decorations}
\usetikzlibrary{decorations.pathreplacing, decorations.pathmorphing, angles,quotes}

\newcommand{\QQ}{\mathbb Q}
\newcommand{\RR}{\mathbb R}

\newcommand{\PP}{\mathbb P}
\newcommand{\ZZ}{\mathbb Z}
\newcommand{\kk}{\mathbbm{k}}
\newcommand{\M}{\mathrm{M}}
\renewcommand{\P}{\mathrm{P}}
\newcommand{\one}{\mathbf{1}}
\newcommand{\be}{\mathbf e}

\newcommand{\E}{E} 
\newcommand{\EE}{\mathsf{E}} 

\renewcommand{\emptyset}{\varnothing}
\newcommand{\rk}{\operatorname{rk}}
\newcommand{\expand}{\pi^*}
\newcommand{\lift}{\M_\pi}

\theoremstyle{definition}
\newtheorem{theorem}{Theorem}[section]
\newtheorem{definition}[theorem]{Definition}
\newtheorem{proposition}[theorem]{Proposition}
\newtheorem{lemma}[theorem]{Lemma}
\newtheorem{corollary}[theorem]{Corollary}

\newtheorem{remark}[theorem]{Remark}

\begin{document}

\title{Intersection theory of polymatroids}

\author{Christopher Eur}\address{Department of Mathematics, Harvard University, Cambridge, MA 02138, United States}\email{ceur@math.harvard.edu}
\author{Matt Larson}\address{Department of Mathematics, Stanford University, Stanford, CA 94305, United States}\email{mwlarson@stanford.edu}
 
\maketitle

\vspace{-20 pt}

\begin{abstract}
Polymatroids are combinatorial abstractions of subspace arrangements in the same way that matroids are combinatorial abstractions of hyperplane arrangements.  By introducing augmented Chow rings of polymatroids, modeled after augmented wonderful varieties of subspace arrangements, we generalize several algebro-geometric techniques developed in recent years to study matroids.  We show that intersection numbers in the augmented Chow ring of a polymatroid are determined by a matching property known as the Hall--Rado condition, which is new even in the case of matroids.
\end{abstract}

\section{Introduction}
\noindent
Let $\E= \{1, \dotsc, m\}$ be a finite set, and let $\textbf a = (a_1, \ldots, a_m)$ be a sequence of nonnegative integers.
\begin{definition}
A \emph{polymatroid} $\P$ on $\E$ with \emph{cage} $\textbf a$ is 
a function $\operatorname{rk}_\P \colon 2^\E \to \mathbb{Z}_{\ge 0}$ satisfying
\begin{enumerate}
\item (Submodularity) $\operatorname{rk}_\P(S_1) + \operatorname{rk}_\P(S_2) \ge \operatorname{rk}_\P(S_1 \cap S_2) + \operatorname{rk}_\P(S_1 \cup S_2)$ for any $S_1, S_2 \subseteq \E$,
\item (Monotonicity) $\operatorname{rk}_\P(S_1) \le \operatorname{rk}_\P(S_2)$ for any $S_1 \subseteq S_2\subseteq \E$,
\item (Normalization) $\operatorname{rk}_\P(\emptyset) = 0$, and
\item (Cage) $\operatorname{rk}_\P(i) \le a_i$ for any $i \in \E$. 
\end{enumerate}
We say that $\operatorname{rk}_\P$ is the \emph{rank function} of the polymatroid $\P$, and that $\P$ has \emph{rank} $r=\operatorname{rk}_\P(\E)$.
\end{definition}

A polymatroid with cage $(1, \dotsc, 1)$ is a \emph{matroid}.  For the fundamentals of matroid theory we point to \cite{Wel76}.
Introduced as generalizations of matroids \cite{Edm70}, and also known as generalized permutohedra, polymatroids are the central objects in the polyhedral study of combinatorial structures related to the symmetric group \cite{AA, Pos09}.
In those works, two polytopes associated to a polymatroid $\P = (\E,\operatorname{rk}_\P)$ are the \emph{independence polytope} $I(\P)$, defined by
\[
I(\P)  = \Big\{x \in \mathbb{R}^{\E}_{\ge 0} : \sum_{i \in S} x_i \le \operatorname{rk}_\P(S) \text{ for all }S \subseteq \E\Big\},
\]
and the \emph{base polytope} $B(\P)$, which is the face of $I(\P)$ defined by
\[
B(\P) = I(\P) \cap \Big\{x\in \RR^{\E} : \sum_{i\in \E} x_i = \operatorname{rk}_\P(\E)\Big\}.
\]
Both polytopes are re-encodings of the polymatroid $\P$ as follows \cite{Edm70}:
The polytope $B(\P)$ determines $I(\P)$ by $I(\P) = \{x \in \RR^\E_{\geq 0}: y - x \in \RR^\E_{\geq 0} \text{ for some }y\in B(\P)\}$, and the rank function of $\P$ is recovered by $\rk_\P(S) = \max\{ \sum_{i\in S}x_i : x\in I(\P)\} = \max\{ \sum_{i\in S}x_i : x\in B(\P)\}$.

\medskip
We connect polyhedral properties of polymatroids to algebro-geometric properties arising from the intersection theory of varieties associated to their realizations by linear subspaces.
Let $V_1, \ldots, V_m$ be vector spaces of dimensions $a_1, \ldots, a_m$ respectively over a field $\kk$, and let $V = \bigoplus_{i\in \E} V_i$.  A subspace $L\subseteq V$ defines a polymatroid $\P$ on $\E$ with cage $\textbf a = (a_1, \dotsc, a_m)$ whose rank function is
\[
\operatorname{rk}_\P(S) = \dim\Big(\text{image of $L$ under the projection $V\to \bigoplus_{i\in S}V_i$}\Big) \quad \text{for any $S\subseteq \E$}.
\]
We say that $L\subseteq V$ is a \emph{realization} of the polymatroid $\P$ in this case.
A realization $L\subseteq V$ defines a \emph{subspace arrangement} on $L$ that consists of subspaces $\{L_i\}_{i\in \E}$ where $L_i = \ker(L\to V_i)$.  In terms of the subspace arrangement, the rank function of $\P$ is equivalently described as
\[
\operatorname{rk}_\P(S) = \operatorname{codim}_L\Big(\bigcap_{i\in S} L_i\Big) \quad\text{for any $S\subseteq \E$}.
\]
The key geometric object for us is the following compactification of $L\subseteq V$.

\begin{definition}
The \emph{augmented wonderful variety} $W_L$ of a subspace $L \subseteq V = \bigoplus_{i\in \E} V_i$ is
\[
W_L = \text{the closure of the image of $L$ in $\prod_{\emptyset \subsetneq S \subseteq \E} \mathbb{P} \Big(\bigoplus_{i \in S} V_i\oplus \kk\Big)$},
\]
where the map $L \to \mathbb{P}(\bigoplus_{i \in S} V_i\oplus \kk)$ is the composition of the projection $L\to \bigoplus_{i\in S} V_i$ with the projective completion $\bigoplus_{i \in S} V_i \hookrightarrow \PP(\bigoplus_{i\in S} V_i \oplus \kk)$.  
\end{definition}

In the context of matroids and hyperplane arrangements, the augmented wonderful variety was introduced in \cite{BHMPW22}, and it played a central role in the proof of Dowling--Wilson top-heavy conjecture \cite{BHMPWb}.
Augmented wonderful varieties are closely related to the wonderful compactifications of subspace arrangement complements introduced in \cite{dCP95}. 

\smallskip
A special role is played by the \emph{boolean arrangement} $L = \bigoplus_{i \in \E} V_i$ with cage $\textbf{a}$, whose augmented wonderful variety is called the \emph{polystellahedral variety} with cage $\textbf{a}$, denoted $X_{\textbf{a}}$.
Let $A^\bullet(X_{\textbf a})$ be the Chow cohomology ring of $X_{\textbf a}$, which in \Cref{cor:polystellaChow} we show has the presentation
\[
A^{\bullet}(X_{\textbf{a}}) = \frac{\mathbb{Z}[x_S, y_i: \varnothing \subseteq S \subsetneq \E,\ i \in \E]}{\langle x_{S_1} x_{S_2} : S_1, S_2 \text{ incomparable} \rangle + \langle x_S y_i^{a_i} : i \not \in S \rangle + \langle y_i - \sum_{S \not \ni i} x_S : i \in \E \rangle}.
\]
Its grading satisfies $A^{\bullet}(X_{\textbf a}) = \bigoplus_{k=0}^{a_1 + \cdots + a_m} A^k(X_{\textbf a})$, and it is equipped with the degree map, which is an isomorphism
\[
\deg_{X_\textbf a}\colon A^{a_1 + \dotsb + a_m}(X_{\textbf{a}}) \overset\sim\to \mathbb{Z} \quad\text{determined by the property}\quad \deg_{X_{\textbf a}}(y_1^{a_1}\cdots y_m^{a_m}) = 1.
\]
The Chow homology group $A_\bullet(X_{\textbf a})$ is the graded group $\bigoplus_{k=0}^{a_1 + \dotsb + a_m} A_k(X_{\textbf a})$ where $A_k(X_{\textbf a}) = A^{a_1 + \dotsb + a_m - k}(X_{\textbf a})$.

\smallskip
For a polymatroid $\P = (\E,\rk_\P)$ with cage $\textbf{a}$ and rank $r$, we define a homology class $[\Sigma_P] \in A_r(X_{\textbf{a}})$ called the \emph{augmented Bergman class} of $\P$ (\Cref{defn:augBergclass}).
We define the \emph{augmented Chow ring} $A^\bullet(\P)$ of $\P$ by
\[
A^\bullet(\P) = A^{\bullet}(X_{\textbf{a}})/\operatorname{ann}([\Sigma_\P]), \text{ where }\operatorname{ann}([\Sigma_\P]) = \{x \in A^{\bullet}(X_{\textbf{a}}) : x \cdot [\Sigma_\P] = 0\}.
\]
See \Cref{cor:augChow} for an explicit presentation of $A^\bullet(\P)$.
Its grading satisfies $A^\bullet(\P) = \bigoplus_{k=0}^r A^k(\P)$, and it is equipped with the degree map, which is an isomorphism $\deg_{\P}\colon A^r(\P) \overset\sim\to \ZZ$ defined by
\[
\deg_\P(\xi) = \deg_{X_{\textbf a}}(\xi' \cdot [\Sigma_\P]) \text{ for any lift $\xi' \in A^{\bullet}(X_{\textbf{a}})$ of $\xi \in A^{\bullet}(\P)$}.
\]
When a subspace $L\subseteq V$ realizes $\P$, one has an embedding $W_L \hookrightarrow X_{\textbf{a}}$ by the construction of the augmented wonderful variety.
The resulting homology class $[W_L] \in A_r(X_{\textbf{a}})$ equals $[\Sigma_\P]$ (Proposition ~\ref{prop:realizable}), and the Chow ring $A^\bullet(W_L)$ of the augmented wonderful variety $W_L$ coincides with the augmented Chow ring $A^\bullet(\P)$ (Remark~\ref{rmk:chowequiv}).

\medskip
The embedding $X_{\textbf{a}} \hookrightarrow \prod_{\emptyset \subsetneq S \subseteq \E} \mathbb{P}(\bigoplus_{i \in S} V_i\oplus \kk)$ provides the following useful set of generators for the Chow ring of $X_{\textbf a}$.
For each nonempty subset $S \subseteq \E$, let $h_S \in A^1(X_{\textbf{a}})$ be the pullback of the hyperplane class on $\mathbb{P}(\bigoplus_{i \in S} V_i \oplus \kk)$ along the map induced by the embedding $X_{\textbf{a}} \hookrightarrow \prod_{\emptyset \subsetneq S \subseteq \E} \mathbb{P}(\bigoplus_{i \in S} V_i\oplus \kk)$.
We show that $\{h_S: \emptyset\subsetneq S\subseteq \E\}$ generates $A^\bullet(X_{\textbf a})$, and that the monomials in these generators are all of the form $[\Sigma_\P]$ for some polymatroid $\P$ with cage $\textbf a$.
For a polymatroid $\P$, we define $h_S \in A^1(\P)$ to be image of $h_S$ under the quotient map $A^{\bullet}(X_{\textbf{a}}) \to A^{\bullet}(\P)$.
We call these the \emph{simplicial generators} of $A^\bullet(\P)$, motivated by similar terminology in the case of matroids \cite{BES, LLPP}.  These generators were also considered in \cite{Yuz02}.

\smallskip
We show that the intersection numbers of the simplicial generators are described by the \emph{Hall--Rado} condition:  A sequence $S_1, \dotsc, S_r$ of nonempty subsets of $\E$ is said to satisfy the {Hall--Rado} condition (with respect to a polymatroid $\P = (\E,\rk_\P)$) if 
\[
\rk_\P \Big( \bigcup_{j \in J} S_j \Big) \ge |J| \quad\text{for all}\quad J \subseteq \{1, \ldots, r\}.
\]
See \Cref{lem:HR} for an interpretation of this condition in terms of a matching problem.

\begin{theorem}\label{thm:HR}
Let $\P$ be a polymatroid of rank $r$, and let $S_1, \dotsc, S_r$ be a sequence of nonempty subsets of $\E$. Then
$$\deg_\P(h_{S_1} \dotsb h_{S_r}) = \begin{cases} 1 & S_1, \dotsc, S_r \text{ satisfies the Hall--Rado condition,} \\ 
0 & \text{otherwise.} \end{cases}$$
\end{theorem}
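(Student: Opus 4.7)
The plan is to compute
\[
\deg_\P(h_{S_1} \cdots h_{S_r}) = \deg_{X_{\textbf a}}(h_{S_1} \cdots h_{S_r} \cdot [\Sigma_\P])
\]
first in the realizable case and then extend to general polymatroids.

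\emph{Realizable case.} Suppose $\P$ is realized by $L \subseteq V = \bigoplus_{i\in \E} V_i$, so that $[\Sigma_\P] = [W_L]$ by \Cref{prop:realizable} and $r = \dim L$. Since each $h_{S_j}$ is the pullback of $\mathcal O(1)$ from $\PP(\bigoplus_{i \in S_j} V_i\oplus \kk)$, by Bertini a generic representative is an effective divisor $D_j$ whose intersection with $L \subseteq W_L$ is the affine hyperplane $H_j = \{u_j \cdot \pi_{S_j}(v) = c_j\}$ for generic $u_j \in \bigoplus_{i\in S_j}V_i^*$ and $c_j \in \kk$; a dimension count places $W_L \cap \bigcap_j D_j$ entirely inside $L$ and makes it reduced, so the intersection number counts points of $L \cap \bigcap_{j=1}^r H_j$. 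Pulling $u_j$ back yields $\ell_j := \pi_{S_j}^*(u_j) \in L^*$, a generic element of the subspace $W_{S_j} := \pi_{S_j}^*(\bigoplus_{i\in S_j}V_i^*) \subseteq L^*$, and one checks $\dim W_{S_j} = \rk_\P(S_j)$ and $\dim \sum_{j \in J} W_{S_j} = \rk_\P(\bigcup_{j \in J} S_j)$. For generic $c_j$, the system $\{\ell_j = c_j\}_{j=1}^r$ cuts out a single point of $L$ if $\ell_1, \ldots, \ell_r$ are linearly independent in $L^*$, and is empty otherwise. By Rado's theorem on transversal systems of linear subspaces, generic $\ell_j \in W_{S_j}$ are linearly independent iff $\dim \sum_{j \in J} W_{S_j} \ge |J|$ for every $J \subseteq \{1, \ldots, r\}$, which is exactly the Hall--Rado condition for $\P$.

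\emph{General polymatroids.} To remove the realizability hypothesis, I would pursue one of two routes. Option (i): show that both sides of the identity are valuative invariants of $\P$ (with $S_1, \ldots, S_r$ held fixed) and invoke that realizable polymatroids span the valuative group of polymatroids of rank $r$ and type $\textbf a$. Option (ii): induct on $r$ via a reduction formula of the shape
\[
h_S \cdot [\Sigma_\P] = \sum_{\P'} [\Sigma_{\P'}] \text{ in } A_{r-1}(X_{\textbf a}),
\]
where $\P'$ ranges over certain rank $(r-1)$ truncations of $\P$ along $S$, chosen so that some (equivalently, exactly one) $\P'$ in the sum satisfies Hall--Rado for $(S_1, \ldots, S_{r-1})$ precisely when $\P$ satisfies Hall--Rado for the augmented sequence $(S, S_1, \ldots, S_{r-1})$. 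The induction then closes using the base case $r = 0$, where both sides equal $1$.

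\emph{Main obstacle.} The realizable computation already pinpoints the combinatorial mechanism: Hall--Rado emerges from Rado's transversal criterion applied to the subspace system $(W_{S_j})_j$. The crux is the non-realizable extension. Either route requires a concrete handle on the combinatorial definition of $[\Sigma_\P]$, presumably via a simplicial/toric presentation of the augmented Bergman class: enough structure to verify valuative invariance of $\deg_\P$, or to expand $h_S \cdot [\Sigma_\P]$ as a sum and recognize each term as the augmented Bergman class of a specific truncation. Matching the geometric operation ``cut by a generic divisor in $|h_S|$'' to a clean polymatroid operation on $\P$ is the step I expect to require the most care.
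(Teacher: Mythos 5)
Your approach is genuinely different from the paper's, which proves Theorem~\ref{thm:HR} for \emph{all} polymatroids in one stroke by passing through the exceptional isomorphism $\phi_{\textbf a}\colon K(X_{\textbf a})\to A^\bullet(X_{\textbf a})$ of Theorem~\ref{thm:polystellaHRR}: it identifies the degree with a sheaf-theoretic quantity on $X_{\textbf a}$, rewrites the product $h_{S_1}\cdots h_{S_r}\cdot[\Sigma_\P]$ as the class of a Minkowski sum $I(\expand(\P^\perp)) + \Delta_{\pi^{-1}(S_1)}^0 + \cdots + \Delta_{\pi^{-1}(S_r)}^0$ in the polytope algebra, and then (Lemma~\ref{lem:corner}) shows the degree is $1$ or $0$ according to whether the lattice point $\textbf a$ lies in that Minkowski sum, which Lemma~\ref{lem:HR} translates directly into the Hall--Rado condition. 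Your route --- direct geometry on $W_L$ for realizable $\P$ followed by an extension step --- is the kind of argument the paper gestures at in the remarks after Corollary~\ref{cor:basisgenfct}, but those remarks only produce the realizable case of Theorem~\ref{thm:HR}.

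The realizable computation you sketch is essentially right: the hyperplane sections pull back to affine-linear functionals $\ell_j \in W_{S_j} := (\bigcap_{i\in S_j}L_i)^\perp \subseteq L^*$ with $\dim\sum_{j\in J}W_{S_j} = \rk_\P(\bigcup_{j\in J}S_j)$, and the generic-independence criterion for subspace systems (Rado) exactly produces Hall--Rado. The boundary-avoidance step deserves more than a "dimension count" (the boundary strata are $(r-1)$-dimensional and the $|h_{S_j}|$ are base-point free but not ample, so one must argue that $r$ generic members of globally generated systems miss any fixed $(r-1)$-dimensional subvariety; this is doable but should be spelled out), and there are characteristic-$p$ Bertini caveats to track, but these are fixable.

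The real gap is in the extension to non-realizable $\P$. Option (i) asks you to ``show both sides of the identity are valuative.'' The left-hand side $\P\mapsto\deg_\P(h_{S_1}\cdots h_{S_r})$ is valuative by Theorem~\ref{thm:val}, and realizable polymatroids span $\operatorname{Val}_r(\textbf a)$ by Remark~\ref{rem:schubert} --- both substantial ingredients you would need to import. But the right-hand side is $\one\bigl[\exists f\colon[r]\to\E,\ f(j)\in S_j,\ \sum_j\be_{f(j)}\in B(\P)\bigr]$ by Lemma~\ref{lem:HR}, i.e., the truncation-to-$\{0,1\}$ of the count $\sum_{t\in T}\one_{B(\P)}(t)$ over candidate lattice points $t$. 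The count is valuative; the indicator is an ``OR'' of indicators and is \emph{not} linear in $\one_{B(\P)}$, so its valuativity is exactly as nontrivial as Theorem~\ref{thm:HR} itself. Knowing the two sides agree on realizables plus that the left side is valuative only tells you the left side is the unique valuative extension of the realizable restriction of $\one[\text{HR}]$; it does not let you conclude that extension is again $\one[\text{HR}]$. (This is why the paper's own valuativity remark is confined to Corollary~\ref{cor:basisgenfct}, where all $S_j$ are singletons and the right-hand side \emph{is} manifestly valuative --- it is the lattice-point generating function.) Option (ii) would work if the truncation formula were exhibited, but as stated it is a placeholder. To close the argument you either need to prove the combinatorial valuativity of the Hall--Rado indicator directly, or switch to the polytope-algebra mechanism the paper uses, which is uniform in $\P$ and never invokes realizability.
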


At least when $\P$ is realizable, the fact that $\deg_\P(h_{S_1} \dotsb h_{S_r}) = 0$ if $S_1, \dotsc, S_r$ does not satisfy the Hall--Rado condition has a simple geometric explanation. If $\operatorname{rk}_{\P}(S_{i_1} \cup \dotsb \cup S_{i_k}) < k$, then the degree $k$ element $h_{S_{i_1}} \dotsb h_{S_{i_k}}$ is zero because it is pulled back from the image of $W_L$ in $\mathbb{P}(\bigoplus_{i \in S_{i_1}} V_i \oplus \kk) \times \dotsb \times \mathbb{P}(\bigoplus_{i \in S_{i_k}} V_i \oplus \kk)$, which has dimension $\operatorname{rk}_{\P}(S_{i_1} \cup \dotsb \cup S_{i_k}) < k$.

\medskip
We highlight here the following corollary of \Cref{thm:HR}.

\begin{corollary}\label{cor:basisgenfct}
Let $\P$ be a polymatroid on $\E$ of rank $r$.  Then $\frac{1}{r!} \deg_{\P} \big( (\sum_{i\in \E} t_i h_{\{i\}})^r\big)$, the \emph{volume polynomial} of $A^\bullet(\P)$ with respect to $\{h_{\{i\}}: i\in \E\} \subset A^1(\P)$, equals the \emph{basis exponential generating function} of $\P$, which is the polynomial in $\mathbb{Q}[t_i: i\in \E]$ given by
\[
\sum_{\mathbf u\in B(\P)\cap \ZZ^\E} \frac{t^{\mathbf u}}{\mathbf u!}, \quad\text{where $t^{\mathbf u} = t_1^{u_1}\cdots t_m^{u_m}$ and ${\mathbf u}! = u_1!\cdots u_m!$}.
\]
\end{corollary}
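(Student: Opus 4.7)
The plan is to expand the $r$th power by the multinomial theorem, invoke \Cref{thm:HR} term by term, and then recognize the surviving Hall--Rado condition as the defining inequalities of the base polytope $B(\P)$. Since the main content is \Cref{thm:HR}, which is assumed, this corollary should be essentially a bookkeeping exercise rather than a deep argument, and I do not anticipate a genuine obstacle.

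First I would write
\[
\frac{1}{r!}\Big(\sum_{i\in \E} t_i h_{\{i\}}\Big)^r = \sum_{\mathbf{u}\in \ZZ_{\ge 0}^\E,\ |\mathbf{u}|=r} \frac{t^{\mathbf{u}}}{\mathbf{u}!}\, \prod_{i\in \E} h_{\{i\}}^{u_i}
\]
in $A^r(\P)\otimes \QQ[t_i\colon i\in \E]$, using $\binom{r}{\mathbf{u}}=r!/\mathbf{u}!$. Applying $\deg_\P$ then reduces the corollary to computing $\deg_\P\big(\prod_i h_{\{i\}}^{u_i}\big)$ for each composition $\mathbf{u}$ of $r$. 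To apply \Cref{thm:HR}, I interpret this monomial as corresponding to the length-$r$ sequence $(S_1,\ldots,S_r)$ in which the singleton $\{i\}$ appears with multiplicity $u_i$; the theorem then says that the degree is $1$ or $0$ according to whether this sequence satisfies the Hall--Rado condition.

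The remaining step is to verify that, for such sequences of singletons, the Hall--Rado condition is equivalent to $\mathbf{u}\in B(\P)\cap \ZZ^\E$. In one direction, given any $T\subseteq \E$, I take $J=\{j:S_j\subseteq T\}$; then $\bigcup_{j\in J} S_j\subseteq T$ and $|J|=\sum_{i\in T} u_i$, so Hall--Rado forces $\sum_{i\in T} u_i\le \rk_\P(T)$, which combined with $|\mathbf{u}|=r=\rk_\P(\E)$ is exactly the defining condition of $B(\P)$. Conversely, for any $J\subseteq\{1,\ldots,r\}$, letting $T=\bigcup_{j\in J}S_j$ gives $|J|\le \sum_{i\in T}u_i\le \rk_\P(T)$, so $\mathbf{u}\in B(\P)$ implies Hall--Rado. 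Substituting back, only lattice points of $B(\P)$ contribute, yielding exactly $\sum_{\mathbf{u}\in B(\P)\cap \ZZ^\E} \tfrac{t^\mathbf{u}}{\mathbf{u}!}$.

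The only conceptual point worth emphasizing is that the Hall--Rado/polytope equivalence here is the well-known Edmonds-style description of $B(\P)$ by submodular inequalities, specialized to the setting where the sequence is made of singletons; once this dictionary is set up, the proof is one line after \Cref{thm:HR}. There is no real obstacle, so the writeup should be short.
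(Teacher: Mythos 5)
Your proof is correct and takes essentially the same approach as the paper: multinomial expansion, \Cref{thm:HR} term by term, and identification of the Hall--Rado condition for singleton sequences with membership of $\mathbf u$ in $B(\P)$. The paper gets this last equivalence immediately from \Cref{lem:HR} (for singleton sets the map $f$ there is forced and $\sum_j \be_{f(j)} = \mathbf u$, so the condition becomes $\mathbf u \in B(\P)$), whereas you verify it directly from the submodular inequalities defining $B(\P)$; the two arguments are interchangeable.
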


Our results here generalize several previous results in the literature.
\begin{itemize}
\item When $\P$ is realizable and has cage $(1, \ldots, 1)$, \Cref{cor:basisgenfct} specializes to \cite[Theorem 1.3(c)]{Ardila-Boocher}.
\item When $\P$ is realizable, \Cref{thm:HR} specializes to \cite[Proposition 7.15]{CCMM} and the first statement of \cite[Theorem 1.1]{LiImages}.
\item When $\P$ has cage $(1, \ldots, 1)$, \Cref{thm:dHR} (a variant of \Cref{thm:HR}) specializes to \cite[Theorem 5.2.4]{BES}.  When $\P$ is also boolean, it further specializes to \cite[Theorem 9.3]{Pos09} because intersection numbers on toric varieties can be interpreted as mixed volumes. 
\end{itemize}

\smallskip
Many invariants of matroids behave well with respect to matroid polytope decompositions. This leads to the study of the \emph{valuative group} of matroids \cite{AFR10, BEST, DF10}, which gives a powerful tool to study invariants of matroids. We consider the following notion of valuativity for polymatroids with cage $\textbf a$.

\begin{definition}
For a polytope $Q\subset \RR^\E$, let $\one_Q\colon \RR^\E \to \ZZ$ be its indicator function defined by $\one_Q(x) = 1$ if $x\in Q$ and $\one_Q(x) = 0$ otherwise.
The \emph{valuative group} $\operatorname{Val}_r(\mathbf{a})$ of rank $r$ polymatroids with cage $\textbf{a}$ is the subgroup of $\mathbb{Z}^{(\mathbb{R}^\E)}$ generated by $\one_{B(\P)}$ for $\P$ a polymatroid of rank $r$ and with cage $\mathbf{a}$. 
\end{definition}

We show that the valuative group is isomorphic to the homology groups of the polystellahedral variety, generalizing \cite[Theorem 1.5]{EHL}.

\begin{theorem}\label{thm:val}
For any $0\leq r \leq a_1 + \dotsb + a_m$, the map that sends a polymatroid $\P$ with cage $\textbf{a}$ and rank $r$ to $[\Sigma_{\P}]$ induces an isomorphism $\operatorname{Val}_{r}(\textbf a) \overset\sim\to A_r(X_{\textbf{a}})$. 
\end{theorem}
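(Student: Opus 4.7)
The plan is to construct a homomorphism $\phi\colon \operatorname{Val}_r(\textbf{a}) \to A_r(X_{\textbf{a}})$ sending $\one_{B(\P)} \mapsto [\Sigma_\P]$ and then prove it is surjective and injective. For well-definedness I would argue that $\P \mapsto [\Sigma_\P]$ is a valuative invariant. The cleanest route depends on the combinatorial definition of the augmented Bergman class: if $[\Sigma_\P]$ is built from a fan structure refining the normal fan of $B(\P)$ (as in the matroid case), valuativity should be essentially immediate from the compatibility of such combinatorial sums with polymatroid polytope decompositions. Failing that, by Poincar\'e duality on the smooth projective variety $X_{\textbf{a}}$, the class $[\Sigma_\P] \in A_r(X_{\textbf{a}}) = A^{n-r}(X_{\textbf{a}})$ (with $n = a_1 + \cdots + a_m$) is determined by the linear functional $\xi \mapsto \deg_{X_{\textbf{a}}}(\xi \cdot [\Sigma_\P])$ on $A^r(X_{\textbf{a}})$; since $\{h_S\}$ generates $A^\bullet(X_{\textbf{a}})$, it suffices to check valuativity of $\deg_{X_{\textbf{a}}}(h_{S_1}\cdots h_{S_r}\cdot [\Sigma_\P])$ for each tuple $(S_1, \ldots, S_r)$, which by Theorem \ref{thm:HR} equals the Hall--Rado indicator, and can be unwound into a statement about lattice-point data in $I(\P)$ via Lemma \ref{lem:HR}.

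Surjectivity is essentially built into the setup. The paper asserts that monomials in the $h_S$'s generate $A^\bullet(X_{\textbf{a}})$ additively, and that every such monomial equals $[\Sigma_\P]$ for some polymatroid $\P$ whose rank matches the codegree. Taking monomials of cohomological degree $n - r$ therefore yields a spanning set of $A_r(X_{\textbf{a}}) = A^{n-r}(X_{\textbf{a}})$ consisting of augmented Bergman classes of rank $r$ polymatroids of type $\textbf{a}$, so $\phi$ is surjective.

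The heart of the proof is injectivity, which I would adapt from \cite[Theorem 1.5]{EHL}. The plan is to produce a Derksen--Fink-style basis of $\operatorname{Val}_r(\textbf{a})$ indexed by a polymatroid analog of Schubert matroids, together with a matched family of $h_S$-monomials in $A^r(X_{\textbf{a}})$, such that Theorem \ref{thm:HR} makes the pairing matrix between the two families triangular with unit diagonal under a suitable partial order on Schubert polymatroids. Unimodularity of the pairing then forces each family to be a $\ZZ$-basis of its group, and $\phi$ matches the two bases, giving an isomorphism of free $\ZZ$-modules. The main obstacle is the combinatorial construction: one must develop an appropriate notion of Schubert polymatroid of type $\textbf{a}$ and rank $r$, verify it gives a basis of $\operatorname{Val}_r(\textbf{a})$, and simultaneously produce a compatible $h_S$-monomial basis of $A^r(X_{\textbf{a}})$ whose Hall--Rado pairing matrix is triangular and unimodular under a chosen order; the polymatroid setting is richer than the matroid case because of the type data $\textbf{a}$, so the careful bookkeeping of this matching is where the real work lies.
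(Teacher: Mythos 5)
Your outline correctly identifies the three things to establish (well-definedness, surjectivity, injectivity) and the surjectivity sketch is sound, but both other parts have gaps that are not just bookkeeping, and the paper takes a genuinely different route that avoids them.

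For well-definedness, neither of your suggested routes works as stated. The augmented Bergman fan $\Sigma_\P$ is a subfan of the polystellahedral fan $\Sigma_{\textbf a}$ in $\RR^\EE$; it is \emph{not} a refinement of the normal fan of $B(\P)\subset\RR^\E$ (they do not even live in the same space in general), so the ``compatibility with polytope decompositions should be immediate'' premise is false. Your fallback is to check, via Poincar\'e duality, that $\P\mapsto\deg_{X_{\textbf a}}(h_{S_1}\cdots h_{S_r}\cdot[\Sigma_\P])$ is valuative; by Theorem~\ref{thm:HR} this is the Hall--Rado indicator. But the Hall--Rado indicator is an \emph{existential} statement (``there is some $f\colon[r]\to\E$ with $f(j)\in S_j$ and $\sum_j\be_{f(j)}\in B(\P)$''), not a lattice-point count, and a $\{0,1\}$-valued function of a polytope is generically not valuative. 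It does reduce to a single lattice-point evaluation when each $S_j$ is a singleton, but the general $h_S$ cannot be rewritten as a $\ZZ$-linear combination of the $h_{\{i\}}$ (the $h_{\{i\}}$ only span the span of the $y_i$, not all of $A^1(X_{\textbf a})$), so this reduction does not cover all the monomials needed for Poincar\'e duality. Making this route work would require showing the Hall--Rado indicator equals a lattice-point evaluation in an auxiliary polymatroid $\widetilde\P$ constructed valuatively from $\P$ (cf.\ the remark after Corollary~\ref{cor:basisgenfct}), which is precisely the nontrivial content you are eliding. For injectivity, you propose a Derksen--Fink-type basis of $\operatorname{Val}_r(\textbf a)$ by ``Schubert polymatroids,'' but this basis is not in the literature for polymatroids of type $\textbf a$; in fact Remark~\ref{rem:schubert} notes that such a basis ``can also be obtained from the techniques of \cite{DF10} \emph{and} Theorem~\ref{thm:val},'' so constructing it independently is genuinely new work that you have sketched but not done.

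The paper's proof circumvents both issues. Well-definedness is automatic because the map is built through the polytope algebra $\overline{\mathbb{I}}(\Sigma_{\textbf a})\simeq K(X_{\textbf a})$ (Corollary~\ref{cor:polytopealgebra}): the assignment $\one_{B(\P)}\mapsto\one_{I(\expand(\P^\perp))}$ manifestly respects relations among indicator functions, and then the exceptional isomorphism $\phi_{\textbf a}$ of Theorem~\ref{thm:polystellaHRR} sends this to $A^\bullet(X_{\textbf a})$ with leading term $[\Sigma_\P]$. Injectivity is obtained by pulling back along $u\colon X_\EE\to X_{\textbf a}$, using $u^*[\Sigma_\P]=[\Sigma_{\lift(\P)}]$ (Lemma~\ref{lem:augpullback}) to land in the already-established matroid case of \cite[Theorem 1.5]{EHL}, then projecting back down using that $p_\pi(B(\lift(\P_j)))=B(\P_j)$ (here $\Gamma_{\textbf a}$-symmetry of the lifts is what makes the indicator-function relation descend). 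This reduction to the matroid case via the multisymmetric lift is the structural idea your proposal is missing, and it is what lets the paper avoid re-deriving a Derksen--Fink theorem for polymatroids.
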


To prove \Cref{thm:val}, we show that a choice of isomorphism $V_i \simeq \kk^{a_i}$ for each $i\in \E$ realizes $X_{\textbf{a}}$ as a toric variety (\Cref{prop:toricisom}).
This gives a description of the Grothendieck ring of vector bundles $K(X_{\textbf{a}})$ in terms of certain polytopes in $\mathbb{R}^{a_1 + \dotsb + a_m}$ (\Cref{ssec:polytopealg}). We relate $\bigoplus_r \operatorname{Val}_r(\textbf{a})$ to this polytopal description. We then prove an exceptional Hirzebruch--Riemann--Roch-type theorem (\Cref{thm:polystellaHRR}) that leads to the proofs of both Theorems~\ref{thm:HR} and \ref{thm:val}.

\medskip
The paper is organized as follows. In Section 2, we discuss polystellahedral varieties from the point of view of toric geometry. In Section 3, we construct the augmented Bergman fan of a polymatroid and develop its basic properties. In Section 4, we study the $K$-ring of the polystellahedral variety. In Section 5, we prove Theorem~\ref{thm:HR} and~\ref{thm:val}. In Section 6, we prove analogs of Theorem~\ref{thm:HR} and \ref{thm:val} for the polypermutohedral variety.

\subsection*{Acknowledgements}
We thank June Huh for many invaluable conversations related to polymatroids, including suggesting the statements of Theorems~\ref{thm:HR} and \ref{thm:val}.
We thank the referees for many helpful comments. 
The first author is supported by NSF Grant DMS-2001854, and the second author is supported by an NDSEG fellowship. 

\subsection*{Notations} 
All varieties are over an algebraically closed field $\kk$. 
For a subset $S\subseteq \{1, \dotsc, \ell\}$, let $\be_S = \sum_{i\in S} \be_i$ be the sum of standard basis vectors in $\RR^\ell$.  Denote by $(\cdot, \cdot)$ the standard inner product.  For polyhedra and toric varieties, we follow conventions of \cite{Ful93, CLS11}. For a rational polyhedral fan $\Sigma$, we let $X_{\Sigma}$ be the toric variety associated to $\Sigma$.

\section{The toric geometry of polystellahedral varieties}

We introduce the \emph{polystellahedral fan} (with cage $\textbf a$) and study the properties of the associated toric variety.
This amounts to developing basic properties of the polystellahedral variety $X_{\textbf a}$, since we will show that any choice of isomorphisms $V_i \simeq \kk^{a_i}$ for all $i\in \E$ induces an isomorphism between $X_{\textbf a}$ and the toric variety associated to the polystellahedral fan.

\subsection{Polystellahedral fans}

Set $n = a_1 + \dotsb + a_m$, and let $\EE$ be a set of cardinality $n$.  A map $\pi\colon \EE \to E$, which defines a partition $\EE = \bigsqcup_{i\in \E} \pi^{-1}(i)$, is said to have cage $\textbf a$ if $|\pi^{-1}(i)| = a_i$ for all $i\in \E$.

\begin{definition}\label{def:polystellfan}
A \emph{compatible pair} with respect to a map $\pi\colon \EE \to \E$ is a pair $I\leq \mathcal F$ consisting of a subset $I\subseteq \EE$ and a chain $\mathcal F = \{F_1 \subsetneq F_2 \subsetneq \cdots \subsetneq F_k \subsetneq F_{k+1} = \E\}$ of proper subsets of $\E$ such that if $\pi^{-1}(S) \subseteq I$ for a subset $S\subseteq \E$, then $S\subseteq F_1$.

The \emph{polystellahedral fan} $\Sigma_{\pi}$ is the fan in $\mathbb{R}^{\EE}$ whose cones are in bijection with compatible pairs, with a compatible pair $I \le \mathcal{F}$ corresponding to the cone
$$\sigma_{I \le \mathcal{F}} = \operatorname{cone}(-\be_{\EE \setminus \pi^{-1}(F_1)}, \dotsc, -\be_{\EE \setminus \pi^{-1}(F_k)}) + \operatorname{cone}(\be_{i}: i\in I).$$
Its rays are denoted $\rho_i = \mathbb{R}_{\ge 0} \be_i$ for $i \in \EE$ and $\rho_{F} = \mathbb{R}_{\ge 0}( -\be_{\EE \setminus \pi^{-1}(F)})$ for $\emptyset\subseteq F \subsetneq \E$.
\end{definition}

Note that the fan $\Sigma_\pi$ depends only on the map $\EE \to \pi(\EE)$, not the codomain $\E$ of $\pi$.
A polystellahedral fan $\Sigma_{\textbf a}$ with cage $\textbf a$ is a fan $\Sigma_\pi$ where $\pi$ has cage $\textbf a$.
We note two extreme cases:
\begin{itemize}
\item When $\pi$ has cage $(n)$, the fan $\Sigma_\pi$ is the inner normal fan of the $n$-dimensional standard simplex $\operatorname{conv}(\{0\} \cup \{\be_j: j\in \EE\})$ in $\RR^{\EE}$.  We denote this fan by $\Sigma_n$.
\item When $\pi$ has cage $(1, \ldots, 1)$, the fan $\Sigma_\pi$ is the \emph{stellahedral fan} on $\EE$ in \cite{EHL}.  We denote this fan by $\Sigma_{\EE}$.
\end{itemize}
A general polystellahedral fan in $\RR^\EE$ is both a refinement of $\Sigma_n$ and a coarsening of $\Sigma_\EE$ in the following way.
For two maps $\pi\colon \EE \to E$ and $\pi'\colon \EE \to \E'$, let us say $\pi$ \emph{refines} $\pi'$, denoted $\pi\succeq \pi'$, if the corresponding partitions form a refinement, i.e., for every $i\in \E$ one has $\pi^{-1}(i) \subseteq {\pi'}^{-1}(i')$ for some $i'\in \E'$.
Recall that for a simplicial fan $\Sigma$ and a vector $v$ in its support, the \emph{stellar subdivision} of $\Sigma$ by $v$ is the new fan whose set of rays are $\{\text{rays of }\Sigma\} \cup \{\rho_v = \RR_{\geq 0}v\}$ and the set of cones are
$
\{\sigma\in \Sigma: v\notin \sigma\} \cup \{\sigma \cup \rho_v : \sigma\in \Sigma \text{ such that } v\notin \sigma \text{ and } v\in \sigma' \text{ for some } \sigma\subset \sigma'\in \Sigma\}
$.

\begin{proposition}\label{prop:polystellafan}
For a refinement $\pi \succeq \pi'$, let $(S_1, \ldots, S_k)$ be a sequence consisting of the subsets $S\subseteq \E$ such that ${\pi}^{-1}(S) \neq {\pi'}^{-1}(S')$ for any $S'\subseteq \E'$, ordered in a way that $|S_1| \geq \cdots \geq |S_k|$.  Then the fan $\Sigma_{\pi}$ is the result of the sequence of stellar subdivisions of the fan $\Sigma_{\pi'}$ by the sequence of vectors $(-\be_{\EE\setminus {\pi}^{-1}(S_1)}, \ldots, -\be_{\EE\setminus{\pi}^{-1}(S_k)})$.  Moreover, at each step of the sequence of stellar subdivisions, the resulting fan is projective and unimodular with respect to the lattice $\ZZ^{\EE}$.
\end{proposition}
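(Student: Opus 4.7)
The plan is to prove the proposition by induction on $k$, the number of new subsets $S_i$. First I would note that since $\pi\succeq\pi'$ is witnessed by a surjection $q\colon\E\to\E'$ with $\pi'=q\circ\pi$, a ray $-\be_{\EE\setminus\pi^{-1}(F)}$ already lies in $\Sigma_{\pi'}$ if and only if $F$ is saturated under $q$, i.e., $F=q^{-1}(F')$ for some $F'\subseteq\E'$. Thus the ``new'' rays of $\Sigma_\pi$ relative to $\Sigma_{\pi'}$ are precisely the $v_i=-\be_{\EE\setminus\pi^{-1}(S_i)}$ for $i=1,\ldots,k$, and we must show that they can be added one at a time in decreasing-size order as stellar subdivisions.

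For the first subdivision I would partition $\E'$ via
\[
S'_{\mathrm{in}} = \{s'\colon q^{-1}(s')\subseteq S_1\},\quad S'_{\mathrm{frac}} = \{s'\colon\emptyset\neq q^{-1}(s')\cap S_1\subsetneq q^{-1}(s')\},
\]
and $S'_{\mathrm{out}}$ the rest, and verify by inspecting coordinates the identity
\[
v_1 = -\be_{\EE\setminus(\pi')^{-1}(S'_{\mathrm{in}})} + \sum_{j\in I_1}\be_j,\qquad I_1 = \pi^{-1}(S_1)\setminus(\pi')^{-1}(S'_{\mathrm{in}}).
\]
This exhibits $v_1$ as the sum of the primitive ray generators of the cone $\sigma_{I_1\leq\{S'_{\mathrm{in}}\}}$ of $\Sigma_{\pi'}$. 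The pair $(I_1,\{S'_{\mathrm{in}}\})$ is compatible because any $S''\subseteq\E'$ with $(\pi')^{-1}(S'')\subseteq I_1$ would force each $s'\in S''$ to satisfy both $q^{-1}(s')\subseteq S_1$ and $q^{-1}(s')\cap q^{-1}(S'_{\mathrm{in}})=\emptyset$, hence $S''=\emptyset$. So $v_1$ lies in the relative interior of the unimodular cone $\sigma_{I_1\leq\{S'_{\mathrm{in}}\}}$, and stellar subdivision at $v_1$ preserves unimodularity by the standard fact about subdividing at a sum of ray generators of a unimodular cone, and preserves projectivity because the induced toric morphism is the blowup of the projective variety along the torus-invariant closed subvariety associated to $\sigma_{I_1\leq\{S'_{\mathrm{in}}\}}$.

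For the inductive step, the ordering $|S_1|\geq\cdots\geq|S_k|$ ensures that any new $S_j\supsetneq S_i$ satisfies $j<i$, so its ray $v_j$ is already present in $\Sigma^{(i-1)}$. An analogous decomposition identifies the cone $\sigma_i$ of $\Sigma^{(i-1)}$ containing $v_i$ in its relative interior: its primitive ray generators may include some $v_j$ for previously-handled minimal new supersets $S_j\supsetneq S_i$, together with a ray from $\Sigma_{\pi'}$ corresponding to the largest saturated core inside $S_i$, plus standard basis vectors $\be_\ell$ for the remaining elements of $\pi^{-1}(S_i)$. In every case $v_i$ equals the sum of these generators, so the unimodularity and projectivity arguments repeat. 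After all $k$ subdivisions, matching compatible pairs against those of $\Sigma_\pi$ yields $\Sigma^{(k)}=\Sigma_\pi$.

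The main obstacle is the inductive identification of $\sigma_i$: keeping track of how $\Sigma^{(i-1)}$ differs from $\Sigma_{\pi'}$, particularly when $S_i$ has several minimal new supersets already processed, requires careful bookkeeping of the stellar subdivision combinatorics to see which previously-created rays $v_j$ appear as generators of $\sigma_i$ and which are absent. The decreasing-size ordering is essential: subdividing in any other order could require $v_i$ to lie in a cone involving some $\rho_{S_j}$ with $|S_j|<|S_i|$ that does not yet exist, so the stellar subdivision would fail to be well-defined.
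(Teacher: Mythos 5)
Your approach is a direct inductive argument, fundamentally different from the paper's. The paper identifies $\Sigma_\pi$ with the fan of a building set $\mathcal G_\pi$ on the boolean lattice of $\EE \cup \{0\}$ (namely $\mathcal G_\pi = \EE \cup \{\pi^{-1}(S) \cup 0 : S\subseteq \E\}$), observes that $\pi \succeq \pi'$ corresponds to an inclusion $\mathcal G_{\pi'} \subseteq \mathcal G_\pi$, and then cites Feichtner--M\"uller and Feichtner--Yuzvinsky for the stellar-subdivision description and for projectivity/unimodularity at each step. This offloads all of the combinatorial bookkeeping to a general theorem. Your route, by contrast, tries to track the subdivisions one cone at a time.

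Your base case is essentially correct: the identity $v_1 = -\be_{\EE\setminus(\pi')^{-1}(S'_{\mathrm{in}})} + \sum_{j\in I_1}\be_j$ is right, the compatibility of $(I_1,\{S'_{\mathrm{in}}\})$ is verified, and writing $v_1$ as the barycenter of a smooth cone gives the blowup/projectivity/unimodularity conclusions. But the inductive step is a genuine gap, and you flag it yourself. The difficulty is not minor bookkeeping: to show that the stellar subdivision at $v_i$ is well-defined you must exhibit the minimal cone of $\Sigma^{(i-1)}$ containing $v_i$, show $v_i$ is its barycenter, and then prove that after all $k$ subdivisions the resulting fan not only has the correct rays but coincides \emph{as a fan} with $\Sigma_\pi$ (the last sentence ``matching compatible pairs against those of $\Sigma_\pi$ yields $\Sigma^{(k)}=\Sigma_\pi$'' is precisely what needs proof). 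Identifying $\sigma_i$ is tricky because $\Sigma^{(i-1)}$ is no longer described by compatible pairs for any single map, and when $S_i$ has several incomparable new supersets already processed, those rays cannot all appear in a single cone, so the description of $\sigma_i$ you sketch (some subset of the $v_j$'s plus a saturated core plus standard basis vectors) needs a careful case analysis that is not given. The building-set machinery in the paper's proof exists precisely to handle this induction once and for all; without invoking it (or some equivalent structure theorem for iterated stellar subdivisions by nested families), the argument is incomplete.
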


We prove the proposition using \emph{building sets}, which were introduced in \cite{dCP95} and studied in \cite{FY04, FS05}.
We first review the special case of building sets on a boolean lattice here following \cite[Section 7]{Pos09}, which is simpler than the general case. We will discuss building sets in a more general context in Section~\ref{ssec:augbergman}.
A \emph{building set} on $\E$ is a collection $\mathcal G \subseteq 2^\E$ of subsets of $\E$ such that $\mathcal G$ contains $\E$ and $\{i\}$ for each $i\in \E$, and if $S$ and $S'$ are in ${\mathcal G}$ and $S\cap S' \neq \emptyset$, then $S\cup S' \in \mathcal{G}$.
A \emph{nested set} of ${\mathcal G}$ is a collection $\{X_1, \ldots, X_k\} \subseteq {\mathcal G}$ such that for every subcollection $\{X_{i_1}, \ldots, X_{i_\ell}\}$ with $\ell \geq 2$ consisting only of pairwise incomparable elements, one has $\bigcup_{j=1}^\ell X_{i_j} \notin {\mathcal G}$.
The fan associated to $\mathcal G$ is the fan $\Sigma_{\mathcal G}$ in $\RR^\E/ \RR \be_\E$ whose cones are
\[
\{\text{the image in $\RR^\E/\RR\be_\E$ of } \operatorname{cone}\{\be_{X_1}, \ldots, \be_{X_k}\} \subset \RR^E: \{X_1, \ldots, X_k\} \text{ a nested set of $\mathcal G$}\}.
\]

\begin{proof}
Let $\EE\cup \{0\}$ be the disjoint union of $\EE$ with an extra element 0. We have an isomorphism $\RR^{\EE \cup \{0\}}/\RR \be_{\EE \cup \{0\}} \simeq \RR^{\EE}$ induced by $\be_i \mapsto \be_i$ for $i\in \EE$ and $\be_0 \mapsto -\sum_{i\in \EE} \be_i$.  It is straightforward to verify that, under this isomorphism, the fan $\Sigma_\pi$ equals the fan $\Sigma_{\mathcal G_\pi}$ in $\RR^{\EE \cup 0}/\RR \mathbf 1$ associated to the building set $\mathcal G_\pi = \{\{i\} : i \in\EE\} \cup \{\pi^{-1}(S) \cup 0 : \emptyset\subseteq S\subseteq \E\}$ on the boolean lattice of $\EE \cup \{0\}$.  If $\pi\succeq \pi'$, then we have $\mathcal G_\pi \supseteq \mathcal G_{\pi'}$, and the desired statements in the proposition are now special cases of \cite[Theorem 4.2]{FM05} and \cite[Proposition 2]{FY04}.
\end{proof}

\subsection{Polystellahedral varieties}
Let us fix the following notation for the rest of a paper.

\smallskip
\noindent\textbf{Notation.} Let $\EE$ be a set of size $n \coloneqq a_1 + \dotsb + a_m$, and let $\pi\colon \EE \to E$ be a map with cage $\textbf a$.

\smallskip
Let $X_\pi$ be the toric variety associated to the polystellahedral fan $\Sigma_\pi$.
We record some properties of $X_\pi$ arising from the properties of the fan $\Sigma_\pi$, starting with the fact that $X_\pi$ is isomorphic to the polystellahedral variety $X_{\textbf a}$ with cage $\textbf a$.

\medskip
As before, let $V = \bigoplus_{i\in \E}V_i$ be the direct sum of vector spaces where $\dim V_i = a_i = |\pi^{-1}(i)|$ for all $i\in \E$.  Denote by $GL_{\textbf a}$ the group $ \prod_{i\in \E} GL(V_i)$.
Recall  that $X_{\mathbf a}$ is the closure of the image of the map $V \to \prod_{\emptyset\subsetneq S \subseteq \E} \PP(\bigoplus_{i\in S} V_i \oplus \kk)$.  Because this map is $GL_{\textbf a}$-equivariant, the group $GL_{\textbf a}$ acts naturally on the variety $X_{\textbf a}$.

\begin{proposition}\label{prop:toricisom}
Any choice of isomorphisms $V_i \simeq \kk^{\pi^{-1}(i)}$ for each $i\in \E$, which gives a natural embedding of the torus $(\kk^*)^\EE \hookrightarrow GL_{\textbf a}$, identifies $X_{\textbf a}$ with the toric variety $X_\pi$ of the fan $\Sigma_\pi$.
\end{proposition}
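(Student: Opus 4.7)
The plan is to first establish that the chosen isomorphisms equip $X_{\textbf{a}}$ with the structure of a toric variety under $T := (\kk^*)^\EE$, and then to identify its fan with $\Sigma_\pi$ via an inductive argument using \Cref{prop:polystellafan}.

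For the toric structure, I note that the isomorphisms $V_i \simeq \kk^{\pi^{-1}(i)}$ embed $T$ into $GL_{\textbf{a}}$ as the diagonal torus. Since the rational map $V \to \prod_{\emptyset \subsetneq S \subseteq \E} \PP(\bigoplus_{i\in S} V_i \oplus \kk)$ is $GL_{\textbf{a}}$-equivariant, $T$ acts on $X_{\textbf{a}}$. The image of $T \subset V$ is Zariski-dense in the image of $V$, hence dense in $X_{\textbf{a}}$. The action is faithful because on the $S = \E$ factor, $T$ acts as the standard torus on $\PP(V \oplus \kk) = \PP^n$. This exhibits $X_{\textbf{a}}$ as a toric variety under $T$ with cocharacter lattice naturally identified with $\ZZ^\EE$.

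To identify its fan with $\Sigma_\pi$, I would argue by induction on the refinement order, applying \Cref{prop:polystellafan} to the refinement $\pi \succeq \pi_0$ where $\pi_0 \colon \EE \to \{*\}$ has type $(n)$. In the base case $\pi = \pi_0$, we have $X_{\textbf{a}} = \PP(V \oplus \kk) = \PP^n$, whose toric structure matches the inner normal fan $\Sigma_n$ of the standard simplex. For the inductive step, a single stellar subdivision at $-\be_{\EE \setminus \pi^{-1}(S)}$ corresponds, via the toric dictionary \cite[\S 3.3]{CLS11}, to blowing up the smooth torus-invariant subvariety (or its strict transform) whose defining cone has rays $\{-\be_\EE\} \cup \{\be_j : j \in \pi^{-1}(S)\}$, summing to $-\be_{\EE \setminus \pi^{-1}(S)}$. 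Reading this cone off in coordinates on $\PP(V \oplus \kk)$, one finds the corresponding subvariety is $Z_S := \PP(\bigoplus_{i \notin S} V_i) \subset \PP(V) \subset \PP(V \oplus \kk)$.

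The geometric matching now comes from observing that $Z_S$ is exactly the indeterminacy locus of the rational map $\PP(V \oplus \kk) \dashrightarrow \PP(\bigoplus_{i \in S} V_i \oplus \kk)$ induced by the projection $V \to \bigoplus_{i \in S} V_i$, so that adjoining the factor $\PP(\bigoplus_{i \in S} V_i \oplus \kk)$ to the defining product amounts to blowing up (the strict transform of) $Z_S$. The main obstacle is thus to verify that $X_{\textbf{a}}$ is the iterated blow-up of $\PP(V \oplus \kk)$ along the strict transforms of the $Z_S$ in the order prescribed by \Cref{prop:polystellafan} (decreasing $|S|$, i.e., smallest-dimensional centers first). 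This will follow from the standard theory of wonderful compactifications \cite{dCP95}, adapted to the augmented setting: the prescribed order ensures that at each stage the blow-up center is smooth and transverse to the previously created exceptional divisors, so that the closure of the graph of the combined rational maps coincides with the iterated blow-up.
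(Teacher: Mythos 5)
Your argument follows essentially the same route as the paper's: identify $\PP(V\oplus\kk)$ with $\PP^n$ carrying $\Sigma_n$, use \Cref{prop:polystellafan} (applied to $\pi \succeq \pi_0$) to realize $\Sigma_\pi$ as a sequence of stellar subdivisions, translate these to iterated blow-ups of the loci $\PP(\bigoplus_{i\notin S}V_i)$, and appeal to the de Concini--Procesi wonderful compactification machinery to identify the result with the closure of the image of the rational map defining $X_{\textbf a}$. The paper handles the last step by citing \cite[\S1.6 Proposition (2)]{dCP95} directly rather than ``the standard theory,'' but the content is the same; your added preliminary paragraph verifying that $T$ acts with dense orbit is a reasonable supplement that the paper leaves implicit. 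One small slip: ``decreasing $|S|$, i.e.\ smallest-dimensional centers first'' is not accurate when the $a_i$ are unequal, since $\dim Z_S = n - |\pi^{-1}(S)| - 1$ need not be monotone in $|S|$; the order that matters (and that \Cref{prop:polystellafan} prescribes) is decreasing $|S|$, which guarantees that whenever $Z_{S_1}\subset Z_{S_2}$ (i.e.\ $S_2 \subset S_1$) the smaller center is blown up first, as required by the building-set formalism.
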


Thus, from this point on, we will identify $X_{\textbf a}$ with the toric variety $X_\pi$, although the identification depends on the choices of isomorphisms $V_i \simeq \kk^{\pi^{-1}(i)}$ for all $i\in \E$.

\begin{proof}
With the isomorphisms $V_i \simeq \kk^{\pi^{-1}(i)}$ for all $i\in \E$, the projective space $\PP^{\EE} = \PP(\kk^\EE \oplus \kk) \simeq \PP(V \oplus \kk)$ with the obvious action of $(\kk^*)^\EE$ is the toric variety of the fan $\Sigma_n$.
For a subset $S\subseteq \E$, let $L_S = \kk^{\pi^{-1}(E\setminus S)} \oplus 0 \subset \kk^{\pi^{-1}(E\setminus S)} \oplus \kk$.  If $S$ is a proper subset, then $\PP(L_{S})$ is the hyperplane at infinity of the coordinate subspace $ \PP(\kk^{\pi^{-1}(E\setminus S)} \oplus \kk) \simeq \PP(\bigoplus_{i\in E\setminus S} V_i \oplus \kk)$ of $\PP^\EE$.
Note the complementation, and note that $\PP(L_S)$ is $GL_{\textbf a}$-invariant for any $\emptyset\subseteq S\subsetneq E$.

We apply \Cref{prop:polystellafan} with $\pi'$ being the map from $\EE$ to a singleton set, which describes the fan $\Sigma_\pi$ as a sequence of stellar subdivisions of the fan $\Sigma_n$.  Translated into toric geometry terms, it states that the toric variety $X_\pi$ of the fan $\Sigma_\pi$ is obtained from $\PP^\EE$ via a sequence of blow-ups as follows:  Order the proper subsets of $\E$ so that their cardinalities are non-strictly decreasing, then sequentially blow-up the (strict transforms of) the loci $\PP(L_{S})$ in that order.  This sequential blow-up is also the description of the \emph{wonderful compactification} of the complement of the subspace arrangement $\{\PP(L_{\{i\}}): i\in \E\}$ in $\PP^\EE$, introduced in \cite{dCP95}.  \cite[\S1.6 Proposition (2)]{dCP95} moreover states that this wonderful compactification is also the closure of the image of the rational map $\PP^\EE \dashrightarrow \prod_{\emptyset\subsetneq S \subseteq E} \PP\big((\kk^\EE\oplus \kk)/L_S\big)$, which, when restricted to $V \simeq \kk^\EE \subset \PP^\EE$, is exactly the map $V \to \prod_{\emptyset\subsetneq S \subseteq \E} \PP(\bigoplus_{i\in S} V_i \oplus \kk)$.
\end{proof}

\begin{remark}\label{rem:gamma1}
Let $\Gamma_{\textbf a}$ be the product $\prod_{i\in \E} \mathfrak S_{\pi^{-1}(i)}$ of permutation groups.
Because $\Gamma_{\textbf a}$ acts naturally on the fan $\Sigma_\pi$ by permuting the coordinates of $\RR^\EE$, the group $\Gamma_{\textbf a}$ acts on the variety $X_\pi$.
Under the identification $X_{\textbf a} \simeq X_\pi$, this action agrees with the action of $\Gamma_{\textbf a}$ embedded in $GL_{\textbf a}$ via the isomorphism $\bigoplus_{i\in \E} V_i \simeq \bigoplus_{i\in \E} \kk^{\pi^{-1}(i)}$.
\end{remark}

We record the following presentation of the Chow ring of $X_{\textbf a}$.
For a proper subset $S$ of $\E$ and an element $j\in \EE$, 
let $x_S$ and $\widetilde y_j$ denote the toric divisors of $X_{\textbf a}$ corresponding to the rays $\rho_S$ and $\rho_j$ of $\Sigma_{\textbf a}$, respectively.

\begin{corollary}\label{cor:polystellaChow}
For each $i\in \E$, the divisors in the set $\{\widetilde y_j: j\in \pi^{-1}(i)\}$ are all equal to each other as divisor classes in $A^1(X_{\textbf a})$.  Denote this divisor class by $y_i$.
The Chow ring $A^\bullet(X_{\textbf a})$ of $X_\pi$ equals
\[
A^{\bullet}(X_{\pi}) = \frac{\mathbb{Z}[x_S, y_i: \varnothing \subseteq S \subsetneq \E,\ i \in \E]}{\langle x_{S_1} x_{S_2} : S_1, S_2 \text{ incomparable} \rangle + \langle x_S y_i^{a_i} : i \not \in S \rangle + \langle y_i - \sum_{S \not \ni i} x_S : i \in \E \rangle}.
\]
\end{corollary}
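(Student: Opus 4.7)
The plan is to read off both the presentation and the stated equalities among the $\widetilde y_j$ from the Danilov--Jurkiewicz presentation of $A^\bullet(X_\pi)$. Since $\Sigma_\pi$ is unimodular and projective by \Cref{prop:polystellafan}, the variety $X_\pi$ is smooth projective, and
\[
A^\bullet(X_\pi) = \mathbb{Z}[D_\rho \colon \rho \in \Sigma_\pi(1)]/(I_{SR} + I_{\textup{lin}}),
\]
where $I_{SR}$ is the Stanley--Reisner ideal of the simplicial complex of $\Sigma_\pi$ and $I_{\textup{lin}}$ is generated by the relations $\sum_\rho \langle m, v_\rho \rangle D_\rho = 0$ for $m$ in the dual lattice of $\mathbb{Z}^\EE$. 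The primitive ray generators of $\Sigma_\pi$ are $\be_j$ for $j \in \EE$, with associated toric divisor $\widetilde y_j$, together with $-\be_{\EE \setminus \pi^{-1}(F)}$ for $\emptyset \subseteq F \subsetneq \E$, with associated toric divisor $x_F$.

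First I would compute $I_{\textup{lin}}$. Pairing the standard dual basis vector $\be_j^* \in (\mathbb{Z}^\EE)^\vee$ against these ray generators gives the linear relation
\[
\widetilde y_j = \sum_{F \not\ni \pi(j)} x_F
\]
in $A^1(X_\pi)$. The right-hand side depends only on $\pi(j)$, so all $\widetilde y_j$ with $j \in \pi^{-1}(i)$ coincide, proving the first assertion of the corollary; renaming the common class $y_i$, the relation becomes $y_i = \sum_{S \not\ni i} x_S$. Since $\{\be_j^* \colon j \in \EE\}$ is a $\mathbb{Z}$-basis of the dual lattice, these generate all of $I_{\textup{lin}}$.

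Next I would determine $I_{SR}$ by enumerating the minimal non-faces of $\Sigma_\pi$. A subset of rays spans a cone precisely when it arises from a compatible pair $I \le \mathcal{F}$, so there are exactly two minimal obstructions. First, $\mathcal{F}$ may fail to be a chain; the minimal witness is a pair of incomparable proper subsets $S_1, S_2$ of $\E$, giving the relation $x_{S_1} x_{S_2} = 0$. Second, the condition ``$\pi^{-1}(S) \subseteq I \Rightarrow S \subseteq F_1$'' may fail for some $S \subseteq \E$; the minimal offending $S$ is a singleton $\{i\}$ with $i \notin F_1$, so the non-face is $\{\rho_j \colon j \in \pi^{-1}(i)\} \cup \{\rho_F\}$ for each proper subset $F \not\ni i$, producing the relation $\prod_{j \in \pi^{-1}(i)} \widetilde y_j \cdot x_F = 0$. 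Substituting $\widetilde y_j = y_i$ for $j \in \pi^{-1}(i)$ using the linear relations, this becomes $y_i^{a_i} x_F = 0$, which completes the stated presentation.

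The argument is essentially combinatorial bookkeeping once Danilov--Jurkiewicz is invoked, and the only substantive step is the enumeration of minimal non-faces of $\Sigma_\pi$, which is a direct unpacking of the compatible pair condition in \Cref{def:polystellfan}.
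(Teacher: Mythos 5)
Your proof is correct and follows essentially the same approach as the paper: both invoke the Danilov--Jurkiewicz presentation for the smooth projective toric variety $X_\pi$ and enumerate the same minimal non-faces of $\Sigma_\pi$. The only cosmetic difference is that you extract the equality of the $\widetilde y_j$ for $j \in \pi^{-1}(i)$ directly from the relation $\widetilde y_j = \sum_{F \not\ni \pi(j)} x_F$ (whose right-hand side visibly depends only on $\pi(j)$), whereas the paper first pairs against $\be_{j_1} - \be_{j_2}$ to establish the equality and then pairs against $\be_j$; these are the same computation organized slightly differently.
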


\begin{proof}
For a unimodular and projective fan $\Sigma$ in $\RR^\EE$ with rays $\Sigma(1)$ and primitive ray vectors $\{u_\rho \in \ZZ^\EE : \rho \in \Sigma(1)\}$, \cite[\S5.2 Proposition]{Ful93} states that the Chow ring of the smooth projective toric variety $X_\Sigma$ equals
\[
A^\bullet(X_\Sigma) =\frac{\ZZ[x_\rho : \rho \in \Sigma(1)]}{\langle \prod_{\rho\in S} x_\rho : \{\rho_i\}_{i\in S} \text{ do not form a cone in $\Sigma$}\rangle + \langle \sum_{\rho\in \Sigma(1)} (u_\rho, v) x_\rho: v\in \ZZ^\E \rangle}
\]
where $(u,v)$ here denotes the standard inner product on $\RR^\EE$ and $x_\rho$ represents the toric divisor of $X_\Sigma$ corresponding to the ray $\rho$.
We apply this with $\Sigma = \Sigma_\pi$.

Setting $v = \be_{j_1} - \be_{j_2}$ for any $i\in \E$ and $j_1, j_2 \in \pi^{-1}(i)$, the linear relations $\sum_{\rho\in \Sigma(1)} (u_\rho, v) x_\rho = 0$ imply the first statement that $\{\widetilde y_j\}_{j\in \pi^{-1}(i)}$ are all equal as elements in $A^1(X_\pi)$.
Setting $v = \be_j$ for any $i\in \E$ and $j\in \pi^{-1}(i)$ then gives the relations $\{y_i - \sum_{S \not \ni i} x_S  = 0 : i \in \E \}$.
The rest of the corollary follows when one notes that the minimal non-faces of $\Sigma_{\pi}$ are the following:
the sets of the form $\{\rho_{S_1}, \rho_{S_2}\}$ for incomparable proper subsets $S_1$ and $S_2$ of $\E$, or the sets of the form $\{\rho_S\} \cup \{ \rho_j : j \in \pi^{-1}(i)\}$ for a proper subset $S$  of $\E$ and $i \in E\setminus S$.
\end{proof}

\subsection{Nef divisors, deformations, and expansions}\label{ssec:nef}
For a fan $\Sigma$ in $\RR^\EE$, a (lattice) polytope $Q\subset \RR^\EE$ is a \emph{(lattice) deformation} of $\Sigma$ if its inner normal fan $\Sigma_Q$ coarsens the fan $\Sigma$.
We describe the deformations of the polystellahedral fan.

\medskip
As before, let $\pi\colon \EE \to \E$ be a map with cage $\textbf a$.
Define a linear map
\[
p_\pi\colon \RR^\EE \to \RR^\E \quad\text{by}\quad \be_i \mapsto \be_{\pi(i)}.
\]

\begin{definition}
Let $\P = (\E, \rk_\P)$ be a polymatroid on $\E$ with arbitrary cage.  The \textbf{expansion} (with respect to $\pi$) of $\P$ is the polymatroid $\expand(\P)$ on $\EE$ whose rank function is given by $\rk_\P \circ \pi$.  Equivalently, the polymatroid $\expand(\P)$ is defined by setting its independence polytope to be
\[
I(\expand(\P)) = p_\pi^{-1}(I(\P)) \cap \RR^\EE_{\geq 0}.
\]
\end{definition}

\begin{proposition}\label{prop:deformations}
A lattice polytope $Q$ in $\RR^\EE$ is a deformation of $\Sigma_{\textbf{a}}$ if and only if $Q$ is a translate of  $I(\expand(\P))$ for a polymatroid $\P$ on $\E$. 
\end{proposition}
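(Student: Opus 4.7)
The plan is to work with support functions, using the explicit combinatorial description of rays and maximal cones of $\Sigma_\pi$ in \Cref{def:polystellfan}. For a polytope $Q$, write $h_Q(u) = \max_{x \in Q}\langle u, x\rangle$; then $Q$ is a deformation of $\Sigma_\pi$ if and only if the function $u \mapsto \min_{x\in Q}\langle u, x\rangle = -h_Q(-u)$ is linear on every cone of $\Sigma_\pi$ (equivalently, $h_Q$ is linear on every cone of the fan $-\Sigma_\pi$ obtained by reflecting $\Sigma_\pi$ through the origin).

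For $(\Leftarrow)$, given a polymatroid $\P$ on $\E$, I show $Q := I(\expand(\P))$ is a deformation by exhibiting, for each maximal cone $\sigma_{I \leq \mathcal{F}}$ of $\Sigma_\pi$, a single lattice vertex of $Q$ minimizing $\langle u, \cdot\rangle$ for every $u$ in the interior of $\sigma_{I \leq \mathcal{F}}$. A dimension count forces $|F_1| = m - k$, the chain $\mathcal F = \{F_1 \subsetneq \cdots \subsetneq F_k\}$ to be saturated, and $I = \EE \setminus \{j_\ell\}_{\ell = 2}^{k+1}$ for some $j_\ell \in \pi^{-1}(i_\ell)$, where $i_\ell$ is the element of $F_\ell \setminus F_{\ell-1}$ (setting $F_{k+1} = \E$). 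Writing an interior point as $u = \sum_{j \in I}\alpha_j \be_j - \sum_{\ell=1}^k \beta_\ell \be_{\EE \setminus \pi^{-1}(F_\ell)}$ with all $\alpha_j, \beta_\ell > 0$, one computes $u_j - u_{j_\ell} = \alpha_j > 0$ for every $j \in \pi^{-1}(i_\ell) \cap I$. Running the greedy algorithm for $\expand(\P)$ in ascending order of $u_j$, each $j_\ell$ is processed strictly before any other element of $\pi^{-1}(i_\ell)$, so the latter's marginal rank contribution vanishes. This forces the minimizer to be
\[
v_{I \leq \mathcal{F}}[j_\ell] = \rk_\P\bigl(\{i_\ell, \ldots, i_{k+1}\}\bigr) - \rk_\P\bigl(\{i_{\ell+1}, \ldots, i_{k+1}\}\bigr), \qquad v_{I \leq \mathcal{F}}[j] = 0 \text{ for } j \in I,
\]
independent of $u$ in the interior.

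For $(\Rightarrow)$, let $Q$ be a lattice deformation of $\Sigma_\pi$. The maximal cone $\sigma_{\EE \leq \varnothing} = \RR^\EE_{\geq 0}$ admits a unique lattice vertex $v^*$ of $Q$ as minimizer; translating by $-v^*$ assume $v^* = 0$ and $Q \subseteq \RR^\EE_{\geq 0}$. Define $\rk \colon 2^\E \to \ZZ_{\geq 0}$ by $\rk(S) = h_Q(\be_{\pi^{-1}(S)})$. Normalization and monotonicity are immediate. For submodularity, fix $S_1, S_2 \subseteq \E$ with $S_1 \cap S_2 \subsetneq S_1 \cup S_2$ (the case $S_1 = S_2$ is trivial): the chain $\{\E \setminus (S_1 \cup S_2) \subsetneq \E \setminus (S_1 \cap S_2)\}$ of proper subsets of $\E$ (with $\varnothing$ allowed as a proper subset when $S_1 \cup S_2 = \E$) yields a cone of $\Sigma_\pi$ whose reflection contains both $\be_{\pi^{-1}(S_1 \cup S_2)}$ and $\be_{\pi^{-1}(S_1 \cap S_2)}$ as rays (the degenerate case $S_1 \cap S_2 = \varnothing$ is subsumed by $\be_{\pi^{-1}(\varnothing)} = 0$). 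Linearity of $h_Q$ on this cone, combined with its sublinearity and the identity $\be_{\pi^{-1}(S_1)} + \be_{\pi^{-1}(S_2)} = \be_{\pi^{-1}(S_1 \cup S_2)} + \be_{\pi^{-1}(S_1 \cap S_2)}$, gives
\[
\rk(S_1 \cup S_2) + \rk(S_1 \cap S_2) = h_Q\bigl(\be_{\pi^{-1}(S_1)} + \be_{\pi^{-1}(S_2)}\bigr) \leq \rk(S_1) + \rk(S_2).
\]
Thus $\P := (\E, \rk)$ is a polymatroid on $\E$. Finally, the values $h_Q(-\be_j) = 0$ and $h_Q(\be_{\pi^{-1}(T)}) = \rk(T)$ on the rays of $-\Sigma_\pi$ determine $Q$ as $\{x \geq 0 : \sum_{j \in \pi^{-1}(T)} x_j \leq \rk(T) \text{ for all } T \subseteq \E\}$, which is $I(\expand(\P))$. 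The main technical work is the $(\Leftarrow)$ greedy argument, where the strict positivity of $\alpha_j$ throughout the interior of each maximal cone must be leveraged to guarantee the level-wise ordering $u_{j_\ell} < u_j$ for $j \in \pi^{-1}(i_\ell) \cap I$, yielding a unique minimizing vertex.
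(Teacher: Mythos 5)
Your proof is correct, and it takes a genuinely different route from the paper's. The paper factors the argument through toric geometry machinery: it first proves \Cref{lem:nef}, characterizing nef toric divisors $D = \sum a_S x_S$ on $X_{\mathbf a}$ as exactly those for which $S \mapsto a_{E\setminus S}$ is a polymatroid rank function (by applying the minimal non-face criterion of \cite[Theorem 6.4.9]{CLS11}), and then invokes the standard nef-divisor/deformation correspondence \cite[Theorems 6.1.5--6.1.7]{CLS11} to read off the polytope $Q_D$ and identify it with $I(\expand(\P))$. You instead work directly with support functions of polytopes, bypassing the nef divisor formulation entirely. For $(\Rightarrow)$, your argument is a support-function analogue of \Cref{lem:nef}: you extract a rank function from $h_Q$ on the geometric rays and verify submodularity using linearity of $h_Q$ on the reflected two-element-chain cone together with sublinearity --- this is a clean reformulation of the same inequalities the paper derives from non-faces. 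For $(\Leftarrow)$, which the paper absorbs into the CLS correspondence, you give a genuinely new hands-on argument: you explicitly identify the maximal cones of $\Sigma_\pi$ (saturated chains plus one missing fiber element per level) and use the polymatroid greedy algorithm to exhibit a single $u$-independent minimizing lattice vertex on the interior of each one. Your version is more elementary and self-contained (no appeal to the nef criterion or to the deformation/nef correspondence beyond the definition), at the cost of the combinatorial bookkeeping in the greedy step: in particular, the observation that $u_j - u_{j_\ell} = \alpha_j > 0$ forces $j_\ell$ to be processed first within each fiber is exactly the pivot that makes the marginal ranks collapse, and you are right to flag it as the crux. One small point worth spelling out if this were written in full: the vertex claim for the cone $\sigma_{\EE \le \{\E\}}$ in the $(\Rightarrow)$ direction uses that a full-dimensional normal cone can only come from a vertex (not a positive-dimensional face), which holds even when $Q$ is not full-dimensional; this is implicit in your phrasing but deserves a sentence.
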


We deduce the proposition by using a standard result in toric geometry that identifies deformations with nef toric divisors.
We prepare with the following lemma.
Note that, by the linear relations for the Chow ring $A^\bullet(X_{\textbf a})$ in \Cref{cor:polystellaChow}, the set of divisor classes $\{x_S: \emptyset \subseteq S\subsetneq \E\}$ is a basis of $A^1(X_{\textbf{a}})$.

\begin{lemma}\label{lem:nef}
A divisor class $D \in A^1(X_{\textbf{a}})$ is nef if and only if, when we write $D = \sum_{S \subsetneq E} a_{S} x_S$, the function $S \mapsto a_{E \setminus S}$ is the rank function of a polymatroid on $\E$.
\end{lemma}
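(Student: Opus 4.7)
The plan is to apply the standard toric correspondence between nef Cartier divisors on a smooth projective toric variety and polyhedral deformations of its fan. I would first lift the class $D = \sum_{S \subsetneq \E} a_S x_S$ to the $T$-invariant Cartier divisor $\widetilde D$ by assigning coefficient $0$ to each $\widetilde y_j$, and then compute the associated polyhedron using the primitive ray generators ($\be_j$ for $\rho_j$ and $-\be_{\EE\setminus\pi^{-1}(S)}$ for $\rho_S$). Setting $T = \E \setminus S$, this gives
$$P_{\widetilde D} = \Big\{m \in \RR^\EE_{\geq 0} : \textstyle\sum_{k \in \pi^{-1}(T)} m_k \leq a_{\E \setminus T} \text{ for every nonempty } T \subseteq \E\Big\}.$$
Defining $f(\emptyset) := 0$ and $f(T) := a_{\E\setminus T}$ for $T \neq \emptyset$, the claim reduces to showing $P_{\widetilde D}$ is a deformation of $\Sigma_\pi$ exactly when $f$ is a polymatroid rank function on $\E$.

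The sufficiency direction is immediate: if $f = \rk_\P$ for a polymatroid $\P$ on $\E$, the polyhedron $P_{\widetilde D}$ is visibly the independence polytope $I(\expand(\P))$, so \Cref{prop:deformations} (with trivial translate) yields that $P_{\widetilde D}$ is a deformation and hence $D$ is nef.

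For necessity, assume $D$ is nef. By \Cref{prop:deformations}, $P_{\widetilde D} = t + I(\expand(\P'))$ for some polymatroid $\P'$ on $\E$ and some translate $t \in \RR^\EE$. The first task is to pin down $t = 0$: since any $m \in P_{\widetilde D}$ satisfies $0 \leq \sum_{k\in\pi^{-1}(T)} m_k \leq f(T)$, we obtain $f(T) \geq 0$ for all $T$, so the origin lies in $P_{\widetilde D}$. Then $0 \in t + I(\expand(\P'))$ with $I(\expand(\P')) \subseteq \RR^\EE_{\geq 0}$ forces $t \leq 0$, while $t = t+0 \in P_{\widetilde D} \subseteq \RR^\EE_{\geq 0}$ forces $t \geq 0$, giving $P_{\widetilde D} = I(\expand(\P'))$ on the nose. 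The second task is to recover $f$ from the polyhedron: for nef Cartier divisors, the coefficient at each ray is recovered by the support function identity $a_\rho = -\min_{m \in P_{\widetilde D}}\langle m, v_\rho\rangle$, which applied to $\rho = \rho_S$ yields
$$a_S = \max_{m \in P_{\widetilde D}} \sum_{k\in \EE\setminus\pi^{-1}(S)} m_k = \rk_{\expand(\P')}\bigl(\pi^{-1}(\E \setminus S)\bigr) = \rk_{\P'}(\E \setminus S),$$
so $f = \rk_{\P'}$ as functions on $2^\E$.

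The main obstacle is the rigidity step $t = 0$. A priori the polytope could sit in $\RR^\EE_{\geq 0}$ as a nontrivial translate of the independence polytope of an expansion, in which case the coefficients $a_S$ would encode not a rank function but a shifted version of one. The two-sided containment argument above, leveraging the nonnegativity of $f$ to place the origin inside $P_{\widetilde D}$, handles this cleanly and unlocks both directions of the equivalence.
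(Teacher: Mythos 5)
The argument is circular. You invoke \Cref{prop:deformations} in both directions of the equivalence, but in the paper \Cref{prop:deformations} is \emph{deduced from} \Cref{lem:nef}: its proof appeals to the standard correspondence between nef divisors and lattice deformations, and then identifies the resulting polytope $Q_D$ with $I(\expand(\P))$ precisely because \Cref{lem:nef} guarantees that $S \mapsto a_{\E \setminus S}$ is a polymatroid rank function when $D$ is nef. Conversely, to see that every $I(\expand(\P))$ is a deformation, the paper again needs \Cref{lem:nef} to know that the associated divisor is nef. So prop.~\ref{prop:deformations} and lemma~\ref{lem:nef} are logically equivalent (modulo the standard toric correspondence), and using one to prove the other gives no content.

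The real work lies in verifying, combinatorially, when the support function $\varphi_D$ of $D = \sum a_S x_S$ is convex on $\Sigma_{\textbf a}$. This is what the paper does, and it is what your plan silently delegates to \Cref{prop:deformations}: it applies the criterion of \cite[Theorem 6.4.9]{CLS11} that $D$ is nef iff $\varphi_D$ satisfies the appropriate inequality across each minimal non-face of the fan, then checks that the two families of minimal non-faces (pairs $\{\rho_{S_1},\rho_{S_2}\}$ for incomparable $S_1,S_2 \subsetneq \E$, and $\{\rho_S\} \cup \{\rho_j \colon j \in \pi^{-1}(i)\}$ for $i \notin S$) produce exactly the submodularity and monotonicity inequalities of a polymatroid rank function. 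Your polytope computation (the identification of $P_{\widetilde D}$ with the defining inequalities, the support-function recovery $a_S = \max_{m \in P_{\widetilde D}} \sum_{k\in\EE\setminus\pi^{-1}(S)} m_k$, and the translation-rigidity argument) is correct as far as it goes, and would be a valid way to restate the conclusion \emph{once} the underlying convexity computation is done — but it cannot substitute for that computation. To repair the proof, replace the appeal to \Cref{prop:deformations} by a direct verification of the convexity criterion across minimal non-faces, as the paper does.
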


\begin{proof}
Let $\varphi_D$ be the piecewise linear function corresponding to the divisor $D = \sum_{S \subsetneq E} a_S x_S$, which satisfies $\varphi_D(\be_j) = 0$ for all $j \in \EE$ and $\varphi_D(-\be_{\EE \setminus \pi^{-1}(S)}) = -a_S$ for $S \subsetneq E$. We use a criterion for the nefness of a line bundle on a smooth projective toric variety from \cite[Theorem 6.4.9]{CLS11}, which states that $D$ is nef if and only if the support function $\varphi_D$ satisfies an inequality for each minimal non-face of the fan. This gives the following inequalities:
\begin{itemize}
   \item For $S, S' \subsetneq E$ incomparable, the minimal non-face spanned by $\rho_{S}$ and $\rho_{S'}$ gives the inequality
   $$\varphi_D(-\be_{\EE \setminus \pi^{-1}(S)} - \be_{\EE \setminus \pi^{-1}(S')}) \ge \varphi_D(-\be_{\EE \setminus \pi^{-1}(S)}) + \varphi_D(-\be_{\EE \setminus \pi^{-1}(S')}).$$
   Because $-\be_{\EE \setminus \pi^{-1}(S)} - \be_{\EE \setminus \pi^{-1}(S')} = -\be_{\EE \setminus \pi^{-1}(S \cap S')} - \be_{\EE \setminus \pi^{-1}(S \cup S')}$ and $\varphi_D$ is linear on the cone spanned by $-\be_{\EE \setminus \pi^{-1}(S \cap S')}$ and $- \be_{\EE \setminus \pi^{-1}(S \cup S')}$, we get that
   $$a_{S \cap S'} + a_{S \cup S'} \le a_{S} + a_{S'}.$$
   \item For $S \subsetneq E$ and $i \not \in S$, the minimal non-face spanned by $\rho_S \cup \{ \rho_j : j \in \pi^{-1}(i)\}$ gives the inequality
   $$\varphi_D(-\be_{\EE \setminus \pi^{-1}(S)} + \sum_{j \in \pi^{-1}(i)} \be_j) \ge \varphi_D(-\be_{\EE \setminus \pi^{-1}(S)}) + \sum_{j \in \pi^{-1}(i)} \varphi_D(\be_j).$$
   As $-\be_{\EE \setminus \pi^{-1}(S)} + \sum_{j \in \pi^{-1}(i)} \be_j = -\be_{\EE \setminus \pi^{-1}(S \cup i)}$ and $\varphi_D(\be_j) = 0$, this gives the inequality
   $$a_{S \cup i} \le a_{S}.$$
\end{itemize}
These two inequalities are equivalent to the statement that $S \mapsto a_{E \setminus S}$ is a polymatroid. 
\end{proof}

\begin{proof}[Proof of \Cref{prop:deformations}]
The standard correspondence between nef toric divisors and deformations \cite[Theorems 6.1.5--6.1.7]{CLS11}, when applied to the fan $\Sigma_{\textbf a}$, states that
a nef divisor $D = \sum_{S \subsetneq E} a_S x_S$ on $X_{\textbf a}$ corresponds to the lattice deformation $Q_D$ of $\Sigma_{\textbf a}$ defined by
\[
Q_D = \{y\in \RR^\EE: (y, \be_j ) \ge 0 \text{ for all $j\in \EE$ and }( y, -\be_{\EE \setminus \pi^{-1}(S)} ) \ge -a_S \text{ for all $\emptyset\subseteq S \subsetneq \E$}\},
\]
which is exactly the independence polytope of the expansion of the polymatroid with rank function $S \mapsto a_{\E \setminus S}$.
Moreover, the correspondence implies that every lattice deformation of $\Sigma_{\textbf a}$ arises as a translate of the polytope corresponding to a nef divisor $D = \sum_{S \subsetneq E} a_S x_S$.
\end{proof}

We distinguish the following set of nef divisors on $X_{\textbf a}$ arising from the standard simplices in $\RR^\E$.  Note that, for each nonempty subset $S\subseteq \E$, the simplex $\Delta_S^0 = \operatorname{conv}(\{0\} \cup \{\be_i: i\in S\}) \subset \RR^\E$ is the independence polytope of the polymatroid on $\E$ whose rank function is
\[
\rk(T) = \begin{cases}
1 &\text{if $T\cap S \neq\emptyset$}\\
0 &\text{otherwise}
\end{cases} \qquad\text{for $\emptyset\subseteq T \subseteq \E$,}
\]
or equivalently, $\rk(\E\setminus T) = 1$ exactly when $T\not\supseteq S$.

\begin{definition}
For each nonempty subset $S\subseteq \E$, we define $h_S \in A^1(X_{\textbf a})$ to be the nef divisor
\[
h_S = \sum_{\substack{\emptyset\subseteq T\subsetneq \E \\ T\not\supseteq S}}x_T 
\]
corresponding to the simplex $\Delta_S^0$.
We call the divisor classes $\{h_S: \emptyset\subsetneq S \subseteq \E\}$ the \emph{simplicial generators} of $X_{\textbf a}$.
\end{definition}

\begin{proposition}\label{prop:simpgen}
The simplicial generators of $X_{\textbf a}$ form a basis of $A^1(X_{\textbf a})$. In particular, their monomials span $A^\bullet(X_{\textbf a})$ as an abelian group.
\end{proposition}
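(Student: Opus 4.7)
The plan is to invert the defining sum $h_S = \sum_{T \not\supseteq S,\, T \subsetneq \E} x_T$ by Möbius inversion on the Boolean lattice $2^{\E}$, and then conclude by matching $\ZZ$-ranks.

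First I would establish that $\{x_T : \emptyset \subseteq T \subsetneq \E\}$ is a $\ZZ$-basis of $A^1(X_{\textbf{a}})$. By the presentation of Corollary~\ref{cor:polystellaChow}, the linear relations $y_i = \sum_{S \not\ni i} x_S$ express each $y_i$ in terms of the $x_T$, so these $2^m - 1$ classes generate $A^1$. On the other hand, since $X_\pi$ is smooth and projective and the polystellahedral fan has $|\EE| + (2^m - 1) = n + (2^m - 1)$ rays in $\RR^{\EE}$, the rank of $A^1(X_\pi)$ equals $2^m - 1$, forcing $\{x_T\}$ to be a basis.

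Next I would express each $x_T$ as an integer combination of $\{h_S\}_{S \neq \emptyset}$. Observe that $h_{\E} = \sum_{T \subsetneq \E} x_T$, and for $\emptyset \neq S \subseteq \E$ one has $h_S = h_{\E} - \sum_{T \subsetneq \E,\, T \supseteq S} x_T$. Setting $\widetilde h_S := h_{\E} - h_S$ for $S \neq \emptyset$ and $\widetilde h_{\emptyset} := h_{\E}$ produces the uniform identity $\widetilde h_S = \sum_{T \subsetneq \E,\, T \supseteq S} x_T$ for every $S \subseteq \E$. Möbius inversion on $2^{\E}$ then yields
$$x_T \;=\; \sum_{T \subseteq S \subseteq \E}(-1)^{|S\setminus T|}\, \widetilde h_S \qquad \text{for every } T \subsetneq \E.$$
For such $T$ the alternating sum $\sum_{S \supseteq T,\, S \subseteq \E}(-1)^{|S\setminus T|} = \sum_{R \subseteq \E\setminus T}(-1)^{|R|}$ vanishes because $\E\setminus T \neq \emptyset$, so all the $h_{\E}$ contributions cancel and we are left with
$$x_T \;=\; \sum_{\substack{T \subseteq S \subseteq \E \\ S \neq \emptyset}} (-1)^{|S\setminus T|+1}\, h_S,$$
an integer combination of simplicial generators. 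Hence the $2^m - 1$ classes $\{h_S : \emptyset \subsetneq S \subseteq \E\}$ span $A^1(X_{\textbf{a}})$, and matching ranks forces them to be a $\ZZ$-basis.

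The second assertion is then formal: Corollary~\ref{cor:polystellaChow} presents $A^\bullet(X_{\textbf{a}})$ as a quotient of a polynomial ring whose variables all lie in degree one, so monomials in any $\ZZ$-basis of $A^1$ span $A^\bullet(X_{\textbf{a}})$ as an abelian group. There is no real obstacle in this argument; the only step that needs care is the cancellation of the $h_{\E}$ terms after Möbius inversion, which reduces to the standard vanishing $\sum_{R \subseteq \E\setminus T}(-1)^{|R|}=0$ whenever $\E\setminus T \neq \emptyset$.
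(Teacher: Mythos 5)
Your proof is correct and follows the same approach as the paper: the paper also notes (just above the proposition) that $\{x_T \colon \emptyset \subseteq T \subsetneq \E\}$ is a basis of $A^1(X_{\textbf a})$ and then invokes Möbius inversion to conclude. Your write-up simply makes explicit the details that the paper leaves to the reader — the rank count $|\Sigma_\pi(1)| - n = 2^m - 1$, the auxiliary classes $\widetilde h_S = h_{\E} - h_S$, and the cancellation of the $h_{\E}$ terms — but the underlying argument is identical.
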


\begin{proof}
By M\"obius inversion, every divisor class in the basis $\{x_T: \emptyset\subseteq T \subsetneq \E\}$ of $A^1(X_{\textbf a})$ is a linear combination of the simplicial generators.
\end{proof}

\begin{remark}
The definition of $h_S$ here agrees with its definition in the introduction as the pullback of the hyperplane class of $\PP(\bigoplus_{i\in S} V_i \oplus \kk)$ along the map
$$ \textstyle X_{\textbf{a}} \hookrightarrow \prod_{\emptyset \subsetneq S \subseteq \E} \mathbb{P}(\bigoplus_{i \in S} V_i\oplus \kk) \to \mathbb{P}( \bigoplus_{i \in S} V_i \oplus \kk).$$
To see this, one notes that the independence polytope of the expansion of the polymatroid of $\Delta_S^0$ is the simplex $\Delta_{\pi^{-1}(S)}^0 = \operatorname{conv}(\{0\} \cup \{\be_j : j\in \pi^{-1}(S)\}) \subset \RR^\EE$.
The lattice points of $\Delta_{\pi^{-1}(S)}^0$, considered as global sections of the corresponding line bundle, induce the map $X_{\textbf a} \to \PP(\bigoplus_{i\in S} V_i \oplus \kk)$.
\end{remark}

We conclude by discussing the behavior of Chow rings under refinements.
\Cref{prop:polystellafan} implies that $\Sigma_\pi$ is a coarsening of the stellahedral fan $\Sigma_\EE$.  Thus, we have a toric birational map
\[
u\colon X_\EE \to X_{\textbf a} \quad\text{induced by the refinement of fans}\quad \Sigma_\EE \succeq \Sigma_\pi.
\]
We record the following properties of $u$ for future use.

\begin{lemma}\label{lem:pullback}
The pullback map $u^*\colon A^\bullet(X_{\textbf a}) \to A^\bullet(X_\EE)$ satisfies the following.
\begin{enumerate}
\item $u^*$ is a split injection, with the splitting given by the pushforward map $u_*\colon A^\bullet(X_\EE) \to A^\bullet(X_{\textbf a})$.
\item If $D\in A^1(X_{\textbf a})$ is a nef divisor class corresponding to a deformation $Q$ of $\Sigma_{\textbf a}$, then the pullback $u^*D \in A^1(X_\EE)$ is a nef divisor class corresponding to $Q$ considered as a deformation of $\Sigma_\EE$.
\item For a nonempty subset $S\subseteq \E$, the simplicial generator $h_S \in A^1(X_{\textbf a})$ pulls back to the simplicial generator $u^* h_S = h_{\pi^{-1}(S)} \in A^1(X_\EE)$.
\end{enumerate}
\end{lemma}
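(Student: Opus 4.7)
The plan is to dispatch the three parts in order, using standard toric geometry throughout. Part (1) will follow from the projection formula for the proper birational morphism $u$, while parts (2) and (3) will rest on the dictionary between nef divisors on a smooth projective toric variety and convex piecewise linear support functions on its fan, combined with the fact that pullback along a toric morphism induced by a refinement is the identity on support functions.

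For (1), since $u$ comes from a refinement of fans with common support, it is proper and birational, so $u_*1 = 1$ in $A^\bullet(X_{\textbf a})$. The projection formula then gives $u_*(u^*\alpha) = \alpha \cdot u_*1 = \alpha$ for every $\alpha$, proving that $u_*$ splits $u^*$. For (2), write $\varphi_D$ for the support function on $\Sigma_\pi$ representing $D$. Because $\Sigma_\EE$ refines $\Sigma_\pi$, the function $\varphi_D$ is equally well-defined as a piecewise linear function on $\Sigma_\EE$, and is the support function of $u^*D$. The nefness of $D$ is equivalent to the convexity of $\varphi_D$, per the criterion used in the proof of \Cref{lem:nef}, and this convexity is manifestly preserved upon passing to the refinement; thus $u^*D$ is nef. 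Finally, the deformation polytope $Q$ is cut out by the inequalities $\langle y, v\rangle \ge \varphi_D(v)$ ranging over primitive ray generators $v$, and these inequalities depend only on $\varphi_D$ as a function on $\RR^\EE$, not on which fan structure cuts $\RR^\EE$ into linearity regions. Hence the polytope associated to $u^*D$ on $X_\EE$ coincides with $Q$.

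Part (3) is then immediate from (2). By definition, $h_S \in A^1(X_{\textbf a})$ corresponds to the deformation $I(\expand(\P))$ where $\P$ is the rank-one polymatroid on $\E$ with $I(\P) = \Delta_S^0$, and the remark preceding the lemma identifies this expansion as the simplex $\Delta_{\pi^{-1}(S)}^0 \subset \RR^\EE$. Applying (2), $u^*h_S$ is the nef divisor on $X_\EE$ associated to the same polytope $\Delta_{\pi^{-1}(S)}^0$, which by the definition of simplicial generator for the stellahedral fan is precisely $h_{\pi^{-1}(S)}$.

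I do not anticipate any substantive obstacle; the lemma is an unpacking of standard toric geometry. The single place requiring care is in tracking the sign conventions relating a nef Cartier divisor to its deformation polytope, but these conventions are already fixed in the proof of \Cref{prop:deformations}.
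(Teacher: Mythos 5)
Your proposal is correct and takes essentially the same approach as the paper: part (1) via the projection formula for the proper birational $u$, part (2) via the fact that pullback along a refinement of fans preserves the support function and hence the associated polytope (the content of \cite[Proposition 6.2.7]{CLS11}, which the paper cites directly), and part (3) by specializing part (2) to the simplex $\Delta_{\pi^{-1}(S)}^0$. The only difference is that you unpack the support-function argument that the paper delegates to a citation.
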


\begin{proof}
The first statement is a standard consequence of the birationality of $u$ and the projection formula.  The second statement follows from \cite[Proposition 6.2.7]{CLS11}.  The third statement follows from the second, since the independence polytope of the expansion of the polymatroid of $\Delta_S^0$ is the simplex $\Delta_{\pi^{-1}(S)}^0 = \operatorname{conv}(\{0\} \cup \{\be_j : j\in \pi^{-1}(S)\}) \subset \RR^\EE$.
\end{proof}

\begin{remark}
Let the \emph{polystellahedron} with cage $\textbf a$ be the polytope $\Pi_{\textbf a}$ in $\RR^\EE$ defined by
\begin{align*}
\Pi_{\textbf a} = I(\expand(\P)), \text{ where $\P$ is the polymatroid on $\E$ with } \text{$B(\P) = \operatorname{conv}\{w\cdot(1,\ldots,m) : w\in \mathfrak S_\E\}$}.
\end{align*}
The face $B(\expand(\P))$ of $\Pi_{\textbf a}$ was introduced as the \emph{polypermutohedron} with cage $\textbf a$ in \cite{CHLSW}.
Using the results in this subsection, one can verify that the polystellahedral fan $\Sigma_{\textbf a}$ is the normal fan of the polystellahedron $\Pi_{\textbf a}$.  Alternatively, using the building set associated to a polystellahedral fan given in the proof of \Cref{prop:polystellafan}, one can verify that $\Pi_{\textbf a}$ is the corresponding nestohedron \cite[Section 7]{Pos09}.
\end{remark}

\section{Augmented geometry of polymatroids}

For a polymatroid $\P$ with cage $\textbf a$ and rank $r$, we define its {augmented Bergman fan} $\Sigma_\P$ as a subfan of the polystellahedral fan with cage $\textbf a$, and we use its properties to define the augmented Bergman class $[\Sigma_\P] \in A_r(X_{\textbf a})$.
We then record some geometric properties of the augmented Bergman fan and the augmented Bergman class.

\subsection{Multisymmetric lifts and duality}

We begin with a construction of a matroid from a polymatroid $\P$ with cage $\textbf{a}$ which has appeared many times in the literature \cite{H72,L77,M75,N86,BCF} under different names, such as the ``free expansion'' and ``natural matroid.'' Here, we use the terminology of \cite{CHLSW}.

\begin{definition}
The \emph{multisymmetric lift} of a polymatroid $\P$ on $\E$ with cage \textbf{a} is the matroid $\lift(\P)$ on $\EE$ whose rank function is given by
\[
\rk_{\lift(\P)}(S) = \min\{ \rk_\P(A) + |S\setminus \pi^{-1}(A)| : A\subseteq \E\}.
\]
\end{definition}

Alternatively, the multisymmetric lift can be described via polytopes as follows.

\begin{lemma}\label{lem:projection}
Let $[0,1]^\EE$ be the unit cube in $\RR^\EE$.  Then, we have
$I(\lift(\P)) = I(\expand(\P)) \cap [0,1]^\EE$.
\end{lemma}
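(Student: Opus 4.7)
The plan is to prove both inclusions directly by chasing the defining inequalities of the independence polytopes on each side, using the explicit formula for $\rk_{\lift(\P)}$.

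For the inclusion $I(\lift(\P)) \subseteq I(\expand(\P)) \cap [0,1]^\EE$, suppose $x \in I(\lift(\P))$. Taking $S = \{i\}$ and $A = \emptyset$ in the definition of $\rk_{\lift(\P)}$ gives $\rk_{\lift(\P)}(\{i\}) \le 1$, so $x_i \le 1$ for every $i \in \EE$, which yields $x \in [0,1]^\EE$. Next, for any $S \subseteq \E$, I would apply the defining inequality of $I(\lift(\P))$ to the set $\pi^{-1}(S) \subseteq \EE$, and take $A = S$ in the minimum defining $\rk_{\lift(\P)}(\pi^{-1}(S))$; this gives
\[
\sum_{i\in \pi^{-1}(S)} x_i \;\le\; \rk_{\lift(\P)}(\pi^{-1}(S)) \;\le\; \rk_\P(S).
\]
Because $\sum_{i\in \pi^{-1}(S)} x_i = \sum_{j\in S}(p_\pi(x))_j$, these are precisely the independence-polytope inequalities showing $p_\pi(x) \in I(\P)$, so $x \in p_\pi^{-1}(I(\P)) \cap \RR^\EE_{\ge 0} = I(\expand(\P))$.

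For the reverse inclusion, suppose $x \in I(\expand(\P)) \cap [0,1]^\EE$. I need to verify that $\sum_{i\in S} x_i \le \rk_\P(A) + |S\setminus \pi^{-1}(A)|$ for every $S \subseteq \EE$ and every $A \subseteq \E$; taking the minimum over $A$ then gives $x \in I(\lift(\P))$. The verification is the direct split
\[
\sum_{i\in S} x_i \;=\; \sum_{i\in S\cap \pi^{-1}(A)} x_i + \sum_{i\in S\setminus \pi^{-1}(A)} x_i \;\le\; \sum_{i\in \pi^{-1}(A)} x_i + |S\setminus \pi^{-1}(A)|,
\]
using $x_i \le 1$ on the second sum and enlarging the index set on the first. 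The first sum then equals $\sum_{j\in A}(p_\pi(x))_j$, which is bounded by $\rk_\P(A)$ because $p_\pi(x) \in I(\P)$.

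There is no real obstacle here; the statement is essentially a bookkeeping exercise that pairs the hypothesis $x \le \mathbf 1$ with the $|S \setminus \pi^{-1}(A)|$ term, and the hypothesis $p_\pi(x) \in I(\P)$ with the $\rk_\P(A)$ term. The only thing to be careful about is that the $\min$ in the definition of $\rk_{\lift(\P)}$ plays dual roles in the two directions: in the forward direction a single well-chosen $A$ extracts the needed inequality, while in the backward direction one must verify the bound for \emph{every} $A$.
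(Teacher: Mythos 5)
Your proof is correct, and it takes a genuinely more direct route than the paper's. The paper reduces to 0-1 vectors: it shows that a subset $S \subseteq \EE$ is independent in $\lift(\P)$ if and only if $\be_S \in I(\expand(\P))$, and then concludes equality of the two polytopes. That conclusion implicitly uses that both $I(\lift(\P))$ and $I(\expand(\P)) \cap [0,1]^\EE$ are determined by their 0-1 lattice points --- which is standard (a matroid independence polytope is a 0-1 polytope, and intersecting an integral polymatroid polytope with the unit cube remains integral), but is a fact that needs to be invoked. Your argument sidesteps this entirely by checking the defining inequalities for an arbitrary point $x$: in the forward direction you extract from the minimum defining $\rk_{\lift(\P)}$ the two substitutions $A = \emptyset$ (giving $x \le \mathbf{1}$) and $A = S$ (giving $p_\pi(x) \in I(\P)$), and in the backward direction you verify the bound for every $A$ by the split $\sum_{i\in S} x_i \le \sum_{i\in\pi^{-1}(A)} x_i + |S\setminus\pi^{-1}(A)|$. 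This buys a more self-contained proof at the cost of a bit more inequality bookkeeping; the paper's version is shorter but leans on a piece of polymatroid folklore. Both are sound.
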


\begin{proof}
We need to show that a subset $S \subseteq \EE$ is independent in the matroid $\lift(\P)$ if and only if $\be_S \in I(\expand(\P))$.
By the definition of $I(\expand(\P))$, we have that $\be_S \in I(\expand(\P))$ if and only if, for all $U \subseteq \EE$, one has $|S \cap U| \le \operatorname{rk}_\P(\pi(U))$.
It suffices to check whether this holds when $U$ is a fiber of $\pi$, so this condition becomes $|S \cap \pi^{-1}(A)| \le \operatorname{rk}_\P(A)$, or, equivalently, $\rk_\P(A) + |S\setminus \pi^{-1}(A)| = \rk_\P(A) + |S| - |S\cap \pi^{-1}(A)| \geq |S|$, for all $A \subseteq E$.
That is, the condition is equivalent to $\rk_{\lift(\P)}(S) = |S|$.
\end{proof}

The lemma and its proof implies that $I(\lift(\P))$ maps onto $I(\P)$ under the linear projection $p_\pi\colon \RR^\EE \to \RR^\E$.
Equivalently, a polymatroid $\P$ with cage $\textbf a$ is recovered from $\lift(\P)$ via the formula $\operatorname{rk}_\P(S) = \operatorname{rk}_{\lift(\P)}(\pi^{-1}(S))$.

\medskip
When $\P$ is realized by a subspace arrangement $L \subseteq \bigoplus_{i\in \E}V_i$, the multisymmetric lift $\lift(\P)$ is realized by the hyperplane arrangement $L \subseteq \kk^\EE$ obtained by a general choice of isomorphisms $V_i \simeq \kk^{\pi^{-1}(i)}$ for all $i\in \E$.
In particular, the subspaces $\{L_i: i\in \E\}$ in the arrangement appear as subspaces arising as intersections of the hyperplanes in the arrangement $L\subseteq \kk^\EE$.

\smallskip
For a polymatroid $\P = (\E, \rk_\P)$ with cage \textbf{a}, a subset $F \subseteq \E$ is a \emph{flat} of $\P$ if $\operatorname{rk}_\P(F \cup a) > \operatorname{rk}_\P(F)$ for all $a \in  \E\setminus F$.
The flats of $\P$ form a lattice, denoted $\mathcal{L}_\P$. The \emph{loops} of a polymatroid are the elements of the minimal flat. 
We say that a polymatroid is \emph{loopless} if the empty set is a flat, or equivalently, if $\operatorname{rk}_\P(i) > 0$ for all $i \in \E$. 
Given a flat $F$ of $\P$, the subset $\pi^{-1}(F)\subseteq \EE$ is a flat of the multisymmetric lift $\lift(\P)$.
Flats of $\lift(\P)$ of this form are called \emph{geometric flats} of $\lift(\P)$.
The key property of geometric flats is the following. 

\begin{proposition}\cite[Lemma 2.8]{CHLSW}\label{lem:georank}
Every flat $F$ of $\lift(\P)$ contains a unique maximal geometric flat $F^{\mathrm{geo}}$. We have that $\operatorname{rk}_{\lift(\P)}(F^{\mathrm{geo}}) = \operatorname{rk}_{\operatorname{P}}(\pi(F^{\mathrm{geo}}))$, and 
$\operatorname{rk}_{\lift(\P)}(F) = \operatorname{rk}_{\lift(\P)}(F^{\mathrm{geo}}) + |F \setminus F^{\mathrm{geo}}|.$
\end{proposition}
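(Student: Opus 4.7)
The plan is to define $F^{\mathrm{geo}}$ explicitly as the union of $\pi$-fibers inside $F$, verify it is a geometric flat that is maximal in $F$, and then derive both rank identities by pinning down the optimizer in the defining formula for $\rk_{\lift(\P)}$.

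Set $A := \{i \in \E : \pi^{-1}(i) \subseteq F\}$ and $F^{\mathrm{geo}} := \pi^{-1}(A)$; visibly this is the largest subset of $F$ that is a union of $\pi$-fibers. To show $A$ is a flat of $\P$ (hence $F^{\mathrm{geo}}$ is a geometric flat), first observe that for every $A' \subseteq \E$, the submodular bound $\rk_\P(A') \le \rk_\P(B) + |\pi^{-1}(A' \setminus B)|$ (via $\rk_\P(A') \le \rk_\P(A' \cup B) \le \rk_\P(B) + \rk_\P(A' \setminus B)$ plus the trivial bound $\rk_\P(A' \setminus B) \le |\pi^{-1}(A' \setminus B)|$) forces $\rk_{\lift(\P)}(\pi^{-1}(A')) = \rk_\P(A')$ with the minimum attained at $B = A'$. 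The same computation yields
\[
\rk_{\lift(\P)}\bigl(F^{\mathrm{geo}} \cup \{j\}\bigr) \;=\; \min\bigl(\rk_\P(A \cup \{\pi(j)\}),\; \rk_\P(A) + 1\bigr) \quad\text{for } j \notin F^{\mathrm{geo}}.
\]
If $A$ were not a flat, pick $i \in \E \setminus A$ with $\rk_\P(A \cup \{i\}) = \rk_\P(A)$ and a witness $j \in \pi^{-1}(i) \setminus F$; the formula gives $\rk_{\lift(\P)}(F^{\mathrm{geo}} \cup \{j\}) = \rk_\P(A) = \rk_{\lift(\P)}(F^{\mathrm{geo}})$, placing $j$ in the closure of $F^{\mathrm{geo}} \subseteq F$, and thus in the closure of $F$, which equals $F$—contradicting $j \notin F$. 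Uniqueness and maximality are then immediate: any geometric flat $\pi^{-1}(B) \subseteq F$ forces $B \subseteq A$, hence $\pi^{-1}(B) \subseteq F^{\mathrm{geo}}$.

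For the second rank identity, I would determine the optimizer $B^{*}$ in $\rk_{\lift(\P)}(F) = \min_{B}\{\rk_\P(B) + |F \setminus \pi^{-1}(B)|\}$. If some $i \in B^{*}$ lay outside $A$, then $\pi^{-1}(i) \not\subseteq F$; picking $j' \in \pi^{-1}(i) \setminus F$ and substituting $B = B^{*}$ into the formula for $\rk_{\lift(\P)}(F \cup \{j'\})$ would yield $\rk_{\lift(\P)}(F \cup \{j'\}) \le \rk_{\lift(\P)}(F)$, contradicting that $F$ is a flat. Hence $B^{*} \subseteq A$, and for any $B \subseteq A$ a direct calculation shows $\rk_\P(B) + |F \setminus \pi^{-1}(B)| - (\rk_\P(A) + |F \setminus \pi^{-1}(A)|) = \rk_\P(B) - \rk_\P(A) + |\pi^{-1}(A \setminus B)| \ge 0$ by the submodular bound above. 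Thus $B^{*} = A$ is optimal, giving $\rk_{\lift(\P)}(F) = \rk_\P(A) + |F \setminus F^{\mathrm{geo}}| = \rk_{\lift(\P)}(F^{\mathrm{geo}}) + |F \setminus F^{\mathrm{geo}}|$.

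The first identity $\rk_{\lift(\P)}(F^{\mathrm{geo}}) = \rk_\P(\pi(F))$ then reduces to the equality $\rk_\P(A) = \rk_\P(\pi(F))$: although $A \subseteq \pi(F)$ in general, the flat structure of $F$ in $\lift(\P)$ must prevent any rank increase when passing from $A$ to $\pi(F)$. This is the main obstacle; the cleanest way forward is to run the same optimizer-chasing argument as above on the polymatroid side, showing that an element $i \in \pi(F) \setminus A$ causing $\rk_\P(A \cup \{i\}) > \rk_\P(A)$ would translate, via the choice of any $j \in F \cap \pi^{-1}(i)$ and a witness outside $F$, into a violation of the closure relation $\operatorname{cl}_{\lift(\P)}(F) = F$—the same mechanism used to establish that $B^{*} \subseteq A$.
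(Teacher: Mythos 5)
Your argument correctly establishes most of the statement: you construct $F^{\mathrm{geo}}=\pi^{-1}(A)$ with $A=\{i\colon\pi^{-1}(i)\subseteq F\}$, verify that $A$ is a flat of $\P$ (using the clean formula $\operatorname{rk}_{\lift(\P)}(F^{\mathrm{geo}}\cup\{j\})=\min(\operatorname{rk}_\P(A\cup\{\pi(j)\}),\operatorname{rk}_\P(A)+1)$), deduce uniqueness and maximality, and prove $\operatorname{rk}_{\lift(\P)}(F)=\operatorname{rk}_{\lift(\P)}(F^{\mathrm{geo}})+|F\setminus F^{\mathrm{geo}}|$ by showing the defining minimum for $\operatorname{rk}_{\lift(\P)}(F)$ is attained at $B=A$. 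All of those steps check out, and they are exactly the parts of this lemma that the rest of the paper relies on.

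The obstacle you flag at the end, though, is not a gap you can close: the equality $\operatorname{rk}_{\lift(\P)}(F^{\mathrm{geo}})=\operatorname{rk}_\P(\pi(F))$ is false as printed. Take $\E=\{1,2\}$, $a_1=a_2=2$, and $\operatorname{rk}_\P(\{1\})=\operatorname{rk}_\P(\{2\})=\operatorname{rk}_\P(\E)=2$, so $\lift(\P)$ is the uniform matroid $U_{2,4}$ on $\EE=\{1a,1b,2a,2b\}$. Then $F=\{1a\}$ is a flat of $\lift(\P)$ with $F^{\mathrm{geo}}=\emptyset$, giving $\operatorname{rk}_{\lift(\P)}(F^{\mathrm{geo}})=0$, while $\operatorname{rk}_\P(\pi(F))=\operatorname{rk}_\P(\{1\})=2$. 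The optimizer-chasing strategy you sketch in the last paragraph therefore must fail---and in this example you can check it does, since $\operatorname{cl}_{\lift(\P)}(\{1a\})=\{1a\}$ and no contradiction materializes. The intended statement, and the one actually used later (e.g.\ in the proof of Lemma~\ref{lem:isbuilding}), is $\operatorname{rk}_{\lift(\P)}(F^{\mathrm{geo}})=\operatorname{rk}_\P(\pi(F^{\mathrm{geo}}))$, and you have already proven that: your opening observation $\operatorname{rk}_{\lift(\P)}(\pi^{-1}(A'))=\operatorname{rk}_\P(A')$ for all $A'\subseteq\E$, applied with $A'=A=\pi(F^{\mathrm{geo}})$, gives it immediately. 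So modulo this typo in the stated proposition, your proof is complete; I would stop trying to bridge from $A$ to $\pi(F)$.
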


\begin{remark}
As in \Cref{rem:gamma1}, let $\Gamma_{\textbf a}$ be the product $\prod_{i\in E} \mathfrak S_{\pi^{-1}(i)}$ of permutation groups.  The terminology ``multisymmetric'' is justified by the fact that 
the obvious action of the group $\Gamma_{\textbf a}$ on $\EE$ preserves the rank function of $\lift(\P)$. In fact, this property characterizes multisymmetric lifts: \cite[Theorem 2.9]{CHLSW} states that a matroid $\lift$ on $\EE$ such that the action of $\Gamma_{\textbf a}$ preserves the rank function is of the form $\lift(\P)$ for a polymatroid $\P$ with cage $\textbf{a}$.\footnote{In the proof of this theorem, the authors of \cite{CHLSW} make the additional assumption that $\operatorname{rk}_\P(i) = a_i$, but this assumption is never used.}
Moreover, the map $F \mapsto \pi^{-1}(F)$ induces an isomorphism from the lattice $\mathcal{L}_{\operatorname{P}}$ of flats of $\P$ to the lattice of $\Gamma_{\textbf a}$-fixed flats of $\lift(\P)$ \cite[Corollary 2.7]{CHLSW}. 
\end{remark}
\noindent
We now discuss polymatroid duality, see, e.g., \cite{M75}. Our main conclusion is that taking multisymmetric lift commutes with polymatroid duality.

\begin{definition}
For a polymatroid $\P$ on $\E$ with cage $\textbf a$ and rank $r$, its \emph{dual polymatroid} $\P^\perp$ is a polymatroid on $\E$ with cage $\textbf a$ and rank $n-r$ whose rank function is
\[
\operatorname{rk}_{\P^\perp}(S) = \sum_{i\in S} a_i + \operatorname{rk}(E\setminus S) - r.
\]
\end{definition}

Alternatively, duality can also be described via polytopes as follows.  
The rank function description for $\P^\perp$ above implies that
\[
B(\P^\perp) = -B(\P) + \textbf a,
\]
or, equivalently, since $I(\P) = \{ x\in \prod_{i\in \E} [0,a_i]: y-x \in \RR_{\geq 0}^\E \text{ for some $y\in B(\P)$}\}$, we have
\[
-I(\P^\perp) + \textbf a = \{ x\in  \textstyle \prod_{i\in \E} [0,a_i] : x - y \in \RR_{\geq 0}^\E\text{ for some $y\in B(\P)$}\}.
\]

\begin{figure}[h]
\centering
\includegraphics[height=42mm]{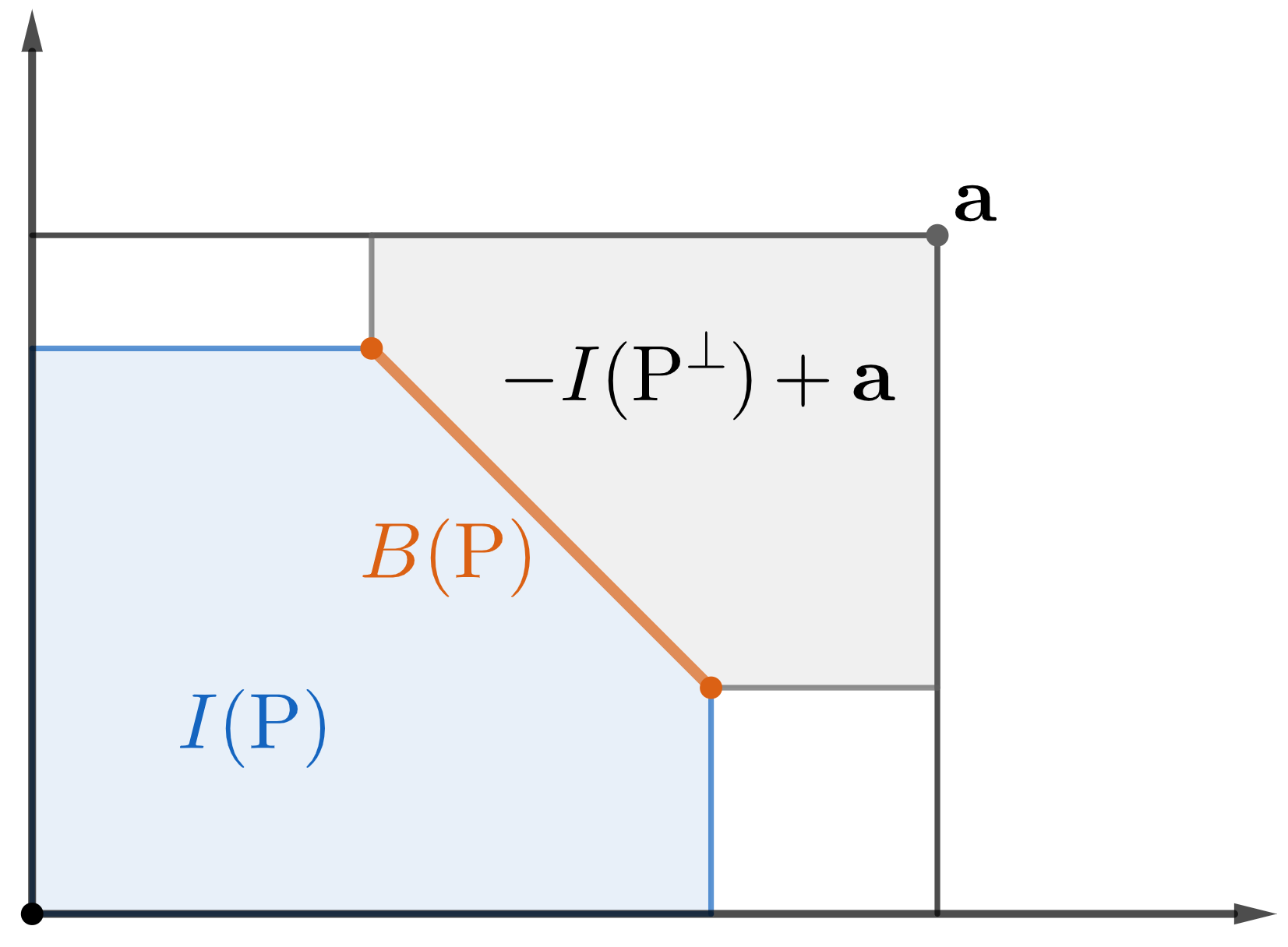}
\caption{Polytopes associated to a polymatroid and its dual.}
\end{figure}

When $\P$ is realized by $L\subseteq V = \bigoplus_{i\in \E} V_i$, its dual $\P^\perp$ is realized by $(V/L)^\vee \subseteq \bigoplus_{i\in \E} V_i^\vee$ obtained by dualizing the surjection $V \twoheadrightarrow V/L$.
When $\textbf a = (1, \ldots, 1)$, polymatroid duality agrees with the usual notion of matroid duality.

\begin{proposition}\label{prop:dual}
For a polymatroid $\P$ on $\E$ with cage $\textbf a$, one has $\lift(\P^\perp) = \lift(\P)^\perp$.
\end{proposition}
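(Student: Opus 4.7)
The plan is to verify the two matroids on $\EE$ have the same rank function by direct comparison on an arbitrary subset $T \subseteq \EE$. Unraveling the definition of the multisymmetric lift and then substituting the dual rank formula,
\[
\rk_{\lift(\P^\perp)}(T) = \min_{A \subseteq \E}\Bigl\{\sum_{i\in A}a_i + \rk_\P(\E\setminus A) - r + |T \setminus \pi^{-1}(A)|\Bigr\}.
\]
Reindex by $B = \E \setminus A$ and use the complementation identities $\sum_{i \notin B} a_i = n - |\pi^{-1}(B)|$ and $|T \setminus \pi^{-1}(\E\setminus B)| = |T \cap \pi^{-1}(B)|$ to rewrite the expression as
\[
\min_{B \subseteq \E}\Bigl\{n - |\pi^{-1}(B)| + \rk_\P(B) - r + |T \cap \pi^{-1}(B)|\Bigr\}. \tag{$\ast$}
\]

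On the other side, by matroid duality,
\[
\rk_{\lift(\P)^\perp}(T) = |T| + \rk_{\lift(\P)}(\EE \setminus T) - \rk_{\lift(\P)}(\EE).
\]
Expand $\rk_{\lift(\P)}(\EE \setminus T) = \min_{A \subseteq \E}\{\rk_\P(A) + |(\EE\setminus T)\setminus \pi^{-1}(A)|\}$ and use $|(\EE\setminus T)\setminus \pi^{-1}(A)| = n - |T| - |\pi^{-1}(A)| + |T \cap \pi^{-1}(A)|$. After the $|T|$ terms cancel, the expression agrees with $(\ast)$ provided one knows that $\rk_{\lift(\P)}(\EE) = r$.

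This last identity is the only step that uses the polymatroid axioms nontrivially: the choice $A = \E$ in the multisymmetric lift formula achieves $r$, and for any other $A \subseteq \E$ I would iterate submodularity of $\rk_\P$ and the type bound $\rk_\P(\{i\}) \leq a_i$ to obtain $\rk_\P(\E) - \rk_\P(A) \leq \sum_{i \in \E\setminus A} a_i$, whence $\rk_\P(A) + |\EE \setminus \pi^{-1}(A)| \geq r$.

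I expect no serious obstacles; the proof is essentially a bookkeeping exercise in complementation, and the only non-tautological content is the computation $\rk_{\lift(\P)}(\EE) = r$, where submodularity and the type condition of the polymatroid enter.
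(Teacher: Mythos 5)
Your proof is correct, but it takes a genuinely different route from the paper's. The paper deduces the statement in one line from its polytope descriptions: Lemma~\ref{lem:projection} gives $I(\lift(\P)) = I(\expand(\P)) \cap [0,1]^\EE$, and combining this with the identities $B(\P^\perp) = -B(\P) + \textbf{a}$, $B(\M^\perp) = -B(\M) + \be_\EE$, and $p_\pi(\be_\EE) = \textbf{a}$ shows the two matroids have the same base polytope (duality acts as reflection through $\tfrac{1}{2}\be_\EE$ on the cube, which $p_\pi$ carries to reflection through $\tfrac{1}{2}\textbf{a}$ on the box). Your argument instead works directly with the rank-function formula for $\lift$ and the two dual-rank formulas, and matches them via the complementation bookkeeping you describe; the only substantive ingredient is the identity $\rk_{\lift(\P)}(\EE) = r$, which you correctly establish from submodularity and the type bound. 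What the paper's route buys is brevity and reuse of already-proven polytope machinery; what your route buys is self-containment and an explicit rank-level verification that doesn't presuppose the polytope dictionary. (Incidentally, the identity $\rk_{\lift(\P)}(\EE) = r$ is also noted in the paper just after Lemma~\ref{lem:projection}, as the $S = \E$ case of $\rk_\P(S) = \rk_{\lift(\P)}(\pi^{-1}(S))$, so you could cite that rather than rederive it.)
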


\begin{proof}
This follows from \Cref{lem:projection} since $B(\P^\perp) = -B(\P) + \textbf a$ and $p_\pi(\sum_{j\in \EE}\be_j) = \textbf a$.
\end{proof}

\subsection{Augmented Bergman fans of polymatroids}\label{ssec:augbergman}

Let $\P$ be a polymatroid on $\E$ with cage $\textbf{a}$. We now introduce the {augmented Bergman fan} $\Sigma_\P$ of a polymatroid.

\begin{definition}
The \emph{augmented Bergman fan} $\Sigma_\P$ of $\P$ is the subfan of $\Sigma_{\textbf{a}}$ consisting of cones $\sigma_{S \le \mathcal{F}}$, where $S$ is a subset of $\EE$ and $\mathcal{F} = \{F_1 \subsetneq \dotsb \subsetneq F_k \subsetneq F_{k+1} = E\}$ is a chain of proper flats of $\P$ satisfying
\begin{enumerate}
\item For all $T \subseteq S$, one has $\operatorname{rk}_{\P}(\pi(T)) \ge |T|$, and
\item for all $F \in \mathcal{F}$ and all nonempty $T \subseteq S \setminus \pi^{-1}(F)$, one has $\operatorname{rk}_{\P}(F \cup \pi(T)) > \operatorname{rk}_{\P}(F) + |T|$. 
\end{enumerate}
\end{definition}

When $\textbf a = (1, \dotsc, 1)$, that is, when $\P$ is a matroid $\M$ on $\E$, the augmented Bergman fan of $\P$ coincides with the augmented Bergman fan $\Sigma_\M$ introduced in \cite{BHMPW22}.  Explicitly, the fan $\Sigma_\M$ is the subfan of the stellahedral fan $\Sigma_\E$ consisting of cones $\sigma_{I\leq \mathcal F}$ where $I\subseteq \E$ is an independent set of $\M$ and $\mathcal F = \{F_1 \subsetneq \cdots \subsetneq F_k \subsetneq F_{k+1}= \E\}$ is a chain of proper flats of $\M$ such that $I\subseteq F_1$.

\begin{theorem}\label{thm:augbergmanfan}
The augmented Bergman fan $\Sigma_\P$ of $\P$ is the subfan of $\Sigma_{\textbf a}$ whose support is equal to the support of the augmented Bergman fan $\Sigma_{\lift(\P)}$ of the multisymmetric lift of $\P$.  More precisely, $\Sigma_\P$ is the coarsening of the fan $\Sigma_{\lift(\P)}$ such that it is a subfan of $\Sigma_{\textbf a}$.
\end{theorem}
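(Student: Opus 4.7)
The strategy is to leverage that, by \Cref{prop:polystellafan}, the fan $\Sigma_\EE$ refines $\Sigma_{\textbf{a}}$, so each cone of $\Sigma_\EE$ lies in a unique cone of $\Sigma_{\textbf{a}}$. Since $\Sigma_{\lift(\P)}$ is a subfan of $\Sigma_\EE$ and $\Sigma_\P$ is (by definition) a subfan of $\Sigma_{\textbf{a}}$, the theorem reduces to the combinatorial claim that a cone $\sigma_{S \le \mathcal F} \in \Sigma_{\textbf{a}}$ belongs to $\Sigma_\P$ if and only if every $\Sigma_\EE$-cone contained in it belongs to $\Sigma_{\lift(\P)}$.

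First I would classify the $\Sigma_\EE$-cones contained in a given $\sigma_{S \le \mathcal F}\in\Sigma_{\textbf{a}}$. By expressing a ray $-\be_{\EE \setminus G}$ of $\Sigma_\EE$ as a nonnegative combination of the rays of $\sigma_{S \le \mathcal F}$, the compatibility built into \Cref{def:polystellfan} forces exactly one of the rays $-\be_{\EE \setminus \pi^{-1}(F_j)}$ to receive a positive coefficient, yielding $G = \pi^{-1}(F) \cup T$ for some $F \in \mathcal F$ and some $T \subseteq S \setminus \pi^{-1}(F)$. Consequently, $\sigma_{I \le \mathcal G} \subseteq \sigma_{S \le \mathcal F}$ if and only if $I \subseteq S$ and each $G_j$ admits a decomposition $G_j = \pi^{-1}(F_{\alpha(j)}) \cup T_j$ for some weakly increasing $\alpha\colon\{1,\dotsc,\ell\}\to\{1,\dotsc,k\}$ and $T_j \subseteq S \setminus \pi^{-1}(F_{\alpha(j)})$.

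Next I would translate the defining conditions of the two fans through this dictionary. Independence of $I \subseteq S$ in $\lift(\P)$ is equivalent, via the formula $\rk_{\lift(\P)}(T') = \min_{A\subseteq\E}(\rk_\P(A) + |T' \setminus \pi^{-1}(A)|)$, to the inequalities in condition~(1). For the flat property of $G_j = \pi^{-1}(F_{\alpha(j)}) \cup T_j$ in $\lift(\P)$, \Cref{lem:georank} identifies the geometric part $G_j^{\mathrm{geo}} = \pi^{-1}(F_{\alpha(j)})$; testing the strict inequality $\rk_{\lift(\P)}(G_j \cup e) > \rk_{\lift(\P)}(G_j)$ for $e \notin G_j$ by analyzing the minimum over $A \subseteq \E$ then reduces precisely to the inequalities $\rk_\P(F_{\alpha(j)} \cup \pi(T')) > \rk_\P(F_{\alpha(j)}) + |T'|$ for nonempty $T' \subseteq S \setminus \pi^{-1}(F_{\alpha(j)})$, which is exactly condition~(2). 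Closure of $\Sigma_\P$ under faces is immediate, since conditions~(1), (2) are inherited by subsets $S' \subseteq S$ and subchains $\mathcal F' \subseteq \mathcal F$.

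Granted the resulting support equality $|\Sigma_\P| = |\Sigma_{\lift(\P)}|$, the coarsening statement follows because each cone of $\Sigma_\P$ is a union of $\Sigma_\EE$-cones contained in it, all of which then lie in $\Sigma_{\lift(\P)}$. The main obstacle I expect is the verification that condition~(2) is genuinely equivalent to the flat property of $G_j$ in $\lift(\P)$: the minimum defining $\rk_{\lift(\P)}(G_j \cup e)$ may be attained at various $A \subseteq \E$, so one must use submodularity and monotonicity of $\rk_\P$ to reduce to the case $A \supseteq F_{\alpha(j)}$ where condition~(2) applies directly.
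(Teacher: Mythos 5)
Your proposal is correct but takes a genuinely different route from the paper. The paper works through the free coextension $\lift(\P)\times 0$ and the machinery of building sets: it shows in \Cref{lem:isbuilding} and the following lemma that $\Sigma_\P$ coincides with the nested set fan $\underline{\Sigma}_{\lift(\P)\times 0,\mathcal{G}}$ for the building set $\mathcal{G}$ of ``geometric'' flats, then invokes the building-set-independence of supports from \cite[Theorem 4]{FY04} together with the identification of $\Sigma_{\lift(\P)}$ with $\underline{\Sigma}_{\lift(\P)\times 0,\mathcal{H}}$ (the full building set) from \cite[Lemma 5.14]{EHL}. Your approach instead works at the level of the refinement $\Sigma_\EE\succeq\Sigma_{\textbf a}$ directly: you reduce the theorem to the combinatorial equivalence that $\sigma_{S\le\mathcal F}\in\Sigma_\P$ iff every $\Sigma_\EE$-cone inside it lies in $\Sigma_{\lift(\P)}$, then classify those subcones and translate conditions (1) and (2) into independence/flat conditions for $\lift(\P)$ via the rank formula and \Cref{lem:georank}. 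Your route is more elementary and avoids the external building-set results, at the cost of more hands-on bookkeeping: the classification of $\Sigma_\EE$-cones inside $\sigma_{S\le\mathcal F}$ needs a careful coefficient analysis (the ``exactly one ray $-\be_{\EE\setminus\pi^{-1}(F_j)}$ gets a positive coefficient'' claim must be justified, and the monotone index map $\alpha$ constrained so the $G_j$ form a chain), and the equivalence of condition (2) with the flat property indeed requires the submodularity argument you flag — this is roughly the content of the paper's auxiliary \Cref{lem:findgeometric}. One small gain of the paper's method is that identifying $\Sigma_\P$ with a nested set fan automatically gives that it is a fan (closed under faces, simplicial, unimodular); in your approach the closure-under-faces verification is easy as you note, but unimodularity and being balanced would require the same appeal to Bergman fan structure anyway, so this difference is cosmetic.
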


This is the key property of $\Sigma_\P$ that we will repeatedly use.
The rest of this subsection is dedicated to the proof of the theorem.

\medskip
We now review building sets on the lattice of flats of a matroid; for proofs and details we point to \cite{dCP95, FS05}.
A \emph{building set} on a loopless matroid $\M$ on ground set $\E$ is a collection $\mathcal{G}$ of nonempty flats of $\M$ such that, for all nonempty flats of $F$ of $\M$, the natural map of lattices
$$\prod_{G \in \max \mathcal{G}_{\le F}} [\emptyset, G] \to [\emptyset, F]$$
is an isomorphism.
Here, $\max\mathcal G_{\leq F}$ denotes the maximal elements of $\mathcal G$ contained in the interval $[\emptyset, F] \subseteq \mathcal L_\M$.
All building sets that we consider will contain the maximal flat $E$.
A \emph{nested set} is a subset $\mathcal{N} \subseteq \mathcal{G}$ that does not contain $E$ such that, for all pairwise incomparable subsets $\{F_1, \dotsc, F_k\} \subseteq \mathcal{N}$ with $k \ge 2$, the join $\bigvee_{i=1}^k F_i$ of $\{F_1, \dotsc, F_k\}$ is not in $\mathcal{G}$.
Nested sets form a simplicial complex, which is realized as a simplicial fan $\underline{\Sigma}_{\M, \mathcal{G}}$ in $\RR^\E/\RR\be_\E$ whose cones are $\{\text{image in $\RR^\E/\RR \be_\E$ of }\operatorname{cone}\{\be_i: i\in \mathcal N\} \subset \RR^\E: \mathcal N \text{ a nested set}\}$.
We call $\underline{\Sigma}_{\M,\mathcal G}$ the \emph{Bergman fan} of $\M$ with respect to the building set $\mathcal G$.
The support of $\underline{\Sigma}_{\M, \mathcal{G}}$ does not depend on the choice of building set \cite[Theorem 4]{FY04}, and $\underline{\Sigma}_{\M, \mathcal{G}}$ is always a unimodular fan \cite[Proposition 2]{FY04}. 

\medskip
We prove \Cref{thm:augbergmanfan} by identifying the fan $\Sigma_\P$ with a Bergman fan of a matroid closely related to the multisymmetric lift $\lift(\P)$.
Let $\lift(\P) \times 0$ denote the \emph{free coextension} of the multisymmetric lift $\lift(\P)$, which is a matroid on the ground set $\EE \sqcup \{0\}$ with flats
$$\{F \cup 0  : F \subseteq \EE \text{ flat of }\lift(\P)\} \cup \{I \subseteq \EE : I \text{ independent in }\lift(\P)\}.$$
Note that $\lift(\P) \times 0$ is always loopless. We now define a building set on $\lift(\P) \times 0$ whose Bergman fan will be the augmented Bergman fan of $\P$. 

\begin{lemma}\label{lem:isbuilding}
Let $\mathcal{G}$ be the set of all flats of $\lift(\P) \times 0$ of the form $F \cup 0$ for $F$ a geometric flat of $\lift(\P)$, or $\{j\}$ for $j \in \EE$ not a loop of $\lift(\P)$. Then $\mathcal{G}$ is a building set.
\end{lemma}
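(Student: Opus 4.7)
The plan is to use the standard reformulation of the building set condition for geometric lattices: a set $\{G_1, \dotsc, G_k\}$ of flats in the interval $[\emptyset, F]$ induces the join map isomorphism $\prod_i [\emptyset, G_i] \overset{\sim}{\to} [\emptyset, F]$ if and only if $\bigvee_i G_i = F$ and $\sum_i \rk(G_i) = \rk(F)$; this is equivalent to the matroid restriction splitting as the direct sum of the restrictions to the $G_i$. So for each flat $F$ of $\lift(\P) \times 0$, I only need to identify $\max \mathcal{G}_{\le F}$ and verify these two numerical conditions.

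Recalling that the flats of $\lift(\P) \times 0$ come in two families --- independent subsets $I \subseteq \EE$ of $\lift(\P)$, or sets $F_0 \cup \{0\}$ for $F_0$ a flat of $\lift(\P)$ --- the proof is a short case analysis. For $F = I$ independent, since $0 \notin I$ only the singletons $\{j\}$ with $j \in I$ lie in $\mathcal{G}_{\le F}$ (each such $j$ is automatically a non-loop of $\lift(\P)$). These are all maximal, their join is $I$, and the ranks sum to $|I| = \rk(I)$.

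For $F = F_0 \cup \{0\}$, I would invoke \Cref{lem:georank} to pick out the unique maximal geometric flat $F_0^{\mathrm{geo}} \subseteq F_0$; then $F_0^{\mathrm{geo}} \cup \{0\}$ is the unique maximal element of $\mathcal{G}$ of the first type contained in $F$. The other maximal elements of $\mathcal{G}_{\le F}$ are singletons $\{j\}$ with $j \in F_0 \setminus F_0^{\mathrm{geo}}$ (each such $j$ is a non-loop of $\lift(\P)$ because the set of loops is itself geometric, hence contained in $F_0^{\mathrm{geo}}$). The join of these maximal elements has underlying set $F_0 \cup \{0\}$, which is already a flat, so the join equals $F$. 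Using \Cref{lem:georank}, the ranks add as
\[
(\rk_{\lift(\P)}(F_0^{\mathrm{geo}}) + 1) + |F_0 \setminus F_0^{\mathrm{geo}}| = \rk_{\lift(\P)}(F_0) + 1 = \rk(F).
\]

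The main subtlety is getting the identification of $\max \mathcal{G}_{\le F}$ right in the second case, where one must carefully check that singletons $\{j\} \in \mathcal{G}$ with $j \in F_0^{\mathrm{geo}}$ are absorbed by $F_0^{\mathrm{geo}} \cup \{0\}$ and that all other $j$ are non-loops. Once this is done, \Cref{lem:georank} is precisely the tool that makes rank additivity work --- it says that the ``non-geometric'' elements $F_0 \setminus F_0^{\mathrm{geo}}$ are coloop-like relative to $F_0^{\mathrm{geo}}$ inside $F_0$ --- and the product decomposition criterion for geometric lattices then closes the argument.
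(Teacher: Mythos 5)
Your case analysis tracks the paper's own argument closely (both cases, both reducing the second to \Cref{lem:georank}), and the identification of $\max \mathcal{G}_{\le F}$ in each case is correct, including the nontrivial observation that elements of $F_0 \setminus F_0^{\mathrm{geo}}$ are non-loops because the loop set is itself a geometric flat contained in $F_0^{\mathrm{geo}}$.

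However, the ``standard reformulation'' you lead with is stated too strongly. In a geometric lattice, the conditions $\bigvee_i G_i = F$ and $\sum_i \rk(G_i) = \rk(F)$ do \emph{not} by themselves imply that the join map $\prod_i [\hat 0, G_i] \to [\hat 0, F]$ is an isomorphism, nor that $M|_F$ is the direct sum $\bigoplus_i M|_{G_i}$. A counterexample: in $U_{2,3}$ on $\{1,2,3\}$, take $G_1 = \{1\}$, $G_2 = \{2\}$, $F = \{1,2,3\}$. Then $G_1 \vee G_2 = F$ and $\rk(G_1) + \rk(G_2) = 2 = \rk(F)$, but the product of intervals has $4$ elements while $[\hat 0, F]$ has $5$, and the atom $\{3\}$ is not in the image of the join map. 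The missing hypothesis is a coverage condition: every atom of $[\hat 0, F]$ must lie below some $G_i$ (equivalently, for loopless $M|_F$, the underlying sets of the $G_i$ must cover $F$). With coverage plus rank additivity one does get the direct sum decomposition and hence the lattice isomorphism.

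In your application this extra hypothesis holds, as your own join computations show: in the independent case $\bigcup_j \{j\} = I$, and in the flat case $(F_0^{\mathrm{geo}} \cup \{0\}) \cup \bigcup_j \{j\} = F_0 \cup \{0\}$. More structurally, $\mathcal{G}$ contains every atom of the lattice of flats of $\lift(\P) \times 0$ --- the singletons $\{j\}$ for $j$ a non-loop, and the minimal flat (loops of $\lift(\P)$) $\cup \{0\}$, the latter being of the form (geometric flat) $\cup\{0\}$ --- which forces $\max \mathcal{G}_{\le F}$ to cover $F$ for every $F$. If you state the criterion with this coverage hypothesis (or note that $\mathcal{G}$ contains all atoms, which makes it automatic), your proof is correct and matches the paper's in substance.
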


\begin{proof}
Consider a flat of $\lift(\P) \times 0$ of the form $H \cup 0$ for $H$ a flat of $\lift(\P)$. By Lemma~\ref{lem:georank}, $H$ contains a unique maximal geometric flat $H^{\mathrm{geo}}$, and, for any subset $S$ with $H^{\operatorname{geo}} \subseteq S \subseteq H$, we have that $\operatorname{rk}_{\operatorname{P}}(S) = \operatorname{rk}_{\operatorname{P}}(H^{\operatorname{geo}}) + |S \setminus H^{\operatorname{geo}}|$. This identifies the interval $[\emptyset, H \cup 0]$ in $\mathcal{L}_{\lift(\P) \times 0}$ with $[\emptyset, H^{\operatorname{geo}} \cup 0] \times [\emptyset, H \setminus H^{\operatorname{geo}}]$. The second factor splits as $[\emptyset, H \setminus H^{\operatorname{geo}}] = \prod_{i \in H \setminus H^{\operatorname{geo}}} [\emptyset, i]$, which gives the desired decomposition for $H \cup 0$. 
If we have a flat of $\lift(\P) \times 0$ of the form $I$ for $I \subseteq \EE$ independent, then the desired decomposition is automatic. 
\end{proof}

Before computing the nested sets of $\mathcal{G}$, we need a preparatory lemma. 

\begin{lemma}\label{lem:findgeometric}
Let $F$ be a geometric flat of a multisymmetric matroid $\lift(\P)$, and let $S$ be a nonempty subset of $F$ such that $|S| \ge \operatorname{rk}_{\lift(\P)}(F)$ or $|S| > \operatorname{rk}_{\lift(\P)}(F)$. Then there is a geometric flat $G$ of $\lift(\P)$ and a nonempty subset $S' \subseteq S \cap G$ such that $|S'| \ge \operatorname{rk}_{\lift(\P)}(G)$ (respectively $|S'| > \operatorname{rk}_{\lift(\P)}(G)$) and $S'$ spans $G$. 
\end{lemma}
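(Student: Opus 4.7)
The plan is to take $G$ to be the unique maximal geometric flat contained in the $\lift(\P)$-closure $T := \operatorname{cl}_{\lift(\P)}(S)$, as supplied by Proposition~\ref{lem:georank}, and then to set $S' := S \cap G$. The identity $\rk_{\lift(\P)}(T) = \rk_{\lift(\P)}(G) + |T \setminus G|$ from that proposition is the engine of both parts of the argument.

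I would first verify the size inequality. Writing $|S'| = |S| - |S \setminus G|$, the inclusion $S \setminus G \subseteq T \setminus G$ (which holds because $S \subseteq T$) gives $|S \setminus G| \le |T \setminus G| = \rk_{\lift(\P)}(T) - \rk_{\lift(\P)}(G)$. Since $F$ is a flat containing $S$, we have $T \subseteq F$ and hence $\rk_{\lift(\P)}(T) \le \rk_{\lift(\P)}(F)$. Substituting and using the hypothesis yields
\[ |S'| \ge |S| - \rk_{\lift(\P)}(F) + \rk_{\lift(\P)}(G) \ge \rk_{\lift(\P)}(G), \]
and the strict version follows by the same computation from $|S| > \rk_{\lift(\P)}(F)$.

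Next I would show that $S'$ spans $G$, arguing by contradiction. If $H := \operatorname{cl}_{\lift(\P)}(S')$ is a proper sub-flat of $G$, then $\rk_{\lift(\P)}(H) < \rk_{\lift(\P)}(G)$. Since $S = S' \sqcup (S \setminus G)$, we have $T = \operatorname{cl}_{\lift(\P)}(H \cup (S \setminus G))$, so the standard subadditivity $\rk(X \cup Y) \le \rk(X) + |Y|$ combined with the bound $|S \setminus G| \le \rk_{\lift(\P)}(T) - \rk_{\lift(\P)}(G)$ from the previous paragraph gives
\[ \rk_{\lift(\P)}(T) \le \rk_{\lift(\P)}(H) + \rk_{\lift(\P)}(T) - \rk_{\lift(\P)}(G), \]
i.e., $\rk_{\lift(\P)}(G) \le \rk_{\lift(\P)}(H)$, contradicting $H \subsetneq G$.

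I do not anticipate a serious obstacle: the whole proof is a short chain of rank inequalities. The one conceptual choice is to take $G = T^{\mathrm{geo}}$ rather than, say, the smallest geometric flat containing $S$; the latter can fail to be spanned by $S \cap G$, whereas Proposition~\ref{lem:georank} is tailored precisely to force the spanning condition for $T^{\mathrm{geo}}$.
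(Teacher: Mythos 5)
Your proof is correct, and it takes a genuinely different route from the paper's. The paper's argument runs by induction on $\operatorname{rk}_{\lift(\P)}(F)$: it takes $H = \operatorname{cl}_{\lift(\P)}(S)$, uses Proposition~\ref{lem:georank} to derive $\operatorname{rk}_{\lift(\P)}(H^{\mathrm{geo}}) < |S \cap H^{\mathrm{geo}}|$, and then either lands on $H^{\mathrm{geo}} = F$ or recurses with $(H^{\mathrm{geo}}, S \cap H^{\mathrm{geo}})$; the spanning condition only surfaces at the base of the recursion. You instead close the argument in one step by proving directly that $S' = S \cap T^{\mathrm{geo}}$ already spans $T^{\mathrm{geo}}$, using the identity $\operatorname{rk}_{\lift(\P)}(T) = \operatorname{rk}_{\lift(\P)}(T^{\mathrm{geo}}) + |T \setminus T^{\mathrm{geo}}|$ together with subadditivity of rank and the trivial inclusion $S \setminus G \subseteq T \setminus G$. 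This buys something concrete: your proof shows that the recursion implicit in the paper's argument always terminates after a single step (i.e., $T^{\mathrm{geo}}$ is always a valid choice of $G$, with $S' = S \cap T^{\mathrm{geo}}$), whereas the paper only establishes existence of some suitable $G$, possibly much smaller. Both arguments hinge on the same rank identity from Proposition~\ref{lem:georank}; yours simply extracts more from it and avoids the induction.
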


\begin{proof}
We first do the case when $|S| > \operatorname{rk}_{\lift(\P)}(F)$. We induct on the rank of $F$; if $\operatorname{rk}_{\lift(\P)}(F) = 0$ then the claim is obvious. Let $H$ be the closure of $S$. Using Lemma~\ref{lem:georank}, we have that
$$\operatorname{rk}_{\lift(\P)}(H) = \operatorname{rk}_{\lift(\P)}(H^{\mathrm{geo}}) + |H \setminus H^{\mathrm{geo}}| \ge \operatorname{rk}_{\lift(\P)}(H^{\mathrm{geo}}) + |S| - |S \cap H^{\mathrm{geo}}|.$$
On the other hand, we have that $\operatorname{rk}_{\lift(\P)}(H) \le \operatorname{rk}_{\lift(\P)}(F) < |S|$, so $\operatorname{rk}_{\lift(\P)}(H^{\mathrm{geo}}) < |S \cap H^{\mathrm{geo}}|$. Either $H^{\mathrm{geo}} = F$ and we are done, or we conclude by induction.  

In the case when $|S| \ge \operatorname{rk}_{\lift(\P)}(F)$, if we set $H$ to be the closure of $S$ then the argument above shows that $|S \cap H^{\operatorname{geo}}| \ge \operatorname{rk}_{\lift(\P)}(H^{\mathrm{geo}})$, so we are done unless $\operatorname{rk}_{\lift(\P)}(H^{\mathrm{geo}}) = 0$ (when $S \cap H^{\operatorname{geo}}$ may be empty). In this case, we have that $\operatorname{rk}_{\lift(\P)}(H) \ge |S| \ge \operatorname{rk}_{\lift(\P)}(F)$ by Lemma~\ref{lem:georank}, so $H = F$ is geometric. 
\end{proof}

\begin{lemma}\label{lem:nestedisaug}
With $\mathcal{G}$ as in Lemma~\ref{lem:isbuilding}, the nested sets of $\mathcal{G}$ are given by chains of flats $\mathcal{F} = \{F_1 \subsetneq \dotsb \subsetneq F_k \subsetneq F_{k+1}= \E\}$ of $\P$ and a subset $S$ of the non-loops of $\lift(\P)$ such that:
\begin{enumerate}
\item For all $T \subseteq S$, $\operatorname{rk}_{{\P}}(\pi(T)) \ge |T|$, and
\item for all $F \in \mathcal{F}$ and all nonempty $T \subseteq S \setminus \pi^{-1}(F)$, $\operatorname{rk}_{\operatorname{P}}(F \cup \pi(T)) > \operatorname{rk}_{\operatorname{P}}(F) + |T|$. 
\end{enumerate}
\end{lemma}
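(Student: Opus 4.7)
The plan is to translate the nested-set condition into the two stated conditions on a decomposition of $\mathcal{N}$. Every element of $\mathcal{G}$ is of exactly one of two types: a singleton $\{j\}$ for $j$ a non-loop of $\lift(\P)$, or $\pi^{-1}(F) \cup \{0\}$ for $F$ a flat of $\P$. A subset $\mathcal{N} \subseteq \mathcal{G}$ not containing the top $\EE \cup \{0\}$ is therefore encoded by a collection $\mathcal{F}$ of proper flats of $\P$ together with a subset $S \subseteq \EE$ of non-loops of $\lift(\P)$. The nested condition must be verified on antichains of size $\geq 2$ of three kinds: all geometric, all singleton, or mixed.

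For an antichain of two geometric elements $\pi^{-1}(F_1) \cup \{0\}$ and $\pi^{-1}(F_2) \cup \{0\}$, the join in $\lift(\P) \times 0$ is $\pi^{-1}(F_1 \vee F_2) \cup \{0\}$, which again lies in $\mathcal{G}$ since the geometric flats form a sublattice of $\mathcal{L}_{\lift(\P)}$ via the isomorphism $\mathcal{L}_\P \cong \mathcal{L}_{\lift(\P)}^{\Gamma_{\textbf a}}$. The nested condition therefore forces $\mathcal{F}$ to be totally ordered. For an antichain of singletons indexed by $T \subseteq S$ with $|T| \geq 2$, the join is $T$ itself when $T$ is independent in $\lift(\P)$ (not in $\mathcal{G}$ since $|T| \geq 2$) and $\operatorname{cl}_{\lift(\P)}(T) \cup \{0\}$ when $T$ is dependent (in $\mathcal{G}$ exactly when $\operatorname{cl}_{\lift(\P)}(T)$ is geometric). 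By the argument in the proof of Lemma~\ref{lem:projection}, condition (1) is equivalent to $S$ being independent in $\lift(\P)$; and if $S$ is dependent, Lemma~\ref{lem:findgeometric} applied to the geometric flat $\pi^{-1}(\overline{\pi(T)})$ for a failing $T$ produces a dependent $T' \subseteq T$ whose closure is geometric, violating the nested condition. Hence the nested condition on singleton antichains is exactly condition (1).

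The mixed case --- one geometric element $\pi^{-1}(F) \cup \{0\}$ with $F \in \mathcal{F}$ together with singletons indexed by some nonempty $T \subseteq S \setminus \pi^{-1}(F)$ --- is the main obstacle. The join is $\operatorname{cl}_{\lift(\P)}(\pi^{-1}(F) \cup T) \cup \{0\}$; noting that $\pi^{-1}(\overline{F \cup \pi(T)})$ is the smallest geometric flat containing $\pi^{-1}(F) \cup T$, this join lies in $\mathcal{G}$ exactly when $\rk_{\lift(\P)}(\pi^{-1}(F) \cup T) = \rk_\P(F \cup \pi(T))$. Assuming condition (1), submodularity gives $\rk_{\lift(\P)}(\pi^{-1}(F) \cup T) \leq \rk_\P(F) + |T|$, and then condition (2) rules out the above equality, establishing one direction. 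For the converse, if condition (2) fails for some $(F, T)$, I would identify $\lift(\P)/\pi^{-1}(F) \cong \lift(\P/F)$ by direct comparison of their rank functions and then apply Lemma~\ref{lem:findgeometric} to $T$ inside the geometric flat of $\lift(\P/F)$ corresponding to $\overline{F \cup \pi(T)} \setminus F$, producing a nonempty $T' \subseteq T$ spanning some geometric flat; lifting back gives $\operatorname{cl}_{\lift(\P)}(\pi^{-1}(F) \cup T') = \pi^{-1}(G'')$ geometric, contradicting the nested condition. Nonemptiness of $T'$ follows because $T \neq \emptyset$ and $T \cap \pi^{-1}(F) = \emptyset$ force $\overline{F \cup \pi(T)} \supsetneq F$, so the relevant geometric flat in the contraction is nontrivial.
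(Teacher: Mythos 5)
Your proof is correct and follows essentially the same route as the paper's: identify $\mathcal{N}$ with a pair $(S,\mathcal{F})$, verify nestedness antichain-by-antichain, use the sublattice property of geometric flats to reduce $\mathcal{F}$ to a chain, and invoke Lemma~\ref{lem:findgeometric} (and its application to the contraction $\lift(\P)/\pi^{-1}(F)$) for the two remaining cases. One small presentational gain in your version is the observation that condition (1) is precisely the statement that $S$ is independent in $\lift(\P)$, which makes the forward verification for all-singleton antichains immediate; you also make explicit the identification $\lift(\P)/\pi^{-1}(F)\cong\lift(\P/F)$ and the nonemptiness of $T'$, both of which the paper leaves implicit. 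These are improvements in exposition rather than a different argument.
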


\begin{proof}
Let $S$ and $\mathcal{F} = \{F_1 \subsetneq \dotsb \subsetneq F_k \subsetneq F_{k+1} =  E\}$ be a pair satisfying the two condition of the lemma. We check that the corresponding set is nested. The incomparable subsets are either given by a collection $T \subseteq S$, or a flat $F \in \mathcal{F}$ and $T \subseteq S \setminus \pi^{-1}(F)$. 

The closure of $T \subseteq S$ in $\lift(\P) \times 0$ is $T$ if $T$ is independent, and it is $\operatorname{cl}_{\lift(\P)}(T) \cup 0$ if $T$ is dependent. In the first case, $T$ is not in $\mathcal{G}$ if $|T| > 1$. If $T$ is dependent, then (1) guarantees that $\operatorname{rk}_{\lift(\P)}(T) < \operatorname{rk}_\P(\pi(T))$, so the closure is not in $\mathcal{G}$. Similarly, if we have $T \subseteq S \setminus \pi^{-1}(F)$, then the closure of $\pi^{-1}(F) \cup T$ cannot be geometric. 

Now let $\mathcal{N}$ be a nested set, which consists of a subset $S$ of the non-loops of $\lift(\P)$ and flats of the form $\pi^{-1}(F) \cup 0$ for $F$ a flat of $\P$. As the join of two geometric flats is a geometric flat, the flats of $\P$ such that $\pi^{-1}(F) \cup 0$ lies in $\mathcal{N}$ must form a chain $\mathcal{F}$. 

Suppose there is a nonempty subset $T \subseteq S$ with $\operatorname{rk}_{\operatorname{P}}(\pi(T)) < |T|$. Let $F = \operatorname{cl}_\P(\pi(T))$ be the closure of $\pi(T)$, which is a flat of $\P$ of rank less than $|T|$ with $\pi^{-1}(F)$ containing $T$. By Lemma~\ref{lem:findgeometric}, there is $T' \subseteq T$ and a geometric flat $G$ such that $T'$ spans $G$ and $|T'| > \operatorname{rk}_{\lift(\P)}(G)$. Then the closure of $T'$ in $\lift(\P) \times 0$ is $G \cup 0$, contradicting that $\mathcal{N}$ is nested. 

Now suppose that there is $F \in \mathcal{F}$ and $T \subseteq S \setminus \pi^{-1}(F)$ with $\operatorname{rk}_{\operatorname{P}}(F \cup \pi(T)) \le \operatorname{rk}_{\operatorname{P}}(F) + |T|$. Let $G = \pi^{-1}(\operatorname{cl}_{\operatorname{P}}(\pi(T) \cup F))$. Applying Lemma~\ref{lem:findgeometric} to the contraction $\lift(\P)/\pi^{-1}(F)$, we find a geometric flat $H \supset \pi^{-1}(F)$ and $T' \subseteq T \cap H$ such that $T' \cup \pi^{-1}(F)$ spans $H$. This contradicts that $\mathcal{N}$ is nested. 
\end{proof}

\begin{proof}[Proof of \Cref{thm:augbergmanfan}]
Let $\mathcal{G}$ be the building set on the lattice of flats of $\lift(\P) \times 0$ given by Lemma~\ref{lem:isbuilding}.
Let $\mathcal{H}$ be the building set on the lattice of flats of $\lift(\P) \times 0$ given by $F \cup 0$ for $F$ a flat of $\lift(\P)$ and $\{j\}$ for $j \in \EE$ not a loop of $\lift(\P)$. That this is a building set follows from Lemma~\ref{lem:isbuilding} by viewing $\lift(\P)$ as a polymatroid with cage $(1, \dotsc, 1)$. 
By \cite[Theorem 4]{FY04} the support of $\underline{\Sigma}_{\lift(\P) \times 0, \mathcal{G}}$ coincides with the support of $\underline{\Sigma}_{\lift(\P) \times 0, \mathcal{H}}$. By \cite[Lemma 5.14]{EHL}, under the isomorphism $\mathbb{R}^{\EE} \to \mathbb{R}^{\EE \cup 0}/\mathbb{R}$ obtained by sending $\be_j$ to $\be_j$, the support of $\underline{\Sigma}_{\lift(\P) \times 0, \mathcal{H}}$ coincides with the support of $\Sigma_{\lift(\P)}$. Under this isomorphism, $\Sigma_{\lift(\P) \times 0, \mathcal{G}}$  is identified with $\Sigma_{\P}$ by Lemma~\ref{lem:nestedisaug}. 
\end{proof}

\subsection{Augmented Bergman classes of polymatroids}

We begin by reviewing briefly balanced fans and their Chow homology classes; for details and proofs we point to \cite{FS97} and \cite[Section 5]{AHK18}.

\medskip
A pure-dimensional simplicial rational fan $\Sigma$ of dimension $d$ is \emph{balanced} if for any cone $\tau\in \Sigma$ of codimension 1, one has $\sum_{\sigma \supsetneq \tau} u_{\sigma\setminus \tau} \in \tau$, where $u_{\sigma\setminus \tau}$ denotes the primitive vector of the unique ray in $\sigma$ that is not in $\tau$.  Suppose a balanced fan $\Sigma$ is a subfan of a complete unimodular fan ${\widetilde\Sigma}$.  Let $A^d(X_{{\widetilde\Sigma}})$ be the $d$-th graded piece of the Chow ring of the toric variety $X_{{\widetilde\Sigma}}$, which is spanned by $\{[Z_\sigma]: \sigma \text{ a $d$-dimensional cone in $\widetilde\Sigma$}\}$, where $Z_\sigma$ is the torus-orbit closure in $X_{{\widetilde\Sigma}}$ corresponding to $\sigma$.  One then obtains a linear functional $w_\Sigma \in \operatorname{Hom}(A^d(X_{\widetilde\Sigma}),\ZZ)$ determined by $w_\Sigma( [Z_\sigma]) = 1$ if $\sigma\in \Sigma$ and $w_\Sigma([Z_\sigma]) = 0$ otherwise.
By the Poincar\'e duality property of the Chow ring $A^\bullet(X_{\widetilde\Sigma})$, the functional $w_\Sigma$ defines an element $[\Sigma] \in A_d(X_\Sigma)$.

\medskip
Returning to polymatroids, let $\P$ be a polymatroid on $\E$ with cage $\textbf a$ and rank $r$.
As the support of the augmented Bergman fan $\Sigma_{\P}$ coincides with the support of a Bergman fan, \cite[Theorem 3.8]{Gross} implies that $\Sigma_\P$ is a balanced subfan of the polystellahedral fan with cage $\textbf a$.

\begin{definition}\label{defn:augBergclass}
The \emph{augmented Bergman class} of $\P$ is the Chow homology class $[\Sigma_\P] \in A_r(X_{\textbf a})$ obtained by considering $\Sigma_\P$ as a balanced subfan of the polystellahedral fan with cage $\textbf a$.
\end{definition}

We will repeatedly use the following relation between the classes associated to a polymatroid and its multisymmetric lift.
Recall the birational map $u\colon X_\EE \to X_{\textbf a}$ induced by refinement of respective fans (\Cref{prop:polystellafan}).

\begin{lemma}\label{lem:augpullback}
The pullback $u^* [\Sigma_\P]$ is equal to the augmented Bergman class $[\Sigma_{\lift(\P)}]$ of the multisymmetric lift. 
\end{lemma}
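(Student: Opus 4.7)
The plan is to verify the identity $u^*[\Sigma_\P] = [\Sigma_{\lift(\P)}]$ in $A^{n-r}(X_\EE)$ using the nondegenerate Poincar\'e pairing $A^r(X_\EE) \times A^{n-r}(X_\EE) \to \ZZ$ on the smooth projective toric variety $X_\EE$. Since the classes $[V(\tau)] \in A^r(X_\EE)$ of orbit closures of $r$-dimensional cones $\tau \in \Sigma_\EE$ span $A^r(X_\EE)$, it suffices to show
\[
\deg_{X_\EE}\bigl([V(\tau)] \cdot u^*[\Sigma_\P]\bigr) = \deg_{X_\EE}\bigl([V(\tau)] \cdot [\Sigma_{\lift(\P)}]\bigr)
\]
for every such $\tau$. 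From the Minkowski-weight definition of $[\Sigma_{\lift(\P)}]$, the right-hand side equals $1$ if $\tau$ is a (necessarily top-dimensional) cone of $\Sigma_{\lift(\P)}$, and $0$ otherwise.

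For the left-hand side, I would apply the projection formula to rewrite it as $\deg_{X_{\textbf a}}\bigl(u_*[V(\tau)] \cdot [\Sigma_\P]\bigr)$, and compute $u_*[V(\tau)]$ via the standard pushforward formula for orbit closures under a unimodular toric refinement (see, e.g., \cite{Ful93} or \cite{CLS11}). Writing $\sigma_\tau \in \Sigma_{\textbf a}$ for the smallest cone containing $\tau$, this formula yields $u_*[V(\tau)] = [V(\sigma_\tau)]$ when $\dim \sigma_\tau = r$ (the restricted map $V(\tau) \to V(\sigma_\tau)$ is birational), and $u_*[V(\tau)] = 0$ when $\dim \sigma_\tau > r$ (the restriction has positive-dimensional fibers). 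Combined with the Minkowski-weight description of $[\Sigma_\P]$, the left-hand side equals $1$ precisely when $\sigma_\tau$ is a top-dimensional cone of $\Sigma_\P$.

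The proof then reduces to checking that, for $\tau$ an $r$-dimensional cone of $\Sigma_\EE$, the conditions ``$\sigma_\tau$ is a top-dimensional cone of $\Sigma_\P$'' and ``$\tau$ is a cone of $\Sigma_{\lift(\P)}$'' are equivalent. This is where Theorem~\ref{thm:augbergmanfan} does the work: since $\Sigma_\P$ and $\Sigma_{\lift(\P)}$ have the same support, if $\tau \in \Sigma_{\lift(\P)}$ then the relative interior of $\tau$ lies in this common support and, by connectedness, in the relative interior of a unique cone of $\Sigma_\P$, which must coincide with $\sigma_\tau$ and have dimension $r$ by purity. Conversely, if $\sigma_\tau \in \Sigma_\P$ has dimension $r$, then $\tau \subseteq \sigma_\tau$ lies in the support of the subfan $\Sigma_{\lift(\P)}$ of $\Sigma_\EE$, forcing $\tau \in \Sigma_{\lift(\P)}$. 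The main technical ingredient is the pushforward formula for orbit closures; everything else is bookkeeping enabled by Theorem~\ref{thm:augbergmanfan}.
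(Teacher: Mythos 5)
Your proof is correct and takes essentially the same approach as the paper, which simply cites \cite[Corollary 3.7]{FS97} (the Fulton--Sturmfels pullback formula for Minkowski weights along a refinement) together with Theorem~\ref{thm:augbergmanfan}; your Poincar\'e duality + projection formula + orbit-closure pushforward argument is precisely the standard proof of that pullback formula, so you have unfolded the cited lemma rather than replaced it.
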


\begin{proof}
The lemma follows from applying the formula \cite[Corollary 3.7]{FS97} for computing pullbacks in terms of Minkowski weights to \Cref{prop:polystellafan} and \Cref{thm:augbergmanfan}.
\end{proof}

We use the lemma to compute how augmented Bergman classes of polymatroids multiply as elements in the Chow ring $A^\bullet(X_{\textbf a})$.
We will need the following combinatorial notions.

\medskip
Given two polymatroids $\P_1$ and $\P_2$ on $\E$ with cage $\textbf{a}$, we define the \emph{polymatroid union} $\P_1 \vee \P_2$ to be the polymatroid with cage $\textbf a$ whose independence polytope is $(I(\P_1) + I(\P_2)) \cap \prod_{i\in \E} [0, a_i]$.
That this is indeed the independence polytope of a polymatroid follows from \cite[(35)]{Edm70}.
Define the \emph{polymatroid intersection} of $\P_1$ and $\P_2$ to be $\P_1 \wedge \P_2 \coloneqq (\P_1^{\perp} \vee \P_2^{\perp})^{\perp}$.
If we view $\lift(\P_i)$ as a polymatroid with cage $(1, \dotsc, 1)$, by \Cref{lem:projection} we have that $\lift(\P_1) \vee \lift(\P_2) = \lift(\P_1 \vee \P_2)$. Therefore $\lift(\P_1) \wedge \lift(\P_2) = \lift(\P_1 \wedge \P_2)$ by \Cref{prop:dual}.

\begin{theorem}\label{thm:polymatroidintersect}
Let $\P_1$ and $\P_2$ be polymatroids with cage $\textbf{a}$ and ranks $r_1$ and $r_2$, respectively.  Then, we have
$$[\Sigma_{\P_1}] \cdot [\Sigma_{\P_2}] = \begin{cases} [\Sigma_{\P_1 \wedge \P_2}] & \text{ if } (n - r_1) + (n - r_2) = n - \operatorname{rank}(\P_1 \wedge \P_2) \\ 
0 & \text{ otherwise.}\end{cases}$$
\end{theorem}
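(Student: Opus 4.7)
The plan is to reduce the statement to the matroid case by pulling back along the birational toric morphism $u\colon X_\EE \to X_{\textbf a}$ induced by the refinement $\Sigma_\EE \succeq \Sigma_\pi$. The strategy rests on three facts established earlier in the paper: (i) the pullback $u^*\colon A^\bullet(X_{\textbf a}) \to A^\bullet(X_\EE)$ is a split injection of rings (\Cref{lem:pullback}(1)); (ii) $u^*[\Sigma_{\P_i}] = [\Sigma_{\lift(\P_i)}]$ for $i = 1, 2$ by \Cref{lem:augpullback}; and (iii) multisymmetric lift commutes with polymatroid intersection, i.e.\ $\lift(\P_1) \wedge \lift(\P_2) = \lift(\P_1 \wedge \P_2)$, as recorded in the paragraph preceding the theorem.

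Carrying this out, the first step is to apply $u^*$ to the left-hand side and use multiplicativity together with (ii):
\[
u^*\bigl([\Sigma_{\P_1}] \cdot [\Sigma_{\P_2}]\bigr) \;=\; [\Sigma_{\lift(\P_1)}] \cdot [\Sigma_{\lift(\P_2)}].
\]
The next step is to invoke the corresponding intersection formula for augmented Bergman classes of matroids in $A^\bullet(X_\EE)$, which describes the product of two augmented Bergman classes of matroids on $\EE$ in terms of their matroid intersection. Because $\operatorname{rank}(\lift(\P)) = \operatorname{rank}(\P)$ for any polymatroid $\P$, the matroid rank condition $\operatorname{rank}(\lift(\P_1) \wedge \lift(\P_2)) = r_1 + r_2 - n$ coincides with the polymatroid rank condition in the theorem statement, and using (iii) the matroid formula identifies the product above with $[\Sigma_{\lift(\P_1 \wedge \P_2)}]$ (or zero). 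Applying \Cref{lem:augpullback} once more rewrites this as $u^*[\Sigma_{\P_1 \wedge \P_2}]$, and injectivity of $u^*$ from (i) then yields the desired equality in $A^\bullet(X_{\textbf a})$.

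The main obstacle is the matroid case itself. If it is not cited directly from prior work (e.g.\ \cite{EHL}), I would prove it in $X_\EE$ using the Minkowski-weight description of the intersection product on smooth toric varieties \cite{FS97}: both $\Sigma_{M_1}$ and $\Sigma_{M_2}$ are balanced subfans of the unimodular stellahedral fan, and one must show that the Minkowski-product weight is supported on the cones of $\Sigma_{M_1 \wedge M_2}$ with all weights equal to one in the correct codimension (and that the product vanishes otherwise for dimension reasons). This reduces to a combinatorial statement about augmented Bergman fans and matroid intersection in the spirit of the classical Speyer/Ardila--Klivans computations, adapted to incorporate the independence rays $\rho_j$ that distinguish the augmented setting.
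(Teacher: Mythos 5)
Your proposal matches the paper's proof exactly: pull back along $u^*$, use \Cref{lem:augpullback} and the compatibility of $\lift$ with $\wedge$ to reduce to the matroid case (cited as \cite[Theorem 1.6]{EHL}), then conclude by injectivity of $u^*$ from \Cref{lem:pullback}. The extra paragraph sketching a direct Minkowski-weight argument for the matroid case is not needed, since the paper simply cites \cite{EHL}.
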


When $\textbf{a} = (1, \ldots, 1)$, the above theorem is \cite[Theorem 1.6]{EHL}.  Our proof is a reduction to this case.

\begin{proof}
Applying \Cref{lem:augpullback} and using that $\lift(\P_1) \wedge \lift(\P_2) = \lift(\P_1 \wedge \P_2)$,
one obtains from \cite[Theorem 1.6]{EHL} that
$$u^*[\Sigma_{\P_1}] \cdot u^*[\Sigma_{\P_2}] = \begin{cases} u^*[\Sigma_{\P_1 \wedge \P_2}] & \text{ if } (n - r_1) + (n - r_2) = n - \operatorname{rank}(\P_1 \wedge \P_2) \\ 
0 & \text{ otherwise.}\end{cases}$$
The result now follows from the injectivity of $u^*$ (Lemma~\ref{lem:pullback}). 
\end{proof}

\begin{corollary}\label{cor:augspan}
The augmented Bergman classes of polymatroids with cage $\textbf{a}$ span $A^{\bullet}(X_{\textbf{a}})$ as an abelian group. 
\end{corollary}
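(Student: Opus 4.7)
My plan is to combine \Cref{prop:simpgen} with \Cref{thm:polymatroidintersect}.  By \Cref{prop:simpgen}, the group $A^\bullet(X_{\textbf a})$ is spanned as an abelian group by monomials $h_{S_1}\cdots h_{S_k}$ in the simplicial generators, so it suffices to show that every such monomial is either zero or an augmented Bergman class $[\Sigma_\P]$ for some polymatroid $\P$ of type $\textbf a$.  The inductive structure is supplied by \Cref{thm:polymatroidintersect}: if I can write each $h_S$ itself as $[\Sigma_{\P_S}]$ for some polymatroid $\P_S$ of rank $n-1$, then iterating the intersection formula gives $h_{S_1}\cdots h_{S_k} = [\Sigma_{\P_{S_1}}]\cdots [\Sigma_{\P_{S_k}}]$, which equals either $0$ or $[\Sigma_{\P_{S_1}\wedge\cdots\wedge \P_{S_k}}]$.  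The empty monomial $1\in A^0$ is $[\Sigma_{\P_{\max}}]$ for the rank $n$ polymatroid $\P_{\max}$ with $\rk_{\P_{\max}}(A) = \sum_{i\in A}a_i$.

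The key step is therefore to produce, for each nonempty $S\subseteq \E$, a polymatroid $\P_S$ of type $\textbf a$ and rank $n-1$ with $h_S = [\Sigma_{\P_S}]$ in $A^1(X_{\textbf a})$.  I would take $\P_S := (\P_1^S)^\perp$, where $\P_1^S$ is the rank-one polymatroid with independence polytope $\Delta_S^0$, i.e.\ $\rk_{\P_1^S}(T) = 1$ if $T\cap S\ne \emptyset$ and $0$ otherwise.  To verify the identity $h_S = [\Sigma_{\P_S}]$, I would pass via the injection $u^*\colon A^\bullet(X_{\textbf a})\hookrightarrow A^\bullet(X_\EE)$ of \Cref{lem:pullback}(1).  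By \Cref{lem:pullback}(3) we have $u^* h_S = h_{\pi^{-1}(S)}$, while by \Cref{lem:augpullback} combined with \Cref{prop:dual} we have $u^*[\Sigma_{\P_S}] = [\Sigma_{\lift(\P_1^S)^\perp}]$.  Directly from the definition of the multisymmetric lift, $\lift(\P_1^S)$ is the rank-one matroid on $\EE$ whose non-loops are the parallel elements of $\pi^{-1}(S)$ and whose loops are $\EE\setminus \pi^{-1}(S)$, so $\lift(\P_1^S)^\perp$ equals $U_{|\pi^{-1}(S)|-1,\,\pi^{-1}(S)} \oplus (\text{coloops on }\EE\setminus \pi^{-1}(S))$.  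The identity therefore reduces to the matroid-level statement
\[
h_T = \bigl[\Sigma_{U_{|T|-1,\, T}\, \oplus\, (\text{coloops on }\EE\setminus T)}\bigr] \quad \text{in } A^1(X_\EE),\quad T = \pi^{-1}(S).
\]

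The main obstacle is this matroid-level identity.  I would attack it by a direct computation of Minkowski weights: the augmented Bergman fan of $U_{|T|-1,T}\oplus(\text{coloops})$ is an explicit balanced codimension-one subfan of the stellahedral fan $\Sigma_\EE$, whose top-dimensional cones $\sigma_{I\le \mathcal F}$ can be enumerated directly from its lists of flats and independent sets, and one checks cone by cone that the resulting weights agree with those of the divisor $h_T = \sum_{F\not\supseteq T}x_F$ on $X_\EE$.  This is the matroid case of the corollary, and can be extracted from the stellahedral Chow-ring analysis of \cite{EHL}.
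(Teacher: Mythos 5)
Your proof is correct and takes essentially the same route as the paper: reduce to showing each simplicial generator $h_S$ is an augmented Bergman class, identify the relevant polymatroid as the dual of the rank-one polymatroid with independence polytope $\Delta_S^0$ (the paper calls it $\mathrm{H}_S$), then pull back along $u^*$ to reduce to the matroid case over the stellahedral variety. The paper closes that final matroid-level identity by citing \cite[Section 7.2]{EHL} directly rather than redoing the Minkowski-weight computation, but that is precisely the computation you propose to reproduce.
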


\begin{proof}
Recall that $A^{\bullet}(X_{\textbf{a}})$ is generated as a ring by the simplicial generators $\{h_S\}$, and in particular, the monomials in the $\{h_S\}$ span $A^\bullet(X_{\textbf a})$ as an abelian group.
By \Cref{thm:polymatroidintersect}, we are done once we show that each simplicial generator $h_S$ is an augmented Bergman class.

For each nonempty subset $S\subseteq \E$, let $\mathrm{H}_S$ be the polymatroid on $\E$ with cage $\textbf a$ whose dual polymatroid has the simplex $\Delta_S^0$ as its independence polytope.  By \Cref{prop:dual}, the multisymmetric lift $\lift(\mathrm{H}_S)$ is the matroid on $\EE$ whose unique circuit is $\pi^{-1}(S)$.
In \cite[Section 7.2]{EHL}, it is shown that the augmented Bergman class of this matroid is equal to $h_{\pi^{-1}(S)} \in A^1(X_\EE)$.  We thus conclude that $[\Sigma_{\mathrm{H}_S}] = h_S$ by \Cref{lem:pullback} and \Cref{lem:augpullback}.
\end{proof}

\begin{remark}\label{rem:schubert}
Arguing as in \cite[Section 7.2]{EHL}, one can show that the set of monomials
\[
\{h_{F_1}^{d_1}\cdots h_{F_k}^{d_k}: \emptyset\subsetneq F_1 \subsetneq \cdots \subsetneq F_k \subseteq \E,\ d_1 \leq |\pi^{-1}(F_1)| \text{ and } d_i < |\pi^{-1}(F_i\setminus F_{i-1})| \text{ for all $2\leq i\leq k$}\}
\]
form a $\ZZ$-basis for $A^\bullet(X_{\textbf a})$.
Moreover, combining with \Cref{thm:polymatroidintersect}, one can further show that these monomials are equal to the augmented Bergman classes of polymatroids whose multisymmetric lifts are $\Gamma_{\textbf a}$-fixed \emph{Schubert matroids} on ground set $\EE$. In particular, $A^{\bullet}(X_{\textbf{a}})$ is generated by the augmented Bergman classes of realizable polymatroids with cage $\textbf{a}$. This basis can also be obtained from the techniques of \cite{DF10} and Theorem~\ref{thm:val}.
\end{remark}

\subsection{Augmented Chow rings of polymatroids}
This subsection records the properties of the augmented Chow ring of a polymatroid, but is not logically necessary for subsequent sections of this paper.
{The non-augmented version of the following theorem appeared in \cite{PP23, CHLSW}}

\begin{theorem}\label{thm:Kahler}
Let $\ell \in A^1(X_{\Sigma_{\P}})$ be an element corresponding to a strictly convex piecewise linear function on $\Sigma_{\P}$. Then the following hold:
\begin{enumerate}
   \item (Poincar\'{e} duality) There is an isomorphism $\deg_{\P} \colon A^{r}(X_{\Sigma_{\P}}) \to \mathbb{Z}$ such that, for $0 \le k \le r/2$, the pairing 
   $$A^k(X_{\Sigma_{\P}}) \times A^{r - k}(X_{\Sigma_{\P}}) \to \mathbb{Z}, \quad (x, y) \mapsto \deg_{\P}(xy)$$
   is unimodular. 
   \item (Hard Lefschetz) For every $0 \le k \le r/2$, the map
   $$A^k(X_{\Sigma_{\P}}) \otimes \mathbb{Q} \to A^{r - k}(X_{\Sigma_{\P}}) \otimes \mathbb{Q}, \quad x \mapsto \ell^{r - 2k} x$$
   is an isomorphism.
   \item (Hodge-Riemann) For every $0 \le k \le r/2$, the bilinear form
   $$A^k(X_{\Sigma_{\P}}) \otimes \mathbb{Q} \times A^k(X_{\Sigma_{\P}}) \otimes \mathbb{Q} \to \mathbb{Q}, \quad (x, y) \mapsto (-1)^k \deg_{\P}(\ell^{r - 2k} xy)$$
   is positive definite on the kernel of multiplication by $\ell^{r - 2k + 1}$. 
\end{enumerate}
\end{theorem}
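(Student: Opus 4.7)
The plan is to deduce the Kähler package for $A^\bullet(X_{\Sigma_\P})$ from its matroidal counterpart for $A^\bullet(X_{\Sigma_{\lift(\P)}})$, which is the augmented Chow ring of the matroid $\lift(\P)$ and thus satisfies Poincaré duality, Hard Lefschetz, and Hodge--Riemann by the main theorem of \cite{BHMPW22}. The bridge is \Cref{thm:augbergmanfan}, which presents $\Sigma_\P$ as a coarsening of $\Sigma_{\lift(\P)}$ with the same support, inducing a proper birational toric morphism $q\colon X_{\Sigma_{\lift(\P)}} \to X_{\Sigma_\P}$ of smooth simplicial toric varieties. By standard toric intersection theory, $q^*$ is a split injection of graded rings with splitting $q_*$, and \Cref{lem:augpullback} upgrades this to compatibility with the degree maps arising from the balanced fundamental classes, namely $\deg_\P(\alpha) = \deg_{\lift(\P)}(q^*\alpha)$ for $\alpha \in A^r(X_{\Sigma_\P})$.

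For Poincaré duality I would argue directly: $\Sigma_\P$ is a balanced simplicial fan whose support coincides with a matroidal augmented Bergman fan support, so Chow-ring Poincaré duality follows from the general framework for Chow rings of balanced simplicial fans supported on tropical varieties. Equivalently, the embedding $q^*$ realizes $A^\bullet(X_{\Sigma_\P})$ as a Poincaré-self-dual subring of $A^\bullet(X_{\Sigma_{\lift(\P)}})$ via the degree compatibility above. For Hard Lefschetz and Hodge--Riemann with respect to a strictly convex piecewise linear function $\ell$ on $\Sigma_\P$, the pulled-back class $q^*\ell$ is nef but lies on the boundary of the ample cone of $\Sigma_{\lift(\P)}$. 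I would pick a strictly convex $\ell'$ on $\Sigma_{\lift(\P)}$ and set $\ell_t = q^*\ell + t\ell'$, which is strictly convex for every $t > 0$; by \cite{BHMPW22}, HL and HR hold for $\ell_t$ on the full ring $A^\bullet(X_{\Sigma_{\lift(\P)}})$. Restricting these forms to $q^*A^\bullet(X_{\Sigma_\P})$ and sending $t \to 0$ yields a positive semi-definite form on $A^\bullet(X_{\Sigma_\P})$ that Poincaré duality from the previous step forces to be nondegenerate, from which a standard signature argument recovers HR, and then HL, for $\ell$.

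The main obstacle is justifying the $t \to 0$ limit, since Hodge--Riemann is not a closed condition on the nef cone in general. The cleanest route, modeled on the strategy used in \cite{BHMPW22} for matroids themselves, is to promote the pullback $q^*$ to a full semismall decomposition: express $A^\bullet(X_{\Sigma_{\lift(\P)}})$ as a direct sum of $A^\bullet(X_{\Sigma_\P})$-modules indexed by the cones of $\Sigma_{\lift(\P)}$ collapsed by $q$, with each summand individually satisfying the Kähler package twisted by appropriate bilinear forms. Verifying semismallness of $q$ reduces, via the explicit refinement in \Cref{thm:augbergmanfan} and the description of polystellahedral cones in \Cref{def:polystellfan}, to a polyhedral combinatorics check on compatible pairs showing that the fibers of $q$ over a stratum corresponding to a chain $\mathcal{F}$ of flats have precisely half the codimension of that stratum. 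Once semismallness is established, the Kähler package for each summand---which one can identify with Chow-ring products of smaller polymatroids by inducting on rank---transfers to the Kähler package for $A^\bullet(X_{\Sigma_\P})$ itself, bypassing the delicate limiting argument.
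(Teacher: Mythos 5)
The paper's proof is a two-line citation: it observes that $\Sigma_\P$ has the same support as the (matroidal) Bergman fan of the free coextension $\lift(\P)\times 0$, and then invokes the fan-independence theorem of \cite[Theorem~1.6]{ADH22} together with \cite{AHK18}, which delivers the entire K\"ahler package (Poincar\'e duality, Hard Lefschetz, and Hodge--Riemann) for \emph{any} unimodular simplicial fan whose support is a Bergman fan support. Your proposal takes a genuinely different route through the refinement map $q\colon X_{\Sigma_{\lift(\P)}}\to X_{\Sigma_\P}$, and to your credit you correctly identify the central obstacle: $q^*\ell$ is only nef, and Hodge--Riemann is not a closed condition, so the ``perturb by $t\ell'$ and let $t\to 0$'' argument does not directly yield HR.

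However, the fix you propose does not close that gap. First, the semismall decomposition strategy runs in the wrong direction: in \cite{BHMPW22} one decomposes the ring whose K\"ahler package one wants to \emph{establish} into modules over a ring whose K\"ahler package is already \emph{known}. Here you propose to decompose $A^\bullet(X_{\Sigma_{\lift(\P)}})$ (already handled by \cite{BHMPW22}) into $A^\bullet(X_{\Sigma_\P})$-modules, and then assert that KP for the pieces ``transfers to'' KP for $A^\bullet(X_{\Sigma_\P})$; but the logic of the semismall argument does not give you anything about the base ring $A^\bullet(X_{\Sigma_\P})$ from a decomposition over it. Second, geometric semismallness of $q$ is doubtful: by \Cref{prop:polystellafan}, $q$ factors as a composition of stellar subdivisions (equivalently, torus-invariant blow-ups of $X_{\Sigma_\P}$) at centers of varying codimension, and a blow-up along a smooth center of codimension $\geq 3$ is never semismall, so the ``polyhedral combinatorics check'' you defer would fail in general. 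Third, even granting an orthogonal direct-sum decomposition compatible with $q^*\ell$, deducing HR for $A^\bullet(X_{\Sigma_\P})$ would still require HR for $A^\bullet(X_{\Sigma_{\lift(\P)}})$ with respect to the merely-nef class $q^*\ell$, which is exactly the thing you cannot assume.

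Finally, note that the ``general framework for Chow rings of balanced simplicial fans supported on tropical varieties'' you appeal to for Poincar\'e duality is precisely the framework that also gives Hard Lefschetz and Hodge--Riemann for any such fan: invoking it once for all three parts is the paper's proof, and it renders the entire second half of your argument unnecessary.
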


\begin{proof}
The support of $\Sigma_{\P}$ is the same at the support of the Bergman fan of $\lift(\P) \times 0$. The result then follows from \cite[Theorem 1.6]{ADH22} and \cite{AHK18}. For more details, see \cite[Proof of Corollary 4.7]{CHLSW}.
\end{proof}

As $X_{\Sigma_{\P}}$ is a subvariety of $X_{\textbf{a}}$, there is a restriction map $A^{\bullet}(X_{\textbf{a}}) \to A^{\bullet}(X_{\Sigma_{\P}})$.
{We often extend the degree map of Theorem~\ref{thm:Kahler} to the whole Chow ring $\deg_\P \colon A^\bullet(X_{\Sigma_\P}) \to \ZZ$ by declaring it to be zero on the lower-degree graded components.}
The degree map satisfies the following version of the projection formula: for any $x \in A^{\bullet}(X_{\textbf{a}})$, the degree of the image of $x$ in $A^{\bullet}(X_{\Sigma_{\P}})$ is equal to the degree in $A^{\bullet}(X_{\textbf{a}})$ of $x \cdot [\Sigma_{\P}]$. 

\begin{corollary}\label{cor:identify}
The kernel of $A^{\bullet}(X_{\textbf{a}}) \to A^{\bullet}(X_{\Sigma_{\P}})$ is $\operatorname{ann}([\Sigma_{\P}])$, so we may identify $A^{\bullet}(\P)$ with $A^{\bullet}(X_{\Sigma_{\P}})$. 
\end{corollary}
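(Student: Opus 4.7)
The plan is to use the projection formula recorded in the paragraph preceding the corollary together with Poincar\'e duality on both $A^{\bullet}(X_{\textbf{a}})$ and $A^{\bullet}(X_{\Sigma_{\P}})$. Write $\phi \colon A^{\bullet}(X_{\textbf{a}}) \to A^{\bullet}(X_{\Sigma_{\P}})$ for the restriction map. Recall that this map satisfies
\[
\deg_{\P}(\phi(x)) = \deg_{X_{\textbf{a}}}(x \cdot [\Sigma_{\P}]) \quad \text{for all } x \in A^{\bullet}(X_{\textbf{a}}).
\]

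First I would observe that $\phi$ is surjective. This is standard in toric geometry: both $A^{\bullet}(X_{\textbf{a}})$ and $A^{\bullet}(X_{\Sigma_{\P}})$ are simplicial Stanley--Reisner-type presentations (by \Cref{cor:polystellaChow} for the former, and since $\Sigma_\P$ is a unimodular simplicial subfan of $\Sigma_{\textbf{a}}$ for the latter), and each ray class generator of $A^{\bullet}(X_{\Sigma_{\P}})$ is the image under $\phi$ of the corresponding ray class generator of $A^{\bullet}(X_{\textbf{a}})$.

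Next I would prove the two containments. For $\ker \phi \subseteq \operatorname{ann}([\Sigma_{\P}])$: if $\phi(x) = 0$ then $\phi(xy) = 0$ for every $y \in A^{\bullet}(X_{\textbf{a}})$, so the projection formula gives $\deg_{X_{\textbf{a}}}(y \cdot (x \cdot [\Sigma_{\P}])) = \deg_{\P}(\phi(xy)) = 0$ for every $y$; since $X_{\textbf{a}}$ is a smooth projective toric variety, Poincar\'e duality forces $x \cdot [\Sigma_{\P}] = 0$. For $\operatorname{ann}([\Sigma_{\P}]) \subseteq \ker \phi$: if $x \cdot [\Sigma_{\P}] = 0$ then for every $y \in A^{\bullet}(X_{\textbf{a}})$ the projection formula yields $\deg_{\P}(\phi(x) \phi(y)) = \deg_{X_{\textbf{a}}}(xy \cdot [\Sigma_{\P}]) = 0$; by the surjectivity of $\phi$, $\phi(x)$ pairs to zero against every element of $A^{\bullet}(X_{\Sigma_{\P}})$, and by the Poincar\'e duality statement in \Cref{thm:Kahler}(1), we conclude $\phi(x) = 0$.

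Combining surjectivity with the equality $\ker \phi = \operatorname{ann}([\Sigma_{\P}])$ gives $A^{\bullet}(\P) = A^{\bullet}(X_{\textbf{a}})/\operatorname{ann}([\Sigma_{\P}]) \cong A^{\bullet}(X_{\Sigma_{\P}})$, as desired. The only point requiring genuine input is the Poincar\'e duality on the non-complete toric variety $X_{\Sigma_{\P}}$; this is exactly what \Cref{thm:Kahler}(1) provides (ultimately via the ADH/AHK machinery for Bergman-type supports), and with it in hand no step of the argument is a serious obstacle.
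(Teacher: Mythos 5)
Your proof is correct and follows essentially the same route as the paper's: projection formula plus Poincar\'e duality on $A^{\bullet}(X_{\textbf{a}})$ and on $A^{\bullet}(X_{\Sigma_{\P}})$ (the latter from \Cref{thm:Kahler}(1)). You simply write out the two containments and the surjectivity of the restriction map more explicitly, whereas the paper compresses these into one ``if and only if'' chain; the underlying argument is identical.
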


\begin{proof}
By Poincar\'{e} duality, an element $x \in A^{k}(X_{\textbf{a}})$ is in the kernel of the map to $A^{\bullet}(X_{\Sigma_{\P}})$ if and only if, for all $y \in A^{n - r - k}(X_{\textbf{a}})$, $\deg(x  \cdot [\Sigma_{\P}] \cdot y) = 0$. By Poincar\'{e} duality on $A^{\bullet}(X_{\textbf{a}})$, we see that $x \cdot [\Sigma_{\P}] = 0$. Therefore the kernel of $A^{\bullet}(X_{\textbf{a}}) \to A^{\bullet}(X_{\Sigma_{\P}})$ is $\operatorname{ann}([\Sigma_{\P}])$. 
\end{proof}

\begin{corollary}\label{cor:augChow}
We have that 
$$A^{\bullet}(\P)= \frac{\mathbb{Z}[x_F, y_i : F \text{ flat, }i \in \E \text{ non-loop}]}{\mathcal{I}_1 + \mathcal{I}_2 + \mathcal{I}_3 + \mathcal{I}_4}, \text{ where }$$
$$\mathcal{I}_1 = \langle x_{F_1} x_{F_2} : F_1, F_2 \text{ incomparable flats}\rangle, \quad \mathcal{I}_2 = \langle \prod_{i \in S} y_i^{a_i} : a_i > 0, \sum {a_i} > \operatorname{rk}_{\operatorname{P}}(S) \rangle,$$
$$\mathcal{I}_3 = \langle \prod_{i \in T} y_i^{a_i} x_F : T \cap F = \emptyset, a_i > 0, \operatorname{rk}_{\operatorname{P}}(F \cup T) \le \operatorname{rk}_{\operatorname{P}}(F) + \sum a_i \rangle, \text{ and } \mathcal{I}_4 = \langle y_i - \sum_{F \not \ni i} x_F \rangle. $$
\end{corollary}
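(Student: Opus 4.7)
The plan is to apply \Cref{cor:identify} to identify $A^\bullet(\P)$ with the Chow ring of the smooth toric variety $X_{\Sigma_\P}$, and then to present this Chow ring explicitly as a quotient of the presentation of $A^\bullet(X_{\textbf a})$ from \Cref{cor:polystellaChow}. Since $\Sigma_\P$ is a unimodular subfan of $\Sigma_{\textbf a}$, the inclusion $X_{\Sigma_\P} \hookrightarrow X_{\textbf a}$ is an open immersion of smooth varieties, and so it induces a surjection $A^\bullet(X_{\textbf a}) \twoheadrightarrow A^\bullet(X_{\Sigma_\P})$ (via Poincar\'e duality on $X_{\textbf a}$ and excision for Chow homology) whose kernel is generated by the classes of cycles supported on the complement, i.e., by monomials in the ray classes corresponding to cones of $\Sigma_{\textbf a}$ absent from $\Sigma_\P$.

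Starting from the presentation in \Cref{cor:polystellaChow}, I would first impose the relations coming from the removed rays: $x_S = 0$ for each non-flat $S \subsetneq \E$ and $y_i = 0$ for each loop $i$ of $\P$ (the latter being the $|S|=1$ specialization of $\mathcal{I}_2$, since $\rk_\P(\{i\}) = 0$ for loops). The remaining generators are then exactly $\{x_F : F \in \mathcal L_\P \setminus \{\E\}\}$ together with $\{y_i : i \text{ non-loop}\}$. Under these substitutions, the linear character relations of \Cref{cor:polystellaChow} specialize to $\mathcal{I}_4$, and the incomparability relations specialize to $\mathcal{I}_1$. The remaining higher-dimensional non-faces of $\Sigma_\P$ come precisely from the two defining conditions of the augmented Bergman fan: condition (1), $\rk_\P(\pi(T)) \ge |T|$ for $T \subseteq I$, forces the monomial $\prod_{j \in T} \widetilde y_j = \prod_{i \in \pi(T)} y_i^{e_i}$ (where $e_i = |T \cap \pi^{-1}(i)|$) to vanish whenever $\sum e_i > \rk_\P(\pi(T))$, yielding $\mathcal{I}_2$; condition (2) gives rise analogously to the mixed relations involving a flat $F$ from the chain, yielding $\mathcal{I}_3$.

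The main obstacle is to verify integrally that the Chow ring of the smooth but typically non-complete toric variety $X_{\Sigma_\P}$ has the expected Stanley--Reisner-plus-linear presentation. I would address this by pulling back along the birational toric map $u \colon X_\EE \to X_{\textbf a}$ of \Cref{lem:pullback}. By \Cref{thm:augbergmanfan}, the supports of $\Sigma_\P$ and $\Sigma_{\lift(\P)}$ coincide, so the preimage $u^{-1}(X_{\Sigma_\P})$ is the toric variety $X_{\Sigma_{\lift(\P)}}$, whose integral Chow ring has the desired Stanley--Reisner presentation by the matroid case of \cite{BHMPW22}. The injectivity of $u^*$ from \Cref{lem:pullback}(1), together with the $\Gamma_{\textbf a}$-equivariance of the construction (\Cref{rem:gamma1}), then lets one transfer the matroid presentation to $A^\bullet(X_{\Sigma_\P})$, confirming that it is cut out by $\mathcal{I}_1 + \mathcal{I}_2 + \mathcal{I}_3 + \mathcal{I}_4$.
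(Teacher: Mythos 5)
Your proposal takes essentially the same route as the paper's proof: apply \Cref{cor:identify} to identify $A^\bullet(\P)$ with $A^\bullet(X_{\Sigma_\P})$, read off the generators from the rays of $\Sigma_\P$, and translate the minimal non-faces (coming from the two conditions defining compatible pairs in the augmented Bergman fan) plus the linear relations into $\mathcal I_1, \mathcal I_2, \mathcal I_3, \mathcal I_4$. The paper simply asserts that the Chow ring of the smooth toric variety $X_{\Sigma_\P}$ has the Stanley--Reisner-plus-linear presentation and then identifies the non-faces, which is exactly your middle step.

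Where you diverge is in worrying explicitly about the integral presentation for the non-complete toric variety $X_{\Sigma_\P}$ and proposing a detour through $u^{-1}(X_{\Sigma_\P}) = X_{\Sigma_{\lift(\P)}}$ and \cite{BHMPW22}. Two remarks on this. First, the excision mechanism you invoke in your opening paragraph already closes the gap directly: the restriction $A^\bullet(X_{\textbf a}) \to A^\bullet(X_{\Sigma_\P})$ is surjective, and its kernel is the image of the classes supported on the complement $Z$, which (since $Z$ is $T$-invariant with finitely many orbits) is precisely the ideal generated by the monomials $\prod_{\rho \in \sigma} x_\rho$ for cones $\sigma \in \Sigma_{\textbf a} \setminus \Sigma_\P$. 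Combined with the integral presentation of $A^\bullet(X_{\textbf a})$ from \Cref{cor:polystellaChow}, this gives the integral Stanley--Reisner presentation of $A^\bullet(X_{\Sigma_\P})$ without the detour. Second, the transfer you sketch via injectivity of $u^*$ and $\Gamma_{\textbf a}$-equivariance is not immediate as stated: knowing $u^*f$ lies in the Stanley--Reisner ideal of $\lift(\P)$ does not by itself place $f$ in the Stanley--Reisner ideal of $\P$ without an additional argument (e.g., that the intersection of that ideal with $u^*A^\bullet(X_{\textbf a})$ is exactly $u^*$ of the expected ideal, say via $\Gamma_{\textbf a}$-averaging and $u_*u^* = \mathrm{id}$ on the level of the relevant subvarieties). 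So the detour is both unnecessary and under-argued; the excision route you already set up is the cleaner way to nail down integrality.
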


\begin{proof}
As $X_{\Sigma_{\P}}$ is a toric variety, its Chow ring is generated by classes corresponding to rays of $\Sigma_{\P}$, with monomial relations coming from minimal non-faces of the simplicial complex given by the faces of $\Sigma_{\P}$ and a linear relation for each element of $\EE$. The rays of $\Sigma_{\P}$ correspond to non-loops of $\EE$ and flats of $\P$. For $j_1, j_2$ non-loops in $\EE$ with $\pi(j_1) = \pi(j_2)$, the relation $\be_{j_1} - \be_{j_2}$ implies that the corresponding divisor classes are equal. 

Every non-face of the complex of cones in $\Sigma_{\P}$ contains either $\{F_1, F_2\}$ for $F_1, F_2$ incomparable, $\{j_1, \dotsc, j_{k}\}$ with $\operatorname{rk}_{\operatorname{P}}(\pi(j_1, \dotsc, j_k)) < k$, or $\{j_1, \dotsc, j_{\ell}, F\}$ for $\pi^{-1}(F)$ disjoint from $\{j_1, \dotsc, j_{\ell}\}$ and $\operatorname{rk}_{\operatorname{P}}(F \cup \pi(\{j_1, \dotsc, j_\ell\})) \le \operatorname{rk}_{\operatorname{P}}(F) + \ell$. Putting this all together implies the result. 
\end{proof}

\subsection{Augmented wonderful varieties of polymatroids}\label{ssec:augwondvar}

We sketch the geometric origins of the notions introduced in this section.
Recall that, given a realization $L \subseteq V = \bigoplus_{i\in \E}V_i$ of a polymatroid $\P$, its augmented wonderful variety $W_L$ is the closure of $L$ in $\prod_{\emptyset\subsetneq S \subseteq \E} \PP(\bigoplus_{i\in S}V_i \oplus \kk)$.
In the proof of \Cref{prop:toricisom}, we described $X_{\textbf a}$ as a sequence of blow-ups from $\PP(V\oplus \kk)$ along centers disjoint from $V \subset \PP(V\oplus \kk)$.  Hence, we have a natural inclusion of $V$ into $X_{\textbf a}$, and the variety $W_L$ is equivalently the closure of $L\subseteq V$ in $X_{\textbf a}$.

\begin{proposition}\label{prop:realizable}
Let $L \subseteq \bigoplus_{i \in \E} V_i$ be a realization of a polymatroid $P$ with cage $\textbf{a}$. Then the homology class $[W_L]$ is equal to $[\Sigma_{\P}]$. 
\end{proposition}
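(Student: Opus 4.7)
The plan is to reduce to the already-established matroid case via the toric birational map $u\colon X_\EE \to X_{\textbf{a}}$ of Lemma~\ref{lem:pullback}, exploiting the fact that taking multisymmetric lifts is compatible with both the fan refinement $\Sigma_\EE \succeq \Sigma_\pi$ and the choice of a sufficiently generic coordinatization.

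First, choose generic isomorphisms $V_i \simeq \kk^{\pi^{-1}(i)}$ for each $i\in \E$. As remarked after \Cref{lem:projection}, for a generic such choice the resulting inclusion $L \hookrightarrow \kk^\EE$ realizes the multisymmetric lift $\lift(\P)$ as a hyperplane arrangement. Let $W_L^\EE \subseteq X_\EE$ denote the closure of $L$ in the stellahedral variety; by definition, $W_L^\EE$ is the augmented wonderful variety associated to this matroid realization.

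Next, observe that $u$ restricts to a proper birational morphism $W_L^\EE \to W_L$. Indeed, by the discussion in \Cref{ssec:augwondvar}, both $X_\EE$ and $X_{\textbf{a}}$ arise from $\PP(V \oplus \kk)$ by a sequence of blow-ups along centers disjoint from $V$, and the refinement $\Sigma_\EE \succeq \Sigma_\pi$ underlying $u$ corresponds to further subdivisions disjoint from $V$. In particular, $u$ restricts to the identity on the common copy of $V \subseteq X_\EE, X_{\textbf a}$, so its restriction to $W_L^\EE$ — being proper and an isomorphism on the dense open $L$ — is a birational morphism onto $W_L$. Therefore $u_*[W_L^\EE] = [W_L]$ in $A_r(X_{\textbf{a}})$.

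Finally, the matroid case (an instance of the realizability statement for type $(1,\dotsc,1)$ proved in \cite{EHL}) gives $[W_L^\EE] = [\Sigma_{\lift(\P)}]$ in $A_r(X_\EE)$. Combining this with Lemma~\ref{lem:augpullback}, which states $u^*[\Sigma_\P] = [\Sigma_{\lift(\P)}]$, and the projection formula $u_* u^* = \mathrm{id}$ on Chow cohomology (a consequence of $u$ being birational, cf.\ Lemma~\ref{lem:pullback}(1)), we obtain the chain
\[
[W_L] \;=\; u_*[W_L^\EE] \;=\; u_*[\Sigma_{\lift(\P)}] \;=\; u_* u^* [\Sigma_\P] \;=\; [\Sigma_\P],
\]
which is the desired equality. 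The only step with genuine content beyond bookkeeping is the birationality of $u\colon W_L^\EE \to W_L$; that will be the main point to verify, and it reduces to the observation that $W_L^\EE$ is not contained in the exceptional locus of $u$ since it contains $L \subseteq V$, on which $u$ is an isomorphism.
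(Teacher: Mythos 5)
Your proof takes essentially the same route as the paper's: reduce to the matroid case via $u\colon X_\EE \to X_{\textbf a}$, use the realizability statement for stellahedral varieties from \cite{EHL}, and combine with \Cref{lem:augpullback} and $u_*u^* = \mathrm{id}$. The chain of equalities at the end is identical. The one genuine difference is where the genericity is arranged, and it is worth flagging a small gap in your version. The paper fixes the isomorphisms $V_i \simeq \kk^{\pi^{-1}(i)}$ once and for all (this is what gives meaning to $[\Sigma_\P] \in A_r(X_{\textbf a})$ via the identification $X_{\textbf a} \simeq X_\pi$), and then replaces $L$ by a generic $GL_{\textbf a}$-translate $g\cdot L$, using the observation that $GL_{\textbf a}$ is connected and therefore acts trivially on $A_\bullet(X_{\textbf a})$ to conclude $[W_L] = [W_{g\cdot L}]$. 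You instead keep $L$ fixed and ``choose generic isomorphisms'' $V_i \simeq \kk^{\pi^{-1}(i)}$. But re-choosing the coordinatization re-chooses the identification $X_{\textbf a} \simeq X_\pi$, and hence a priori changes which class in $A_r(X_{\textbf a})$ the symbol $[\Sigma_\P]$ denotes. Your argument implicitly requires that $[\Sigma_\P]$ is independent of the coordinatization, and the justification is exactly the same connectedness observation that the paper invokes explicitly: two identifications differ by an automorphism of $X_\pi$ lying in a connected group, which acts trivially on homology. So the content is the same, but you should state this step rather than absorb it into the word ``generic.'' Everything else — that $u$ restricts to a proper birational morphism $W_L^\EE \to W_L$ because $u$ is an isomorphism over $V \supseteq L$, and the final bookkeeping — is correct and matches the paper.
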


\begin{proof}
Because $GL_{\textbf a} = \prod_{i\in \E} GL(V_i)$ is connected, its action on $A_{\bullet}(X_{{\textbf{a}}})$ is trivial, so for any $g \in GL_{\textbf a}$, we have that $[W_L] = [g \cdot W_L] = [W_{g \cdot L}]$. If we choose a general $g \in GL_{\textbf a}$, then since $\kk$ is infinite, $g \cdot L$ is general with respect to the (fixed) choice of isomorphisms $V_i  \overset{\sim}{\to} \kk^{\pi^{-1}(i)}$, so $g \cdot L \subseteq \kk^{\EE}$ is a realization of $\lift(\P)$. 

By \cite[Corollary 5.11(3)]{EHL}, the homology class of the closure of $g \cdot L$ in $X_{\EE}$ is $[\Sigma_{\lift(\P)}]$. As $u \colon X_{\EE} \to X_{{\textbf{a}}}$ is an isomorphism over $g \cdot L$, we have $u_*[\Sigma_{\lift(\P)}] = [W_{g \cdot L}]$. By Lemma~\ref{lem:augpullback}, $[\Sigma_{\lift(\P)}] = u^*[\Sigma_{\P}]$, so the result follows because $u_*u^*$ is the identity (\Cref{lem:pullback}).
\end{proof}

\begin{remark}\label{rmk:chowequiv}
The closure of $L$ in $X_{\Sigma_{\P}} \subset X_{\textbf{a}}$ is $W_L$, and the restriction map $A^{\bullet}(X_{\Sigma_{\P}}) \to A^{\bullet}(W_L)$ is an isomorphism. Indeed, the iterated blow-up description of $W_L$ implies that $A^{\bullet}(W_L)$ is generated as a ring by the restriction of $h_E$ and the classes of strict transforms of exceptional divisors on $W_L$, so the restriction map $A^{\bullet}(X_{\textbf{a}}) \to A^{\bullet}(W_L)$ is surjective. As $W_L$ is the union of strict transforms of exceptional divisors and $L$, the inclusion $W_L \hookrightarrow X_{\textbf{a}}$ factors through $X_{\Sigma_{\P}}$. Therefore the restriction map $A^{\bullet}(X_{\textbf{a}}) \to A^{\bullet}(W_L)$ factors through $A^{\bullet}(X_{\Sigma_{\P}})$, so $A^{\bullet}(X_{\Sigma_{\P}}) \to A^{\bullet}(W_L)$ is surjective.
By \cite[Proposition 3.5]{Gross}, $A^{\bullet}(W_L)$ satisfies Poincar\'{e} duality.
A surjective map between Poincar\'{e} duality algebras of the same dimension is an isomorphism, so we conclude by Theorem~\ref{thm:Kahler}(1). 
\end{remark}

\section{The exceptional isomorphism}

In this section, we deduce the isomorphism $\bigoplus_{r\geq 0}\operatorname{Val}_{r}(\textbf a) \simeq \bigoplus_{r\geq 0} A_r(X_{\textbf a})$ of graded abelian groups in \Cref{thm:val}.
An intermediary object is the Grothendieck ring $K(X_{\textbf{a}})$ of vector bundles on $X_{\textbf{a}}$, which admits a polyhedral description as a polytope algebra.

\subsection{The polytope algebra}\label{ssec:polytopealg}
Let us review the polytope algebra \cite{McM89} and its relationship to the $K$-ring of a smooth projective toric variety \cite{Mor93}, following  \cite[Appendix A]{EHL}.

\smallskip
For a subset $S\subseteq \RR^\ell$, recall that $\one_S\colon \RR^\ell \to \ZZ$  denotes its indicator function.
Let $\Sigma$ be a projective fan in $\RR^\ell$ that is unimodular over $\ZZ^\ell$.  It defines a projective toric variety $X_\Sigma$.
A (lattice) polytope $Q\subseteq \RR^\ell$ is said to be a \emph{(lattice) deformation} of $\Sigma$ if its normal fan $\Sigma_{Q}$ coarsens $\Sigma$.

\begin{definition}
Let $\mathbb I(\Sigma)$ be the subgroup of $\ZZ^{(\RR^\ell)}$ generated by $\{\one_{Q} \mid Q \text{ a lattice deformation of $\Sigma$}\}$, and let $\operatorname{transl}(\Sigma)$ be the subgroup of $\mathbb I(\Sigma)$ generated by $\{\one_Q - \one_{Q+u} \mid u \in \ZZ^\ell\}$.  We define the \emph{polytope algebra} to be the quotient
\[
\overline{\mathbb I}(\Sigma) = \mathbb I(\Sigma) / \operatorname{transl}(\Sigma).
\]
\end{definition}

For a lattice deformation $Q$, denote by $[Q]$ its class in the polytope algebra $\overline{\mathbb I}(\Sigma)$.  The multiplication in the polytope algebra is induced by Minkowski sum, that is, by $[Q_1]\cdot[Q_2] = [Q_1+ Q_2]$.
As mentioned in \Cref{ssec:nef}, a correspondence between lattice deformations of $\Sigma$ and nef toric divisors on $X_\Sigma$ \cite[Chapter 6]{CLS11} associates to each lattice deformation $Q$ a nef divisor $D_Q$.  This identifies the polytope algebra with the $K$-ring as follows.

\begin{theorem}\label{thm:EHLpolytopealgebra} \cite[Theorem A.10]{EHL}
There is an isomorphism $\overline{\mathbb I}(\Sigma) \overset\sim\to K(X_\Sigma)$ defined by $[Q] \mapsto [\mathcal O_{X_\Sigma}(D_Q)]$.
\end{theorem}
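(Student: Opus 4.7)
The plan is to prove the isomorphism in three stages: constructing and verifying well-definedness of $\Phi$, establishing surjectivity, and establishing injectivity. The key tool throughout is the equivariant Euler characteristic of nef toric line bundles.

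First, I would define $\Phi$ on the free abelian group generated by symbols $\{\one_Q \colon Q \text{ a lattice deformation of }\Sigma\}$ by $\one_Q \mapsto [\mathcal{O}_{X_\Sigma}(D_Q)]$, and verify that it descends to $\overline{\mathbb{I}}(\Sigma)$. The translation relations $\one_Q - \one_{Q+u}$ are immediate from $D_{Q+u} = D_Q + \operatorname{div}(\chi^u)$. The delicate point is that any pointwise additive relation $\sum_i a_i \one_{Q_i} = 0$ among indicator functions of lattice deformations must produce a relation $\sum_i a_i [\mathcal{O}(D_{Q_i})] = 0$ in $K(X_\Sigma)$. To handle this, I would pass to $T$-equivariant $K$-theory and use that, since $D_Q$ is nef on the smooth projective toric variety $X_\Sigma$, Demazure vanishing gives $\chi_T([\mathcal{O}(D_Q)]) = \sum_{u \in Q \cap M} \chi^u \in \ZZ[M]$. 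Thus a relation of indicator functions evaluates, lattice point by lattice point, to a relation of equivariant Euler characteristics; combined with the injectivity of localization at torus-fixed points on $K_T(X_\Sigma)$ (the Lefschetz fixed point formula for smooth projective toric varieties), this yields the required relation in $K(X_\Sigma)$ after forgetting equivariance. The ring structure is preserved because Minkowski sum of deformations corresponds to tensor product of the associated line bundles: $\mathcal{O}(D_{Q_1 + Q_2}) \cong \mathcal{O}(D_{Q_1}) \otimes \mathcal{O}(D_{Q_2})$.

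For surjectivity, I would use that $K(X_\Sigma)$ is generated as an abelian group by classes of line bundles, a standard consequence of smoothness of $X_\Sigma$ together with Koszul-type resolutions of structure sheaves of torus-orbit closures. Every line bundle is a difference of two nef line bundles: if $D$ is any toric divisor and $A$ is ample, then both $D + kA$ and $kA$ are nef for $k \gg 0$, each corresponding to a lattice deformation of $\Sigma$, so each lies in the image of $\Phi$. For injectivity, I would compare both sides with $A^\bullet(X_\Sigma)_{\QQ}$. McMullen's theorem equips $\overline{\mathbb{I}}(\Sigma) \otimes \QQ$ with a grading (from the Minkowski-power filtration) whose top piece records mixed volumes and is identified with the top Chow group; on the $K$-theory side, the Chern character gives an isomorphism from the associated graded of the codimension filtration to $A^\bullet(X_\Sigma)_{\QQ}$. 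Since $\Phi$ intertwines these structures and induces the identity on the associated graded pieces, $\Phi \otimes \QQ$ is an isomorphism. Both sides are torsion-free abelian groups of the same rank (equal to the number of cones of $\Sigma$), so the integral statement follows.

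The main obstacle is the well-definedness step: the group $\mathbb{I}(\Sigma)$ harbors many non-obvious relations arising from polyhedral subdivisions, and handling each by direct geometric construction of a corresponding $K$-theoretic relation would be prohibitive. The equivariant Euler characteristic argument treats all such relations uniformly, but depends delicately on Demazure vanishing for nef toric divisors and on the injectivity of fixed-point localization, both of which require the full hypotheses that $\Sigma$ is projective and unimodular.
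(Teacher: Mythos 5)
This statement is quoted from \cite[Theorem A.9]{EHL} and is not proved in the present paper, so your attempt can only be compared against the strategy of that source (which ultimately traces to Morelli and Brion). Your overall skeleton --- define $\Phi$ on nef deformations, check it is a ring map, prove surjectivity because $K(X_\Sigma)$ is generated by line bundles and every divisor is a difference of nef divisors, then deduce injectivity from surjectivity plus torsion-freeness and rank equality --- is a reasonable and standard one, and the surjectivity step is fine once one notes that $[Q]^{-1}$ is a polynomial in $[Q]$ in the polytope algebra.

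The genuine gap is in the well-definedness step. You want to show that a pointwise relation $\sum_i a_i \one_{Q_i}=0$ forces $\sum_i a_i[\mathcal O_{X_\Sigma}(D_{Q_i})]=0$ in $K_T(X_\Sigma)$. From Demazure vanishing you correctly obtain $\sum_i a_i \chi_T(\mathcal O(D_{Q_i}))=0$. But you then invoke ``injectivity of localization at torus fixed points'' to upgrade equality of equivariant Euler characteristics to equality of equivariant $K$-classes, and that inference is false: $\chi_T\colon K_T(X_\Sigma)\to R(T)$ is the equivariant pushforward to a point, not the restriction to the fixed locus, and it has a large kernel --- already for $\mathbb P^1$, $K_T(\mathbb P^1)\simeq R(T)^{\oplus 2}$ maps onto $R(T)$. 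The localization theorem asserts injectivity of the restriction $K_T(X)\hookrightarrow\bigoplus_\sigma R(T)$, which is a different map, and knowing $\chi_T$ of a relation vanishes does not tell you the relation vanishes at each fixed point. To close the gap one must either (a) express $\one_Q$ through tangent cones at vertices (Lawrence--Varchenko/Brion), so that the pointwise relation among the $\one_{Q_i}$ descends to the required relations among the vertex characters $\chi^{v_\sigma(Q_i)}$ cone by cone, or (b) use the non-degenerate pairing $(\alpha,\beta)\mapsto\chi_T(\alpha\otimes\beta)$ on $K_T(X_\Sigma)$, which requires knowing the relation persists after Minkowski sum with an arbitrary deformation --- but that stability is essentially McMullen's non-trivial theorem that his product is well-defined, so it cannot be assumed at this stage of the argument. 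Your injectivity step is also hand-wavy (it asserts that $\Phi$ ``intertwines'' McMullen's weight grading with the codimension filtration on $K$ and ``induces the identity'' on associated gradeds, without saying what that means or why it holds), but since surjectivity together with rank equality and freeness of both groups would already give injectivity, that is a secondary concern compared to the localization misstep.
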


This isomorphism implies that a refinement of fans induces an injection of polytope algebras.

\begin{proposition}\label{prop:injectK}
Let $\Sigma$ and $\Sigma'$ be projective unimodular fans such that $\Sigma$ refines $\Sigma'$, so a lattice deformation $Q$ of $\Sigma'$ is also a lattice deformation of $\Sigma$.
Then, the map $\overline{\mathbb I}(\Sigma') \to \overline{\mathbb I}(\Sigma)$ that sends $[Q]\in \overline{\mathbb I}(\Sigma')$ to
$[Q]\in \overline{\mathbb I}(\Sigma)$ is injective.
\end{proposition}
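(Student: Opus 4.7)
The plan is to translate the statement into K-theory via Theorem~\ref{thm:EHLpolytopealgebra} and then apply the projection formula, in direct analogy with the argument mentioned in the proof of \Cref{lem:pullback}(1).

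First, I would observe that the refinement $\Sigma \succeq \Sigma'$ induces a proper birational toric morphism $f\colon X_\Sigma \to X_{\Sigma'}$ between smooth projective toric varieties. Under the identifications $\overline{\mathbb{I}}(\Sigma') \simeq K(X_{\Sigma'})$ and $\overline{\mathbb{I}}(\Sigma) \simeq K(X_\Sigma)$ provided by Theorem~\ref{thm:EHLpolytopealgebra}, the map in the proposition should correspond to the K-theoretic pullback $f^*\colon K(X_{\Sigma'}) \to K(X_\Sigma)$. The key verification is that if $Q$ is a lattice deformation of $\Sigma'$ with associated nef divisor $D_Q$ on $X_{\Sigma'}$, then the same $Q$ regarded as a lattice deformation of $\Sigma$ has associated nef divisor $f^*D_Q$ on $X_\Sigma$, since both divisors correspond to the same piecewise-linear support function on $\mathbb{R}^\ell$, and $f^*\mathcal{O}_{X_{\Sigma'}}(D_Q) = \mathcal{O}_{X_\Sigma}(f^*D_Q)$.

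Next, I would show $f^*$ is injective by exhibiting a left inverse, namely $f_*$. Since $f$ is a proper birational morphism between smooth projective varieties, $X_{\Sigma'}$ has rational singularities (indeed, is smooth), so $Rf_*\mathcal{O}_{X_\Sigma} = \mathcal{O}_{X_{\Sigma'}}$; equivalently $f_*[\mathcal{O}_{X_\Sigma}] = [\mathcal{O}_{X_{\Sigma'}}]$ in $K(X_{\Sigma'})$. The projection formula in K-theory then gives, for any $\mathcal{F} \in K(X_{\Sigma'})$,
\[
f_*(f^*\mathcal{F}) \;=\; \mathcal{F} \cdot f_*[\mathcal{O}_{X_\Sigma}] \;=\; \mathcal{F},
\]
so $f_* \circ f^*$ is the identity on $K(X_{\Sigma'})$, and therefore $f^*$ is split injective.

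I expect the main obstacle to be the bookkeeping in the first step, namely matching the combinatorial map $[Q] \mapsto [Q]$ with the geometric pullback $f^*$ under the isomorphism of Theorem~\ref{thm:EHLpolytopealgebra}. This amounts to checking against the conventions of \cite[Chapter 6]{CLS11} relating polytopes, support functions, and toric divisors, and verifying compatibility under refinement; once that is established, the projection formula step is routine.
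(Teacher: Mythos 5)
Your proof is correct and takes essentially the same route as the paper: identify the combinatorial map with $f^*$ on $K$-theory via Theorem~\ref{thm:EHLpolytopealgebra}, then conclude injectivity from $Rf_*\mathcal{O}_{X_\Sigma} = \mathcal{O}_{X_{\Sigma'}}$ (the paper cites \cite[Theorem 9.2.5]{CLS11} for this) together with the projection formula. You merely spell out a few details the paper leaves implicit.
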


\begin{proof}
Let $f\colon X_{\Sigma} \to X_{\Sigma'}$ be the corresponding toric birational map of the toric varieties induced by the map of fans $\Sigma \to \Sigma'$.
The given map $\overline{\mathbb I}(\Sigma') \to \overline{\mathbb I}(\Sigma)$, under the isomorphism of \Cref{thm:EHLpolytopealgebra}, is the pullback map $f^*\colon K(X_{\Sigma'}) \to K(X_\Sigma)$.  Its injectivity now follows from \cite[Theorem 9.2.5]{CLS11} and the projection formula.
\end{proof}

Applying \Cref{thm:EHLpolytopealgebra} to the polystellahedral variety $X_{\textbf a}$, noting that deformations of the polystellahedral fan $\Sigma_{\textbf a}$ are exactly expansions of polymatroids on $E$ (\Cref{prop:deformations}), we have the following.

\begin{corollary}\label{cor:polytopealgebra}
The map sending an expanded polymatroid $\expand(\P)$ on $\EE$ to $[\mathcal{O}_{X_{\textbf{a}}}(D_{{\expand(\P)}})]$ defines an isomorphism $\overline{\mathbb I}(\Sigma_{\textbf a}) \simeq K(X_{\textbf a})$.
\end{corollary}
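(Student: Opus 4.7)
The statement to prove is essentially a direct combination of two results already established in the paper. Applying \Cref{thm:EHLpolytopealgebra} (that is, \cite[Theorem A.9]{EHL}) to the polystellahedral fan $\Sigma = \Sigma_{\textbf a}$ immediately yields an isomorphism $\overline{\mathbb I}(\Sigma_{\textbf a}) \overset\sim\to K(X_{\textbf a})$ given by $[Q] \mapsto [\mathcal O_{X_{\textbf a}}(D_Q)]$, provided that $\Sigma_{\textbf a}$ is a projective and unimodular fan. Both properties are supplied by \Cref{prop:polystellafan}.

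My plan is therefore to verify that, under the polytope algebra description, every generator $[Q]$ can be taken to be of the form $[I(\expand(\P))]$ for some polymatroid $\P$ on $\E$. This is exactly the content of \Cref{prop:deformations}: every lattice deformation $Q$ of $\Sigma_{\textbf a}$ is a translate of $I(\expand(\P))$ for some polymatroid $\P$ on $\E$, and conversely each such $I(\expand(\P))$ is a lattice deformation. In the polytope algebra $\overline{\mathbb I}(\Sigma_{\textbf a})$, translations are quotiented out by definition, so the class of $Q$ equals $[I(\expand(\P))]$. Consequently the classes $\{[I(\expand(\P))] : \P \text{ a polymatroid on }\E\}$ generate $\overline{\mathbb I}(\Sigma_{\textbf a})$ as an abelian group, and the rule $\expand(\P) \mapsto [\mathcal O_{X_{\textbf a}}(D_{\expand(\P)})]$ is exactly the restriction of the isomorphism of \Cref{thm:EHLpolytopealgebra} to this generating set.

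Since the proof is an assembly of prior results, there is no serious obstacle; the only point worth being careful about is that the map described in the corollary is well-defined, i.e.\ that passing from $\expand(\P)$ to its class in $\overline{\mathbb I}(\Sigma_{\textbf a})$ kills exactly the translation relations already built into the target $K$-ring via \Cref{thm:EHLpolytopealgebra}. This is automatic because $\mathcal O_{X_{\textbf a}}(D_Q)$ depends only on $Q$ up to translation by a lattice vector.
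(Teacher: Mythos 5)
Your proof is correct and follows exactly the paper's approach: apply \Cref{thm:EHLpolytopealgebra} to $\Sigma_{\textbf a}$, then use \Cref{prop:deformations} to identify lattice deformations of $\Sigma_{\textbf a}$ with translates of polytopes $I(\expand(\P))$. The care you take about well-definedness modulo translations is implicit in the paper but worth making explicit.
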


We will thus use these two notions, the polytope algebra and the $K$-ring, interchangeably for the polystellahedral varieties.
We will use \Cref{prop:injectK} in conjunction with the following method of ``breaking up'' a $K$-class on a polystellahedral variety into smaller pieces when considered as a $K$-class on the stellahedral variety.

\begin{proposition}\label{prop:cubes}
Let $\P$ be a polymatroid on $\E$ of rank $r\leq n$, and let $\P'$ be the polymatroid with cage $\textbf a$ defined by $I(\P') = I(\P) \cap \prod_{i\in E} [0,a_i]$.
Then, the class $[I(\expand(\P))] \in \overline{\mathbb I}(\Sigma_\EE)$ is equal to a linear combination $[I(\lift(\P'))] + \sum_k a_k [I(\M_k)]$ where the $\M_k$ are matroids on $\EE$ of rank strictly less than $r$.
\end{proposition}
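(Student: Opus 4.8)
The plan is to work entirely on the level of indicator functions in $\mathbb I(\Sigma_\EE)$, exploiting the identity $I(\lift(\P)) = I(\expand(\P)) \cap [0,1]^\EE$ from \Cref{lem:projection}. The difference $\one_{I(\expand(\P))} - \one_{I(\lift(\P))}$ is the indicator function of $I(\expand(\P)) \setminus [0,1]^\EE$, which I want to decompose, via inclusion–exclusion along the facet hyperplanes $\{x_j = 1\}$ of the cube, into a signed sum of indicator functions of polytopes of the form $I(\expand(\P)) \cap \{x_j \ge 1 \text{ for } j\in J\}$ for nonempty $J\subseteq \EE$. Concretely, writing $H_j = \{x \in \RR^\EE : x_j \le 1\}$, one has $[0,1]^\EE = \bigcap_{j\in\EE}(H_j \cap \RR^\EE_{\ge 0})$, and since $I(\expand(\P)) \subseteq \RR^\EE_{\ge 0}$, the standard inclusion–exclusion gives
\[
\one_{I(\lift(\P))} = \sum_{J \subseteq \EE} (-1)^{|J|}\, \one_{I(\expand(\P)) \cap \bigcap_{j\in J}\{x_j \ge 1\}},
\]
so that $[I(\expand(\P))] - [I(\lift(\P))] = -\sum_{\emptyset \ne J \subseteq \EE} (-1)^{|J|}[Q_J]$ in $\overline{\mathbb I}(\Sigma_\EE)$, where $Q_J = I(\expand(\P)) \cap \bigcap_{j\in J}\{x_j \ge 1\}$. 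Here one must check that each $Q_J$ is a lattice deformation of $\Sigma_\EE$: it is a face-truncation-type intersection of a deformation with coordinate half-spaces at integer heights, and since $\Sigma_\EE$ is the stellahedral fan (the finest polystellahedral fan, refining everything in sight), the normal fan of $Q_J$ coarsens $\Sigma_\EE$; alternatively, $Q_J$ is itself the independence polytope of an expanded polymatroid after translating by $-\be_J$ and intersecting appropriately, which I would spell out using \Cref{prop:deformations}.

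The remaining point is that each nonempty $Q_J$, after translation into the box, is the independence polytope of a matroid on $\EE$ of rank strictly less than $r$. Translating by $-\be_J$, the polytope $Q_J - \be_J$ is contained in $I(\expand(\P)) - \be_J$, and its intersection with the cube $[0,1]^\EE$ is the independence polytope of some matroid $\M_J$ on $\EE$ (by the same reasoning as in \Cref{lem:projection}, every lattice polytope contained in the cube with edges parallel to the standard simplices and containing the origin is a matroid independence polytope — here I would invoke that $Q_J - \be_J$ is a generalized permutohedron contained in the cube, hence a matroid polytope). Crucially $Q_J - \be_J \subseteq \RR^\EE_{\ge 0}$ forces, for $j\in J$, that the $j$-th coordinate of any point of $Q_J$ is exactly $1$ (not more, by the cube constraint already used; not less, by definition of $Q_J$), so $Q_J$ lies in the affine subspace $\{x_j = 1 : j\in J\}$; hence $\dim Q_J \le n - |J| < r$ once $|J| \ge 1$ — wait, this needs $n - |J| < r$, which is false in general, so instead I bound the rank directly: $\rk_{\M_J}(\EE) = \max\{\sum_j x_j : x \in Q_J - \be_J\} = \max\{\sum_j x_j : x\in Q_J\} - |J| \le \rk_{\expand(\P)}(\EE) - |J| = r - |J| < r$. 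Then each $[Q_J] = [I(\M_J)]$ in $\overline{\mathbb I}(\Sigma_\EE)$ by translation-invariance, and collecting terms with $a_J = -(-1)^{|J|}$ yields the claimed expression.

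The main obstacle I anticipate is verifying cleanly that $Q_J$ (equivalently $Q_J - \be_J$) really is a matroid independence polytope — i.e., that intersecting the expanded-polymatroid independence polytope with the half-spaces $\{x_j \ge 1\}$ and then sliding back into the unit cube produces exactly a $0/1$-polytope that is the independence polytope of a matroid, rather than merely a lattice generalized permutohedron. This should follow from the fact that $I(\expand(\P))$ is a generalized permutohedron (its normal fan coarsens $\Sigma_\EE$, which refines the braid fan in $\RR^\EE$), that generalized permutohedra are closed under intersection with coordinate half-spaces at integer heights and under integer translation, and that a lattice generalized permutohedron contained in $[0,1]^\EE$ is a matroid polytope; I would cite \Cref{prop:deformations} and \Cref{lem:projection} and argue the rank bound as above. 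Everything else is a routine inclusion–exclusion bookkeeping argument in the polytope algebra.
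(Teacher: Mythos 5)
There are two genuine gaps, both stemming from treating the cube-boundary too casually.

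First, the inclusion--exclusion identity
$\one_{I(\lift(\P))} = \sum_{J \subseteq \EE} (-1)^{|J|}\, \one_{Q_J}$
with $Q_J = I(\expand(\P)) \cap \bigcap_{j\in J}\{x_j \ge 1\}$ is false as an identity of functions, because the closed half-spaces $\{x_j\ge 1\}$ overlap $[0,1]^\EE$ in the facets $\{x_j=1\}$. Evaluating the right-hand side at a point $x\in I(\lift(\P))$ with $x_j=1$ for exactly the indices in a nonempty set $A$ gives $\sum_{J\subseteq A}(-1)^{|J|}=0$, not $1$. Concretely, for $\E=\{1\}$, $\mathbf a=(2)$, $\rk_\P(1)=2$, one has $I(\expand(\P))=\operatorname{conv}\{(0,0),(2,0),(0,2)\}$ and $I(\lift(\P))=[0,1]^2$, and your formula fails at $(1,\tfrac12)$. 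The alternating sum is only valid with the \emph{open} half-spaces $\{x_j>1\}$, and converting those back to classes of closed lattice polytopes reintroduces correction terms supported on the hyperplanes $\{x_j=1\}$ (in the example, the two boundary segments of the square). These extra faces must be identified and controlled, which is exactly the bookkeeping the paper's proof handles by using the standard subdivision identity $\one_Q=\sum_C \pm\,\one_C$ over the \emph{interior cells} of the tiling of $I(\expand(\P))$ by all integer translates of $[0,1]^\EE$.

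Second, even after fixing the signs, the pieces $Q_J-\be_J$ are in general independence polytopes of \emph{polymatroids}, not matroids, so the step ``$[Q_J]=[I(\M_J)]$ by translation-invariance'' does not hold: you define $\M_J$ via $(Q_J-\be_J)\cap[0,1]^\EE$, but $Q_J-\be_J$ need not lie in the unit cube (take $\E=\{1\}$, $\mathbf a=(3)$, $\rk_\P(1)=3$, $J=\{1\}$: then $Q_J-\be_1=\{x\ge 0: x_1+x_2+x_3\le 2\}$ contains $(0,2,0)$). So the decomposition you produce terminates in polymatroid classes and the argument must be iterated, or one must invoke \Cref{lem:cubes} directly. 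This is precisely what the paper does: it tiles $\RR^\EE$ by all integral translates of $[0,1]^\EE$ at once, so that by \Cref{lem:cubes} \emph{every} cell of the induced subdivision of $I(\expand(\P))$ is an integral translate $I(\M)+v$ of a matroid independence polytope, with $I(\lift(\P))$ the unique cell having $v=0$ and all others having $v\in\ZZ^\EE_{\ge0}\setminus\{0\}$, whence rank $<r$ by your (correct) rank computation. Your strategy is the same in spirit, but without \Cref{lem:cubes} and the full tiling it does not close.
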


That $\P'$ is a polymatroid is explained above Theorem~\ref{thm:polymatroidintersect}. We will need the following lemma.

\begin{lemma}\label{lem:cubes}\cite[Lemma 7.3]{EHL}
An intersection of the independence polytope $I(\P) \subset \RR^E$ with an integral translate of the unit cube $[0,1]^E$, if nonempty, is an integral translate of $I(\M)$ for some matroid $\M$ on $E$.
\end{lemma}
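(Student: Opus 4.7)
The plan is to reduce \Cref{lem:cubes} to \Cref{lem:projection} applied with the identity map $\pi = \mathrm{id}_E$, after absorbing the translation vector into an auxiliary polymatroid's rank function.

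First I would reduce to the case $c \in \ZZ^E_{\geq 0}$. If $c_i \leq -2$ for some $i$, then $x_i \leq c_i + 1 < 0$ contradicts $x \in I(\P) \subseteq \RR^E_{\geq 0}$, so $Q$ is empty. If $c_i = -1$, the intersection forces $x_i = 0$; such coordinates will appear as loops of the output matroid with the matching coordinate of the translation vector set to $0$, reducing the problem to the polymatroid restricted to $E' := \{i : c_i \geq 0\}$. So I assume $c \geq 0$. Substituting $y = x - c$, the intersection becomes
\[
R := (I(\P) - c) \cap [0,1]^E = \{y \in [0,1]^E : \textstyle\sum_{i \in T} y_i \leq f(T) \text{ for all } T \subseteq E\},
\]
where $f(T) := \rk_\P(T) - \sum_{i \in T} c_i$. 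It suffices to show $R = I(\M)$ for some matroid $\M$ on $E$.

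The function $f$ is submodular with $f(\emptyset) = 0$, and nonemptiness of $R$ forces $f \geq 0$ (else $y(T) \leq f(T) < 0$ has no nonnegative solution). However, $f$ may fail to be monotone when some $c_i$ exceeds the local rank jump in $\P$. The key move is to replace $f$ by its upward monotonization $\tilde f(T) := \min_{T' \supseteq T} f(T')$. Then $\tilde f$ is monotone by construction, $\tilde f(\emptyset) = 0$ using $f \geq 0$, and a routine check shows it is submodular: choosing minimizers $T_A^*, T_B^*$ for $\tilde f(A), \tilde f(B)$, applying submodularity of $f$ at $(T_A^*, T_B^*)$ and using $T_A^* \cup T_B^* \supseteq A \cup B$, $T_A^* \cap T_B^* \supseteq A \cap B$ yields $\tilde f(A) + \tilde f(B) \geq \tilde f(A \cup B) + \tilde f(A \cap B)$. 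So $\tilde f$ is the rank function of a polymatroid $\P'$ on $E$. Moreover on $\RR^E_{\geq 0}$, the systems $\{y(T) \leq f(T)\}_T$ and $\{y(T) \leq \tilde f(T)\}_T$ cut out the same polytope: for $y \geq 0$ and any $T' \supseteq T$, one has $y(T) \leq y(T') \leq f(T')$, so $y(T) \leq \tilde f(T)$; the converse is immediate from $\tilde f \leq f$. Hence $R = I(\P') \cap [0,1]^E$.

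To conclude, I would apply \Cref{lem:projection} to $\P'$ with $\pi = \mathrm{id}_E$. Since this map has type $(1, \dots, 1)$, one has $\expand(\P') = \P'$ and $\lift(\P')$ is a matroid on $E$; the lemma yields $I(\lift(\P')) = I(\P') \cap [0,1]^E = R$. Setting $\M := \lift(\P')$, we obtain $Q = c + I(\M)$, and undoing the initial reduction gives the desired integral translate of $I(\M)$ for a matroid on the full set $E$. The main subtlety is the monotonization step: the shifted function $f$ is submodular but generally not monotone, so one cannot directly view $R$ as the intersection of a polymatroid polytope with the cube; recognizing that replacing $f$ by $\tilde f$ preserves $R$ while producing a genuine polymatroid rank function is the key that unlocks \Cref{lem:projection}.
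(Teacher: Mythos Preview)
The paper does not actually prove \Cref{lem:cubes}; it is simply quoted from \cite[Lemma~7.3]{EHL} without argument, so there is no proof here to compare against. Your argument is correct. The reduction to $c\in\ZZ_{\ge 0}^{E}$ is fine (the $c_i=-1$ coordinates become loops as you say), the shifted function $f(T)=\rk_\P(T)-c(T)$ is submodular with $f(\emptyset)=0$ and $f\ge 0$ by nonemptiness, and the upward monotonization $\tilde f(T)=\min_{T'\supseteq T}f(T')$ is indeed submodular, monotone, normalized, and integer-valued, hence a polymatroid rank function. Your verification that on $\RR_{\ge 0}^{E}$ the systems $\{y(T)\le f(T)\}$ and $\{y(T)\le \tilde f(T)\}$ coincide is correct, so $R=I(\P')\cap[0,1]^{E}$.

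A pleasant feature of your argument is that the final step invokes \Cref{lem:projection} of \emph{this} paper with $\pi=\mathrm{id}_E$ to identify $I(\P')\cap[0,1]^{E}$ with $I(\lift(\P'))$, making the proof self-contained within the present paper rather than deferring to the external reference. Strictly speaking, \Cref{lem:projection} is phrased for the ambient map $\pi\colon\EE\to E$ fixed in the paper's standing notation, but its proof is insensitive to $\pi$ and applies verbatim to $\pi=\mathrm{id}_E$; you might add a clause to that effect. No genuine gap.
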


\begin{proof}[Proof of \Cref{prop:cubes}]
By tiling $\RR^\EE$ by integral translates of the unit cube $[0,1]^\EE$, we obtain a polyhedral subdivision of $I(\expand(\P))$, with every cell of the subdivision being integral translates of $I(\M)$ for some matroid $\M$ on $\EE$ by \Cref{lem:cubes}.  By \Cref{lem:projection}, the polytope $I(\lift(\P'))$ is one of the maximal interior cells of this subdivision.
All other interior cells of the subdivision are of the form $I(\M) + v$ for $0\neq v \in \ZZ_{\geq 0}^\EE$, which implies that such matroids $\M$ are of rank strictly less than $r$ since $\expand(\P)$ has rank $r$.
\end{proof}

\subsection{The exceptional isomorphism}
We now use the map $u \colon X_{\EE} \to X_{\textbf{a}}$ to construct an exceptional ring isomorphism $\phi_{\textbf a} \colon K(X_{\textbf a}) \overset\sim\to A^\bullet(X_{\textbf a})$.
Its ``exceptional'' nature is that it differs from the Chern character map, which is an isomorphism $ch\colon K(X)\otimes \QQ \to A^\bullet(X)\otimes \QQ$ for any smooth projective variety $X$.
Similar exceptional isomorphisms appeared in \cite{BEST,EHL,LLPP}.  We prepare by recalling the case of $\textbf a = (1,\ldots, 1)$ established in \cite{EHL}.

\begin{theorem}\label{thm:stellaHRR} \cite[Theorem 1.8]{EHL}
There is a unique ring isomorphism $\phi_{\EE}\colon K(X_{\EE}) \to A^\bullet(X_{\EE})$ such that $\phi_{\EE}([\mathcal O_{X_\EE}(h_S)]) = 1+h_S$ for all nonempty $S\subseteq \EE$.
Moreover, for any matroid $\M$ on $\EE$ of rank $r$, the map $\phi_\EE$ satisfies
\[
\phi_{\EE}([I(\M)]) = \xi_0 + \xi_1 + \cdots + \xi_{r}
\]
where $\xi_i \in A^i(X_{\EE})$ for all $i$ and $\xi_{r} = [\Sigma_{\M^\perp}]$.
\end{theorem}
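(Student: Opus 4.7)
The plan is to use the polytope algebra isomorphism $K(X_\EE) \cong \overline{\mathbb I}(\Sigma_\EE)$ (\Cref{thm:EHLpolytopealgebra}), together with the fact that the simplicial generators $\{h_S\}_{\varnothing \subsetneq S \subseteq \EE}$ form a basis of $A^1(X_\EE)$ and generate $A^\bullet(X_\EE)$ as a ring (\Cref{prop:simpgen}). Under the polytope algebra identification, $[\mathcal O(h_S)]$ corresponds to the simplex class $[\Delta_S^0]$, where $\Delta_S^0 = \operatorname{conv}(\{0\} \cup \{\be_j : j \in S\})$.

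For uniqueness, I would first show that the classes $\{[\mathcal O(h_S)]\}$ generate $K(X_\EE)$ as a ring, a $K$-theoretic analogue of \Cref{prop:simpgen}. In polytope algebra terms this reduces to writing every matroid polytope class as an integer combination of Minkowski products of simplex classes $[\Delta_S^0]$; a M\"obius-type expansion on the lattice of flats accomplishes this. Once this generation is in hand, any ring map $\phi_\EE$ with the prescribed values on these generators is determined uniquely.

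For existence and the isomorphism property, I would verify well-definedness by checking that the defining relations in $K(X_\EE)$ among the $[\mathcal O(h_S)]$ become valid identities among $\{1 + h_S\}$ in $A^\bullet(X_\EE)$. The toric presentation of $K(X_\EE)$ has relations coming from characters of the torus and monomial non-face relations, and each becomes a polynomial identity in the $(1 + h_S)$ which holds in $A^\bullet(X_\EE)$ by \Cref{cor:polystellaChow} specialized to $\textbf a = (1,\dotsc,1)$. Surjectivity then follows because $h_S = \phi_\EE([\mathcal O(h_S)]) - 1$ lies in the image of $\phi_\EE$; as both rings are free abelian of equal rank for a smooth projective toric variety, a surjection is an isomorphism.

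The substantive claim is the formula for $\phi_\EE([I(\M)])$, in particular that the top-degree term equals $[\Sigma_{\M^\perp}]$. My plan is to expand $[I(\M)]$ in $\overline{\mathbb I}(\Sigma_\EE)$ as a signed Minkowski-sum combination of simplex classes via M\"obius inversion on the flats of $\M$, apply $\phi_\EE$, and isolate the degree-$r$ component. The identification of this component with $[\Sigma_{\M^\perp}]$ is the main obstacle. I would reduce it, via Poincar\'e duality on $A^\bullet(X_\EE)$ (\Cref{thm:Kahler} applied to the boolean matroid), to a comparison of intersection numbers against a basis of degree-$(n-r)$ monomials in $\{h_S\}$. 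The intersection numbers of $[\Sigma_{\M^\perp}]$ against such monomials admit a classical combinatorial description in terms of transversals of $\M^\perp$, while the corresponding numbers produced by the $\phi_\EE$-expansion reduce via matroid duality and a carefully chosen inclusion-exclusion identity to the same transversal data. Establishing this combinatorial matching will be the technical heart of the argument.
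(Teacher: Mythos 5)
This statement is not proved in the paper under review; it is imported verbatim as \cite[Theorem 1.8]{EHL}, so there is no in-paper argument to compare against. I will therefore assess your proposal on its own terms.

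Your setup is plausible: working in $\overline{\mathbb I}(\Sigma_\EE)$, establishing that the $[\mathcal O(h_S)]$ generate $K(X_\EE)$, and deducing uniqueness and (with some care) existence is a reasonable plan, and indeed similar reductions appear in the EHL paper. But there are two genuine gaps, and you flag the second yourself without closing it.

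First, the claimed ``M\"obius-type expansion on the lattice of flats'' expressing $[I(\M)]$ as an integer combination of Minkowski products of the $[\Delta_S^0]$ is asserted, not proved. It is true that the $[\Delta_S^0]$ generate $\overline{\mathbb I}(\Sigma_\EE)$ as a ring (this follows from \Cref{prop:simpgen} together with the polytope-algebra inversion trick for $[Q]^{-1}$), but an explicit, flat-indexed formula for $[I(\M)]$ that you can actually extract a top-degree term from is precisely the kind of identity that would need its own proof. The known signed-Minkowski-sum decompositions of matroid polytopes (Ardila--Benedetti--Doker and relatives) concern $B(\M)$ rather than $I(\M)$, are not indexed by flats, and it is not automatic that they descend to identities in $\overline{\mathbb I}(\Sigma_\EE)$ in the form you need.

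Second, and more seriously, you acknowledge that identifying the degree-$r$ component with $[\Sigma_{\M^\perp}]$ is ``the technical heart'' and then only gesture at it. The claim that the intersection numbers of $[\Sigma_{\M^\perp}]$ against monomials in $\{h_S\}$ ``admit a classical combinatorial description in terms of transversals'' is essentially the content of \Cref{thm:HR} (in the matroid case, Theorem 5.2.4 of \cite{BES}). But within the logical architecture of the present paper, \Cref{thm:HR} is deduced from \Cref{thm:polystellaHRR}, which in turn rests on this very theorem---so invoking it here would be circular unless you supply an independent proof of the transversal degree formula. Even if you appeal to \cite{BES} externally, you would still need to carry out the ``carefully chosen inclusion-exclusion identity'' matching the $\phi_\EE$-expansion to those transversal counts, which is not sketched. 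As written, the proposal identifies where the difficulty lies but does not resolve it.
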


The generalization to cage $\textbf a$ is as follows.
Recall that we have a birational toric map $u\colon X_{\EE} \to X_{\textbf{a}}$ induced by the fact that the fan $\Sigma_{\textbf a}$ is a coarsening of $\Sigma_{\EE}$.

\begin{theorem}\label{thm:polystellaHRR}
There exists a (necessarily unique) isomorphism $\phi_{\textbf a}\colon K(X_{\textbf a}) \overset\sim\to A^\bullet(X_{\textbf a})$ such that we have a commuting diagram
\begin{center}
\begin{tikzcd}
K(X_{{\textbf a}}) \arrow[r, "\phi_{\textbf a}"] \arrow[d, hook, "u^*"']
& A^{\bullet}(X_{{\textbf a}}) \arrow[d, hook, "u^*"] \\
K(X_{{\EE}}) \arrow[r, "\phi_{\EE}"]
& A^{\bullet}(X_{{\EE}}).
\end{tikzcd}
\end{center}
Moreover, for any polymatroid $\P$ on $E$ with cage $\textbf a$ and rank $r$, the map $\phi_{\textbf a}$ satisfies 
\[
\phi_{\textbf a}\big([I(\expand(\P))] \big) = \xi_0 + \xi_1 + \cdots + \xi_{r}
\]
where $\xi_i \in A^i(X_{\textbf a})$ for all $i$ and $\xi_{r} = [\Sigma_{\P^\perp}]$.
\end{theorem}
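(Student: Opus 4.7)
The plan is to construct $\phi_{\textbf{a}}$ by descending $\phi_{\EE}$ through the injective vertical ring homomorphisms $u^*$. \Cref{prop:injectK} combined with \Cref{cor:polytopealgebra} gives that $u^*\colon K(X_{\textbf{a}})\to K(X_\EE)$ is an injective ring homomorphism, and \Cref{lem:pullback}(1) gives the same for $u^*\colon A^\bullet(X_{\textbf{a}})\to A^\bullet(X_\EE)$. So it suffices to show that $\phi_{\EE}$ carries $u^*K(X_{\textbf{a}})$ into $u^*A^\bullet(X_{\textbf{a}})$; having done this, one defines $\phi_{\textbf{a}}(\alpha)=(u^*)^{-1}\phi_{\EE}(u^*\alpha)$, which is automatically a ring homomorphism making the diagram commute, and its uniqueness is immediate from the injectivity of $u^*$ on $A^\bullet$.

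To verify this inclusion, it is enough to check it on a set of ring generators of $K(X_{\textbf{a}})$. Since $\{h_S\colon\emptyset\subsetneq S\subseteq \E\}$ is a $\ZZ$-basis of $\operatorname{Pic}(X_{\textbf{a}})=A^1(X_{\textbf{a}})$ by \Cref{prop:simpgen}, and $K$ of a smooth complete toric variety is generated as a ring by line bundle classes, we may take $\{[\mathcal O(h_S)]^{\pm 1}\}$ as ring generators of $K(X_{\textbf{a}})$. \Cref{lem:pullback}(3) gives $u^*[\mathcal O(h_S)]=[\mathcal O(h_{\pi^{-1}(S)})]$, so the defining formula of $\phi_{\EE}$ from \Cref{thm:stellaHRR} yields
\[
\phi_{\EE}(u^*[\mathcal O(h_S)]) \;=\; 1+h_{\pi^{-1}(S)} \;=\; u^*(1+h_S) \;\in\; u^* A^\bullet(X_{\textbf{a}}),
\]
and the analogous identity for the inverse follows by multiplicativity. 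This produces $\phi_{\textbf{a}}$ with $\phi_{\textbf{a}}([\mathcal O(h_S)])=1+h_S$. Surjectivity then follows because each $h_S=\phi_{\textbf{a}}([\mathcal O(h_S)])-1$ lies in the image and $A^1(X_{\textbf{a}})$ generates $A^\bullet(X_{\textbf{a}})$ as a ring; injectivity is forced by the injectivity of $u^*$ on $A^\bullet$ combined with the injectivity of $\phi_{\EE}$.

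For the polymatroid assertion, \Cref{prop:cubes} gives the identity $[I(\expand(\P))] = [I(\lift(\P))] + \sum_k a_k [I(\M_k)]$ in $\overline{\mathbb I}(\Sigma_\EE)=K(X_\EE)$, where each $\M_k$ is a matroid on $\EE$ of rank strictly less than $r$. Applying $\phi_\EE$ and invoking \Cref{thm:stellaHRR} term-by-term, each $\phi_\EE([I(\M_k)])$ is supported in degrees at most $\operatorname{rk}(\M_k)<r$, while $\phi_\EE([I(\lift(\P))]) = \eta_0+\dotsb+\eta_r$ with $\eta_r=[\Sigma_{\lift(\P)^\perp}]$. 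Hence $\phi_\EE(u^*[I(\expand(\P))]) = \phi_\EE([I(\expand(\P))])$ is supported in degrees $\le r$, with degree-$r$ component equal to $[\Sigma_{\lift(\P)^\perp}]$. By \Cref{prop:dual} and \Cref{lem:augpullback}, $[\Sigma_{\lift(\P)^\perp}] = [\Sigma_{\lift(\P^\perp)}] = u^*[\Sigma_{\P^\perp}]$. Writing $\phi_{\textbf{a}}([I(\expand(\P))]) = \sum_i \xi_i$ with $\xi_i\in A^i(X_{\textbf{a}})$, the degree-preservation and injectivity of $u^*$ on $A^\bullet$ force $\xi_i=0$ for $i>r$ and $\xi_r=[\Sigma_{\P^\perp}]$. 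The main subtlety is that \Cref{prop:cubes} produces a decomposition only on the refined variety $X_\EE$; the fact that we can still pin down the top-degree component back on $X_{\textbf{a}}$ hinges crucially on $u^*$ being both degree-preserving and injective on Chow.
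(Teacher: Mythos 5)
Your proof is correct and follows essentially the same route as the paper: descend $\phi_{\EE}$ through the injective pullbacks $u^*$, check compatibility on the simplicial line-bundle generators, and deduce the top-degree statement from \Cref{prop:cubes}, \Cref{thm:stellaHRR}, \Cref{prop:dual}, and \Cref{lem:augpullback}. The only divergence is that you invoke the general fact that $K$ of a smooth complete toric variety is generated by line-bundle classes (and include $[\mathcal O(h_S)]^{-1}$ among the generators), whereas the paper instead notes via the polytope-algebra description that $[Q]^{-1}$ is a polynomial in $[Q]$, so the classes $[\mathcal O(h_S)]$ alone already generate $K(X_{\textbf a})$ as a ring; the two justifications are interchangeable here.
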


\begin{proof}
That the two vertical maps are injections follows from \Cref{lem:pullback} and \Cref{prop:injectK}.
With these injections, we now need to show that the map $\phi_{\EE}$ restricts to give a well-defined map $\phi_{\textbf a}$ that is surjective.
Recall that the Chow ring $A^\bullet(X_{\textbf a})$ is generated by the simplicial generators $h_S$.
We claim that $K(X_{\textbf a})$ is also generated as a ring by the line bundles $[\mathcal O_{X_{\textbf a}}(h_S)]$.
Both the well-definedness and the surjectivity of $\phi_{\textbf a}$ would then follow from \Cref{thm:stellaHRR} since $u^*h_S = h_{\pi^{-1}(S)}$ by \Cref{lem:pullback}.

For the claim, one notes that for any deformation $Q$ of a projective unimodular fan $\Sigma$, the inverse $[Q]^{-1}$ of the class $[Q]\in \overline{\mathbb I}(\Sigma)$ is a polynomial in $[Q]$.  See for instance \cite[Proof of Lemma A.12]{EHL}.  The claim thus follows because the simplicial generators form a basis of $A^1(X_{\textbf a})$.

For the second statement about $\phi_{\textbf a}\big([I(\expand(\P))]\big)$, consider $[I(\expand(\P))]$ as an element of $K(X_{\EE})$ via the injection $u^*$.
\Cref{prop:cubes} and \Cref{thm:stellaHRR} imply that $\phi_\EE([I(\expand(\P))]) = \xi_0 + \cdots + \xi_r$ where $\xi_i \in A^i(X_\EE)$ and $\xi_r = [\Sigma_{\lift(\P)^\perp}]$.
Lastly, \Cref{lem:augpullback} and \Cref{prop:dual} imply that $[\Sigma_{\lift(\P)^\perp}] = u^*[\Sigma_{\P^\perp}]$.
\end{proof}

\begin{remark}
Let $\chi\colon K(X_{\textbf a}) \to \ZZ$ be the sheaf Euler characteristic map.  We sketch how one can show, arguing similarly to \cite[Section 8.1]{EHL}, that the isomorphism $\phi_{\textbf a}$ satisfies
\[
\chi(\xi) = \deg_{X_{\textbf a}} \Big( \phi_{\textbf a}(\xi) \cdot \prod_{i\in \E} (1 + y_i)^{a_i} \Big) \quad\text{for all $\xi\in K(X_{\textbf a})$}.
\]
By conjugating the isomorphism $\phi_{\textbf a}$ with the map that sends the $K$-class of a vector bundle to its dual and the map that is multiplication by $(-1)^k$ on $A^{k}(X_{\textbf{a}})$, one obtains an isomorphism $\zeta_{\textbf a}$ such that $\zeta_{\textbf a}([\mathcal O_{W_L}]) = [W_L]$ for any realization $L\subseteq V$ of a polymatroid with cage $\textbf a$.
Combining \Cref{prop:realizable} with \Cref{rem:schubert}, one shows that $A^\bullet(X_{\textbf a})$ is spanned as an abelian group by $\{[W_L]: L\subseteq V\}$, and hence $\zeta_{\textbf a}$ satisfies $\chi(\xi) = \deg_{X_{\textbf a}}\big( \zeta_{\textbf a}(\xi) \cdot (1 + h_E + \cdots + h_E^n)\big)$.
One then computes that the anti-canonical divisor of $X_{\textbf a}$ is $h_E + \sum_{i\in \E} a_i y_i$, and by Serre duality concludes the desired formula.
\end{remark}

\section{Proofs of main theorems}

We now use \Cref{thm:polystellaHRR} to prove \Cref{thm:val} and \Cref{thm:HR}.

\subsection{The valuative group is isomorphic to the Chow homology group}

\begin{proof}[Proof of \Cref{thm:val}]
Since $B(\P^\perp) = -B(\P) + \textbf a$ and $I(\expand(\P)) = \big(p_\pi^{-1}(B(\P)) + \RR_{\leq 0}^\EE\big) \cap \RR_{\geq 0}^\EE$, 
the assignment $\one_{B(\P)} \mapsto \one_{I(\expand(\P^\perp))}$ gives a well-defined map $\bigoplus_{r=0}^{n} \operatorname{Val}_r(\textbf a) \to \mathbb I(\Sigma_{\textbf a})$, because all the operations --- negation, translation, inverse image, Minkowski sum, and restriction --- behave well with respect to indicator functions.
Hence, we have a map of abelian groups $\bigoplus_{r=0}^{n} \operatorname{Val}_r(\textbf a) \to K(X_{\textbf a})$ defined by $\one_{B(\P)}\mapsto [I(\expand(\P^\perp))]$.
Let $\psi$ be the composition of this map with the map $\phi_{\textbf a}\colon K(X_{\textbf a}) \to A^\bullet(X_{\textbf a})$ in \Cref{thm:polystellaHRR}. Note that $\psi$ is upper-triangular with respect to the gradings on $\bigoplus_{r=0}^{n} \operatorname{Val}_r(\textbf a)$ and $A_{\bullet}(X_{\textbf{a}})$.

\Cref{cor:augspan}, stating that $A_\bullet(X_{\textbf a})$ is spanned by $\{[\Sigma_\P] : \P \text{ a polymatroid with cage $\textbf a$}\}$, implies surjectivity of $\psi$.
For injectivity, suppose we have polymatroids $\P_1, \ldots, \P_k$ with cage $\textbf a$ and integers $c_1, \ldots, c_k$ such that $\sum_{j=1}^k c_j [\Sigma_{\P_j}] = 0$.
Then by \Cref{lem:augpullback}, the validity of \Cref{thm:val} when $\textbf a = (1, \ldots, 1)$, established in \cite[Theorem 1.5]{EHL}, implies that $\sum_j c_j \one_{B(\lift(\P_j))} = 0$.
Since each $\P_j$ has cage $\textbf a$, and since the image under the projection $p_\pi$ of the unit cube $[0,1]^{\EE}$ is the box $\prod_{i\in E} [0,a_i] \subset \RR^E$, \Cref{lem:projection} implies that $p_\pi\big(B(\lift(\P_j))\big) = B(\P_j)$.
We thus conclude $\sum_j c_j \one_{B(\P_j)} = 0$, proving the injectivity of $\psi$. Therefore $\psi$ is an isomorphism, and so the map that sends $\textbf{1}_{B(\P)}$ to $[\Sigma_\P]$ is an isomorphism. 
\end{proof}

Let $\psi$ be the map as constructed in the proof above.  Noting that polymatroid duality induces an involution of $\bigoplus_{r=0}^{n} \operatorname{Val}_r(\textbf a)$, by composing $\psi$ with the inverse $\phi_{\textbf a}^{-1}$ of the isomorphism in \Cref{thm:polystellaHRR}, we conclude the following.

\begin{corollary}
The map of abelian groups $\bigoplus_{r=0}^{n} \operatorname{Val}_r(\textbf a) \to K(X_{\textbf a})$ defined by $\one_{B(\P)}\mapsto [I(\expand(\P))]$ is an isomorphism.
\end{corollary}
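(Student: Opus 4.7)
The plan is to deduce this corollary directly from Theorem~\ref{thm:val} together with Theorem~\ref{thm:polystellaHRR} by exploiting polymatroid duality. Recall from the proof of Theorem~\ref{thm:val} that the map $\psi_1\colon \bigoplus_r \operatorname{Val}_r(\textbf a) \to K(X_{\textbf a})$ sending $\one_{B(\P)} \mapsto [I(\expand(\P^\perp))]$ is well defined, and $\psi := \phi_{\textbf a} \circ \psi_1$ is shown to be an isomorphism. Since $\phi_{\textbf a}$ is an isomorphism by Theorem~\ref{thm:polystellaHRR}, the factor $\psi_1$ is itself an isomorphism. The target map $\Psi\colon \one_{B(\P)} \mapsto [I(\expand(\P))]$ differs from $\psi_1$ only by replacing $\P$ with $\P^\perp$, so the strategy is to write $\Psi = \psi_1 \circ D$, where $D$ is the involution of the valuative group induced by polymatroid duality.

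The first step is therefore to verify that the assignment $\one_{B(\P)} \mapsto \one_{B(\P^\perp)}$ extends to a well-defined involution $D$ on $\bigoplus_r \operatorname{Val}_r(\textbf a)$. This is where the polytope description of duality is used: since $B(\P^\perp) = -B(\P) + \textbf a$, we have the identity of indicator functions
\[
\one_{B(\P^\perp)}(x) = \one_{B(\P)}(\textbf a - x) \quad \text{for all } x \in \RR^\E.
\]
Hence the affine involution $x \mapsto \textbf a - x$ of $\RR^\E$ carries any linear relation $\sum_j c_j \one_{B(\P_j)} = 0$ in $\ZZ^{(\RR^\E)}$ to the relation $\sum_j c_j \one_{B(\P_j^\perp)} = 0$, so $D$ is well defined. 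Since $(\P^\perp)^\perp = \P$, the map $D$ is an involution, hence an isomorphism. Note that $D$ exchanges the direct summands $\operatorname{Val}_r(\textbf a)$ and $\operatorname{Val}_{n-r}(\textbf a)$.

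Now one checks that $\Psi = \psi_1 \circ D$ on generators: for any polymatroid $\P$ of type $\textbf a$,
\[
(\psi_1 \circ D)(\one_{B(\P)}) = \psi_1(\one_{B(\P^\perp)}) = [I(\expand((\P^\perp)^\perp))] = [I(\expand(\P))] = \Psi(\one_{B(\P)}).
\]
Because $D$ and $\psi_1 = \phi_{\textbf a}^{-1} \circ \psi$ are both isomorphisms, their composition $\Psi$ is an isomorphism, completing the proof.

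There is no real obstacle here beyond the well-definedness of $D$, which is immediate from the affine description of duality; everything else is a formal consequence of Theorem~\ref{thm:val} and Theorem~\ref{thm:polystellaHRR}.
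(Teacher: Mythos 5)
Your proof is correct and takes essentially the same route as the paper: the paper also observes that polymatroid duality induces an involution of $\bigoplus_r \operatorname{Val}_r(\textbf a)$ and that composing the isomorphism $\phi_{\textbf a}^{-1}\circ\psi$ (your $\psi_1$) with that involution yields the map $\one_{B(\P)}\mapsto[I(\expand(\P))]$. You spell out the well-definedness of the duality involution via the affine identity $\one_{B(\P^\perp)}(x)=\one_{B(\P)}(\textbf a - x)$, which the paper leaves implicit; otherwise the arguments coincide.
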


\subsection{The Hall--Rado formula}\label{ssec:HR}

We first note a reinterpretation of the Hall--Rado condition.

\begin{lemma}\label{lem:HR}\cite[Theorem 2]{M75}
A collection of subsets $S_1, \ldots, S_r$ of $E$ satisfies the Hall--Rado condition with respect to a polymatroid $\P = (E,\operatorname{rk})$ of rank $r$ if and only if there exists a map $f\colon [r] \to E$ with $f(i) \in S_i$ such that $\sum_{i=1}^r \be_{f(i)} \in B(\P)$.
\end{lemma}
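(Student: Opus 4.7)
The plan is to reduce this equivalence to the classical Rado matroid-transversal theorem, using the multisymmetric lift $\lift(\P)$ constructed in Section~3.1 as the bridge between polymatroids and matroids. The forward direction is an immediate consequence of the polymatroid inequalities defining $I(\P)$; the reverse direction packages the combinatorial content of Rado's theorem.

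For the forward direction, suppose $f\colon [r]\to \E$ satisfies $f(i)\in S_i$ and $x:=\sum_{i=1}^r \be_{f(i)}\in B(\P)$. For $J\subseteq [r]$, set $T=\bigcup_{j\in J}S_j$. Because $f(j)\in T$ for every $j\in J$, the number of preimages of $T$ under $f$ is at least $|J|$, so $\sum_{e\in T} x_e \geq |J|$. On the other hand, $x\in B(\P)\subseteq I(\P)$ gives $\sum_{e\in T} x_e\leq \rk_\P(T)$. Combining yields the Hall--Rado inequality $|J|\leq \rk_\P(T)$.

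For the reverse direction, I would first establish the rank identity $\rk_{\lift(\P)}(\pi^{-1}(T))=\rk_\P(T)$ for every $T\subseteq \E$, by inspecting the defining minimization of $\lift(\P)$: the candidate $A=T$ realizes the value $\rk_\P(T)$, and submodularity of $\rk_\P$ together with the type bound $\rk_\P(i)\leq a_i$ shows that every other $A\subseteq \E$ yields a value at least this large. This identity turns the Hall--Rado condition for $(S_1,\dotsc,S_r)$ with respect to $\P$ into the Hall--Rado condition for $(\pi^{-1}(S_1),\dotsc,\pi^{-1}(S_r))$ with respect to the rank-$r$ matroid $\lift(\P)$. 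Rado's theorem then produces distinct $g_1,\dotsc,g_r\in\EE$ with $g_i\in\pi^{-1}(S_i)$ such that $\{g_1,\dotsc,g_r\}$ is a basis of $\lift(\P)$. Setting $f(i):=\pi(g_i)\in S_i$, the vector $\sum_{i=1}^r \be_{f(i)}$ equals $p_\pi(\be_{\{g_1,\dotsc,g_r\}})$, which lies in $p_\pi(B(\lift(\P)))\subseteq B(\P)$: the projection $p_\pi$ preserves the coordinate sum, and it carries $I(\lift(\P))$ into $I(\P)$ by the discussion following \Cref{lem:projection}.

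The main obstacle is the rank identity $\rk_{\lift(\P)}(\pi^{-1}(T))=\rk_\P(T)$, since everything else is either elementary or a direct invocation of Rado's theorem. An alternative that avoids citing Rado would be a direct augmenting-path argument between $[r]$ and the non-loops of $\lift(\P)$, but the reduction above is noticeably cleaner and reuses machinery already set up in the paper.
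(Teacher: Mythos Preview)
The paper does not prove this lemma at all; it is cited as \cite[Theorem 2]{M75} and used as a black box. Your argument is correct and, pleasantly, reduces the polymatroid statement to the classical matroid case (Rado's transversal theorem) via the paper's own multisymmetric-lift machinery. Two small remarks: first, the rank identity $\rk_{\lift(\P)}(\pi^{-1}(T))=\rk_\P(T)$ that you flag as the main obstacle is already recorded in the paper immediately after \Cref{lem:projection} (for $\P$ of type $\mathbf a$), so you can cite it rather than re-derive it; second, since the lemma does not fix a type for $\P$, you should say explicitly that you choose $\pi$ with fibers large enough that $a_i\geq \rk_\P(i)$ for all $i$, which is always possible and is what makes the type bound you invoke available. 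With those tweaks your proof is a clean, self-contained derivation that the paper itself does not supply.
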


\begin{proof}[Proof of \Cref{thm:HR}]
For a nonempty subset $S\subseteq \E$, we showed in the proof of \Cref{cor:augspan} that if $\mathrm{H}_S$ is the polymatroid whose dual polymatroid has the simplex $\Delta_S^0$ as its independence polytope, then $[\Sigma_{\mathrm{H}_S}] = h_S$.  
Applying this to \Cref{thm:polystellaHRR}, we have $\phi_{\textbf a}([I(\expand(\mathrm{H}_S^\perp))])  = 1 +h_S$.
Thus, as the degree map $\deg_{X_{\textbf a}}$ is zero on $A^{i}(X_{\textbf a})$ for $i<n$, \Cref{thm:polystellaHRR} implies that
\[
\deg_{X_{\textbf a}}\big( \phi_{\textbf a}([I(\expand(\P^\perp))][I(\expand(\mathrm{H}_{S_1}^\perp))]\cdots[I(\expand(\mathrm{H}_{S_r}^\perp))])\big) = 
\deg_{X_{\textbf a}}\big( [\Sigma_{\P}] \cdot h_{S_1} \cdots h_{S_r} \big).
\]
Let $\widetilde \P$ be the polymatroid of rank $n$ on $\E$ whose independence polytope is $I(\P^\perp) + \Delta_{S_1}^0 + \cdots + \Delta_{S_r}^0$.
Since multiplication in the polytope algebra is Minkowski sum and expansion commutes with Minkowski sum, 
we have that $[I(\expand(\widetilde \P))] $ equals the class $[I(\expand(\P^\perp))][I(\expand(\mathrm{H}_{S_1}^\perp))]\cdots[I(\expand(\mathrm{H}_{S_r}^\perp))]$ in the left-hand-side of the equation above.
By \Cref{lem:HR} and the fact that $B(\P^\perp) = - B(\P) + \textbf a$, we have that $\textbf a\in I(\widetilde \P)$ if and only if $S_1, \ldots, S_r$ satisfies the Hall--Rado condition with respect to $\P$.
The theorem now follows from the following \Cref{lem:corner}.
\end{proof}

\begin{lemma}\label{lem:corner}
For $\widetilde \P$ a polymatroid of rank $n$ on $\E$, not necessarily with cage $\textbf a$, we have that
\[
\deg_{X_{\textbf a}}(\phi_{\textbf a}([I(\expand(\widetilde \P))])) = \begin{cases}
1 & \text{if $\textbf a \in I(\widetilde \P)$}\\
0 & \text{otherwise.}
\end{cases}
\]
\end{lemma}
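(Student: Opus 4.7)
The plan is to reduce the degree computation to the stellahedral setting via the birational pullback $u^\ast$, where the cube decomposition of Proposition~\ref{prop:cubes} and the stellahedral exceptional isomorphism of Theorem~\ref{thm:stellaHRR} apply directly. By the commutative diagram of Theorem~\ref{thm:polystellaHRR} and the fact that $u^\ast$ preserves degrees on top Chow classes (since $u_\ast u^\ast = \mathrm{id}$ on $A^\bullet(X_{\textbf a})$ by Lemma~\ref{lem:pullback}), one obtains
\[
\deg_{X_{\textbf a}}\bigl(\phi_{\textbf a}([I(\expand(\widetilde \P))])\bigr) = \deg_{X_\EE}\bigl(\phi_\EE([I(\expand(\widetilde \P))])\bigr),
\]
where on the right side the class is interpreted in $K(X_\EE)$; this is legitimate because $I(\expand(\widetilde \P))$, being a deformation of $\Sigma_{\textbf a}$, is automatically a deformation of the refinement $\Sigma_\EE$.

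Next I would apply Proposition~\ref{prop:cubes} to decompose
\[
[I(\expand(\widetilde \P))] = [I(\lift(\widetilde \P))] + \sum_k c_k [I(\M_k)] \quad \text{in } K(X_\EE),
\]
where each matroid $\M_k$ has rank strictly less than $n = \rk(\widetilde \P)$. The proof of Proposition~\ref{prop:cubes} uses only that $\expand(\widetilde \P)$ has rank $n$, which holds since $\rk_{\expand(\widetilde \P)}(\EE) = \rk_{\widetilde \P}(\E) = n$; it does not require $\widetilde \P$ to be of type $\textbf a$. By Theorem~\ref{thm:stellaHRR}, $\phi_\EE([I(\M)])$ is supported in degrees at most $\rk(\M)$, so the terms indexed by $\M_k$ contribute zero to $A^n(X_\EE)$. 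Hence the top-degree piece of $\phi_\EE([I(\expand(\widetilde \P))])$ equals that of $\phi_\EE([I(\lift(\widetilde \P))])$, which by Theorem~\ref{thm:stellaHRR} is $[\Sigma_{\lift(\widetilde \P)^\perp}]$ when $\rk(\lift(\widetilde \P)) = n$ and is zero otherwise.

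To finish, observe from the defining formula of the multisymmetric lift that
\[
\rk_{\lift(\widetilde \P)}(\EE) \;=\; \min_{A \subseteq \E}\Bigl(\rk_{\widetilde \P}(A) + n - \textstyle\sum_{i \in A} a_i\Bigr),
\]
which attains the value $n$ at $A = \E$. Thus $\rk(\lift(\widetilde \P)) = n$ if and only if $\rk_{\widetilde \P}(A) \geq \sum_{i \in A} a_i$ for every $A \subseteq \E$, which is precisely the condition $\textbf a \in I(\widetilde \P)$. In that case $\EE$ is a basis of $\lift(\widetilde \P)$, so $\lift(\widetilde \P)$ is the free matroid on $\EE$ and $\lift(\widetilde \P)^\perp$ is the rank-zero matroid with all elements loops; this matroid has no proper flats and only the empty independent set, so its augmented Bergman fan consists of only the origin cone. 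The class $[\Sigma_{\lift(\widetilde \P)^\perp}]$ is therefore the generator of $A_0(X_\EE) = A^n(X_\EE)$ dual to $1 \in A^0(X_\EE)$, a point class of degree $1$. Combined with the vanishing in the opposite case, this yields the stated dichotomy.

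The main non-routine step is identifying the combinatorial condition $\textbf a \in I(\widetilde \P)$ with the full-rank attainment of $\lift(\widetilde \P)$; once this equivalence is in hand, the remainder assembles cleanly from the structural results of Sections~3 and~4.
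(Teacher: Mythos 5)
Your proposal is correct and follows essentially the same route as the paper: reduce via $u^*$ and the commutative diagram to $X_\EE$, apply Proposition~\ref{prop:cubes} to isolate $[I(\lift(\widetilde\P))]$ as the only term contributing in top degree, and then use Theorem~\ref{thm:stellaHRR} to identify the top-degree piece with $[\Sigma_{\lift(\widetilde\P)^\perp}]$, which is the point class exactly when $\lift(\widetilde\P)$ has rank $n$. The only cosmetic difference is the final identification of $\rk(\lift(\widetilde\P))=n$ with $\textbf a\in I(\widetilde\P)$: you unwind the multisymmetric lift rank formula directly, whereas the paper appeals to Lemma~\ref{lem:projection} and the fact that $p_\pi(1,\ldots,1)=\textbf a$; both are valid and essentially equivalent.
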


\begin{proof}
Let $\widetilde \P'$ be the polymatroid with cage $\textbf a$ defined by $I(\widetilde \P') = I(\widetilde \P) \cap \prod_{i\in E} [0,a_i]$.  By \Cref{prop:cubes} and the commuting diagram in \Cref{thm:polystellaHRR}, we have that
\[
\deg_{X_{\textbf a}} (\phi_{\textbf a}([I(\expand(\widetilde \P))])) = \deg_{X_{\EE}} ([\Sigma_{\lift(\widetilde \P')^\perp}]),
\]
which is zero unless $\lift(\widetilde \P')$ has rank $n$.
When $\lift(\widetilde \P')$ has rank $n$, that is, it is the boolean matroid on $\EE$, we have that $[\Sigma_{\lift(\widetilde \P')^\perp}]$ is the class of a point in $A_0(X_\EE) = A^n(X_\EE)$, and hence $\deg_{X_{\EE}} ([\Sigma_{\lift(\widetilde \P')^\perp}]) = 1$ in this case.
Now, note that $\lift(\widetilde \P')$ has rank $n$, or equivalently $(1, \ldots, 1) \in I(\lift(\widetilde \P'))$, if and only if $\textbf a\in I(\widetilde \P')$ by \Cref{lem:projection}, and by construction $\textbf a\in I(\widetilde \P')$ if and only if $\textbf a\in I(\widetilde \P)$.
\end{proof}

\begin{proof}[Proof of \Cref{cor:basisgenfct}]
Follows from \Cref{lem:HR} and \Cref{thm:val}.
\end{proof}

\begin{remark}
At least when $\P$ is realizable, Corollary~\ref{cor:basisgenfct} implies Theorem~\ref{thm:HR}, as follows. For a realization $L \subseteq \bigoplus_{i \in \E} V_i$ of $\P$, let $V_S = \bigoplus_{i\in S} V_i$ for $\emptyset \subsetneq S\subseteq E$.  Collecting the projection maps $L \hookrightarrow \bigoplus_{i\in E} V_i \to V_S$, we obtain an inclusion
$$L \hookrightarrow \bigoplus_{\emptyset \subsetneq S \subseteq \E} V_S,$$
which is a realization of a polymatroid $\P'$ with ground set $\{S : \emptyset \subsetneq S \subseteq \E\}$.
Let $\{\mathbf f_S\}$ denote the set of standard basis vectors of $\RR^{\{S : \emptyset \subsetneq S \subseteq E\}}$, to avoid confusion with $\be_S  = \sum_{i\in S} \be_S \in \RR^E$.
A collection of subsets $S_1, \dotsc, S_r$ of $E$ satisfies $\mathbf f_{S_1} + \cdots + \mathbf f_{S_r} \in B(\P')$ if and only if it satisfies the Hall--Rado condition (with respect to $\P$), so applying Corollary~\ref{cor:basisgenfct} recovers Theorem~\ref{thm:HR}.
\end{remark}

\begin{remark}
One can also prove Corollary~\ref{cor:basisgenfct} by using Theorem~\ref{thm:val} to reduce to the case of realizable polymatroids, when Corollary~\ref{cor:basisgenfct} is \cite[Proposition 7.15]{CCMM} (and can also be deduced from \cite{LiImages}). By Remark~\ref{rem:schubert}, in order to check that two valuative functions are equal, it suffices to check on realizable polymatroids. The valuativity of $[\Sigma_{\P}]$ implies that the volume polynomial of $A^{\bullet}(\P)$ is valuative, and it is clear from the definition of valuativity that the basis generating function of a polymatroid is valuative. 
\end{remark}

\section{Polypermutohedra}

Let $\pi\colon \EE \to \E$ be with cage $\textbf a$.  The polystellahedral fan $\Sigma_\pi$ has the distinguished ray $\rho_\emptyset = \RR_{\geq 0}(-\be_\EE)$.  The star of the fan $\Sigma_\pi$ at the ray $\rho_\emptyset$ is the \emph{polypermutohedral fan} $\underline\Sigma_\pi$ introduced in \cite{CHLSW} as the Bergman fan of the boolean polymatroid with cage $\textbf a$. Explicitly, the cones of $\underline{\Sigma}_{\pi}$ are in bijection with pairs $S \le \mathcal{F}$, where $\mathcal{F} = \{\emptyset \subsetneq F_1 \subsetneq \dotsb \subsetneq F_k \subsetneq F_{k+1} = \E\}$ is a flag of proper subsets of $\E$ and $S$ is a subset of $\EE$ containing no fiber of $\pi$. 
Let $\underline X_{\textbf a}$ be the associated toric variety, which we call the \emph{polypermutohedral variety} with cage $\textbf a$, with the embedding $\iota\colon \underline X_{\textbf a} \hookrightarrow X_{\textbf a}$ as the toric divisor corresponding to the ray $\rho_\emptyset$. We set $\underline{X}_{\emptyset} = \mathrm{pt}$. 

\medskip
Suppose $\P$ is a polymatroid with cage $\textbf a$ and rank $r$.
We note the following fact about the pullback $\iota^*[\Sigma_\P]\in A_{r-1}(\underline X_{\textbf a})$.
The augmented Bergman fan $\Sigma_\P$ contains the ray $\rho_\emptyset$ if and only if $\P$ is loopless.
Hence, if $\P$ has a loop, then $\iota^*[\Sigma_\P]=0$.
If $\P$ is loopless, the star of $\Sigma_\P$ at the ray $\rho_\emptyset$ is the \emph{Bergman fan} $\underline \Sigma_\P$ of $\P$ introduced in \cite[Definition 1.6]{CHLSW}.  It is an $(r-1)$-dimensional balanced subfan of $\underline \Sigma_\pi$, and the resulting the \emph{Bergman class} $[\underline \Sigma_\P] \in A_{r-1}(\underline X_{\textbf a})$ equals the pullback $\iota^*[\Sigma_\P]$.

\medskip
Using Bergman fans and Bergman classes of loopless polymatroids, we establish analogues of the main theorems \Cref{thm:val} and \Cref{thm:HR} in the polypermutohedral setting.

\subsection{The valuative group of loopless polymatroids}

Define a subgroup of $\operatorname{Val}_r(\textbf a)$ by
\[
\operatorname{Val}_r^{\circ}(\textbf a) = \text{ the subgroup generated by }\{\one_{B(\P)}: \P \text{ a loopless polymatroid with cage $\textbf a$ and rank $r$}\}.
\]
Note that $\operatorname{Val}_0^{\circ}(\textbf a) = 0$.
We have the following analogue of \Cref{thm:val}.

\begin{theorem}\label{thm:permloopless}
For any $1\leq r \leq n$, the map that sends a loopless polymatroid $\P$ with cage $\textbf a$ and rank $r$ to the Bergman class $[\underline\Sigma_\P]$ induces an isomorphism $\operatorname{Val}_r^\circ(\textbf a) \overset\sim\to A_{r-1}(\underline X_{\textbf a})$.
\end{theorem}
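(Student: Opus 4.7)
The plan is to follow closely the structure of the proof of Theorem~\ref{thm:val}, reducing to the matroid case $\textbf a = (1,\dotsc,1)$ via the birational map of polypermutohedral varieties obtained by restricting $u\colon X_\EE \to X_{\textbf a}$. The main tool throughout is the pullback $\iota^*\colon A_r(X_{\textbf a}) \to A_{r-1}(\underline X_{\textbf a})$, which by the paragraph preceding the theorem satisfies $\iota^*[\Sigma_\P] = [\underline\Sigma_\P]$ when $\P$ is loopless and $\iota^*[\Sigma_\P] = 0$ otherwise.

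First I would show well-definedness: the assignment $\one_{B(\P)} \mapsto [\underline\Sigma_\P]$ agrees with the composition
\[
\operatorname{Val}_r^\circ(\textbf a) \hookrightarrow \operatorname{Val}_r(\textbf a) \overset{\sim}{\to} A_r(X_{\textbf a}) \xrightarrow{\iota^*} A_{r-1}(\underline X_{\textbf a}),
\]
where the middle isomorphism is Theorem~\ref{thm:val}, so there is nothing to check. For surjectivity, Corollary~\ref{cor:augspan} says that augmented Bergman classes span $A_r(X_{\textbf a})$, and $\iota^*$ is surjective on Chow cohomology (hence on homology) because every toric divisor of $\underline X_{\textbf a}$ arises as the restriction of a toric divisor of $X_{\textbf a}$: each ray of $\underline\Sigma_\pi$ corresponds to a $2$-dimensional cone $\rho_\emptyset + \rho$ of $\Sigma_\pi$, and smooth projective toric Chow rings are generated by their toric divisor classes. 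Combined with the vanishing of $\iota^*[\Sigma_\P]$ for $\P$ with a loop, this shows the loopless Bergman classes span $A_{r-1}(\underline X_{\textbf a})$.

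For injectivity, I would restrict $u\colon X_\EE \to X_{\textbf a}$ to the divisor corresponding to $\rho_\emptyset$ to obtain a toric map $\underline u\colon \underline X_\EE \to \underline X_{\textbf a}$. Arguing with the Minkowski weight formula exactly as in Lemma~\ref{lem:augpullback}, one gets $\underline u^*[\underline\Sigma_\P] = [\underline\Sigma_{\lift(\P)}]$; moreover $\underline u^*$ is a split injection on Chow homology, by the polypermutohedral analogue of Lemma~\ref{lem:pullback}(1). Assuming the matroid case of the theorem (i.e., $\textbf a = (1,\dotsc,1)$) for the ordinary permutohedral variety $\underline X_\EE$, a relation $\sum_j c_j [\underline\Sigma_{\P_j}] = 0$ with loopless $\P_j$'s pulls back to $\sum_j c_j [\underline\Sigma_{\lift(\P_j)}] = 0$ in $A_{r-1}(\underline X_\EE)$, yielding $\sum_j c_j \one_{B(\lift(\P_j))} = 0$ by the matroid case, and pushing forward under $p_\pi$ using $p_\pi(B(\lift(\P_j))) = B(\P_j)$ (Lemma~\ref{lem:projection}) gives $\sum_j c_j \one_{B(\P_j)} = 0$.

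The main obstacle I expect is the matroid version of the theorem, which is not stated directly in the excerpt. I would prove it by mirroring the proof of Theorem~\ref{thm:val}: construct an exceptional-isomorphism-style map on the $K$-ring of $\underline X_\EE$ (arising from a polytope algebra of loopless-polymatroid base polytopes in $\RR^\EE$), show the graded-top piece of $\phi_\EE([I(\expand(\P^\perp))])$ restricted to $\underline X_\EE$ is the Bergman class $[\underline\Sigma_{\P^\perp}]$, and conclude the valuative assignment is an isomorphism via the same surjectivity/reduction argument. Given this matroid input, the extension to arbitrary type $\textbf a$ is a direct analogue of the argument already used for Theorem~\ref{thm:val}.
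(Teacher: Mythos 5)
Your proposal is correct in outline, but it takes a genuinely different route from the paper's main proof and is in fact the alternate argument the authors themselves sketch in Remark~\ref{rem:altperm}. The paper's proof sidesteps the matroid case entirely: it uses the claim from the proof of \cite[Lemma 5.9]{BEST} to decompose $\operatorname{Val}_r(\textbf{a}) = \bigoplus_{J \subseteq E} \operatorname{Val}_r^{\circ}(\textbf{a} \setminus J)$, establishes a matching Poincar\'e polynomial identity for $A^{\bullet}(X_{\textbf a})$ in terms of the $A^{\bullet}(\underline X_{\textbf a\setminus J})$ (Lemma~\ref{lem:hpoly}), and then inducts on $|E|$; once one knows $\operatorname{rank} A_{r-1}(\underline X_{\textbf a}) = \operatorname{rank}\operatorname{Val}^\circ_r(\textbf a)$, the surjectivity of $\iota^*$ (which you also identify) immediately forces the composite $\operatorname{Val}_r^\circ(\textbf a) \to A_r(X_{\textbf a}) \to A_{r-1}(\underline X_{\textbf a})$ to be an isomorphism of free abelian groups of equal rank. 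Your route instead pushes all the difficulty onto the base case $\textbf a = (1,\dotsc,1)$ of the theorem, which requires building a polypermutohedral analogue of the exceptional isomorphism $\phi_{\textbf a}$ from scratch; the paper outsources that to \cite{EFLS} and treats it as a remark rather than the main argument. What your approach buys is a proof strategy exactly parallel to that of Theorem~\ref{thm:val}, and it would produce the polypermutohedral exceptional isomorphism as a byproduct; what the paper's counting argument buys is self-containedness, avoiding the need to redevelop the $K$-theoretic machinery for $\underline X_{\textbf a}$. Be aware that your injectivity step via $\underline u^*[\underline\Sigma_\P] = [\underline\Sigma_{\lift(\P)}]$ requires a polypermutohedral analogue of Theorem~\ref{thm:augbergmanfan} (taking stars at $\rho_\emptyset$ on both sides), which is true but should be stated; and the matroid base case is genuinely substantial, not a formality, so your sketch there is the real gap between your proposal and a complete proof.
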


We will deduce \Cref{thm:permloopless} from \Cref{thm:val} by identifying the kernel of the map $\operatorname{Val}_r(\textbf a) \overset\sim\to A_r(X_{\textbf a}) \overset{\iota^*}\to A_{r-1}(\underline X_{\textbf a})$ with the subgroup of $\operatorname{Val}_{r}(\textbf a)$ generated by polymatroids with loops.
An alternate proof that does not rely on \Cref{thm:val} but proceeds by developing the polypermutohedral analogue of \Cref{thm:polystellaHRR} is sketched in \Cref{rem:altperm}.

\medskip
Before proving Theorem~\ref{thm:permloopless}, we relate the Poincar\'{e} polynomial of the polystellahedral variety to the Poincar\'{e} polynomials of polypermutohedral varieties. For $J \subseteq E$, let $\textbf{a} \setminus J$ be the vector obtained by removing the entries corresponding to $J$. Recall that $\underline{X}_{\emptyset}$ is a point. 

\begin{lemma}\label{lem:hpoly}
We have that 
$$\sum_{i=0}^{n} \operatorname{rank} A^i(X_{\textbf{a}}) t^i =  t^{n} \operatorname{rank} A^0(\underline{X}_{\emptyset}) + \sum_{\emptyset \subseteq J \subsetneq E}  t^{|\pi^{-1}(J)|} \sum_{i=0}^{n - |\pi^{-1}(J)| - 1}\operatorname{rank} A^i(\underline X_{\textbf{a} \setminus J}) t^i .$$
\end{lemma}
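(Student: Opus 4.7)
The plan is to prove the identity cone-by-cone on the polystellahedral fan. For a smooth projective toric variety $X_\Sigma$ of dimension $d$, the standard formula
\[
\sum_i (\operatorname{rank} A^i(X_\Sigma))\, t^i \;=\; \sum_{\sigma \in \Sigma} (t-1)^{d - \dim \sigma}
\]
(see, e.g., \cite[Theorem 12.3.11]{CLS11}) expresses both sides of the claimed identity as explicit sums over cones of the relevant fans, reducing the lemma to a combinatorial partition of the compatible pairs defining cones of $\Sigma_{\textbf{a}}$.

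I would stratify each compatible pair $\sigma_{I \le \mathcal{F}}$ by the subset $J \subseteq \E$ defined to be $F_1$ when $\mathcal{F} = \{F_1 \subsetneq \cdots \subsetneq F_k \subsetneq \E\}$ has $k \ge 1$, and $J := \E$ when $k = 0$. The compatibility condition ``$\pi^{-1}(S) \subseteq I \Rightarrow S \subseteq F_1$'' then reduces to: no fiber $\pi^{-1}(i)$ with $i \in \E \setminus J$ is contained in $I$, while $I \cap \pi^{-1}(J)$ is unconstrained. Decomposing $I = I_J \sqcup I'$ with $I_J \subseteq \pi^{-1}(J)$ arbitrary and $I' \subseteq \pi^{-1}(\E\setminus J)$ containing no full fiber, I would sum over $I_J$ first using
\[
\sum_{I_J \subseteq \pi^{-1}(J)} (t-1)^{n - |I_J| - |I'| - k} \;=\; (1+(t-1))^{|\pi^{-1}(J)|}\cdot (t-1)^{(n-|\pi^{-1}(J)|) - |I'| - k} \;=\; t^{|\pi^{-1}(J)|}\,(t-1)^{(n-|\pi^{-1}(J)|) - |I'| - k},
\]
which isolates the factor $t^{|\pi^{-1}(J)|}$ appearing on the right-hand side.

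For $J \subsetneq \E$, I would then reparametrize the tail of the chain via $G_j := F_{j+1} \setminus J$ for $j = 1, \dotsc, k-1$, producing a chain $G_1 \subsetneq \cdots \subsetneq G_{k-1}$ of nonempty proper subsets of $\E \setminus J$. The datum $(I', (G_j))$ is exactly a compatible pair for the polypermutohedral fan $\underline{\Sigma}_{\textbf{a}\setminus J}$, corresponding to a cone $\sigma'$ of dimension $|I'| + (k-1)$. The residual exponent $(n - |\pi^{-1}(J)|) - |I'| - k$ equals $\dim \underline{X}_{\textbf{a}\setminus J} - \dim \sigma'$, so summing over $\sigma' \in \underline{\Sigma}_{\textbf{a}\setminus J}$ recovers $t^{|\pi^{-1}(J)|} \sum_i \operatorname{rank} A^i(\underline X_{\textbf{a}\setminus J})\,t^i$. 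For $J = \E$, the only possibility is $k = 0$ with $I \subseteq \EE$ arbitrary, contributing $\sum_{I \subseteq \EE} (t-1)^{n - |I|} = t^n$, which matches the $J = \E$ term under the convention that $\underline{X}_{()}$ is a point.

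The main obstacle is careful bookkeeping: matching the length-$k$ chains of the polystellahedral fan (which may begin at $F_1 = J$) with the length-$(k-1)$ chains of nonempty proper subsets of $\E \setminus J$ indexing cones of $\underline{\Sigma}_{\textbf{a}\setminus J}$, and correctly handling the boundary case $J = \E$ where $\textbf{a}\setminus J$ is the empty vector. No further geometric input is required beyond the toric Poincar\'e polynomial formula and the explicit combinatorial description of the two fans.
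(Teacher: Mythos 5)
Your proposal is correct and takes essentially the same approach as the paper: both arguments reduce the Poincaré polynomial identity to a cone-by-cone bijection between cones of $\Sigma_{\textbf{a}}$ (stratified by the smallest flag element $F_1 = J$, or $J = \E$ when the flag is empty) and pairs consisting of a cone of $\underline{\Sigma}_{\textbf{a}\setminus J}$ and a subset of $\pi^{-1}(J)$. You express this using the formula $\sum_i \operatorname{rank} A^i(X_\Sigma)\, t^i = \sum_{\sigma}(t-1)^{d-\dim\sigma}$, whereas the paper phrases the same bijection in terms of the $f$-polynomial identity $f(\Sigma_{\textbf a})(t) = (1+t)^n + \sum_J t(1+t)^{|\pi^{-1}(J)|} f(\underline\Sigma_{\textbf a\setminus J})(t)$ and passes to $h$-polynomials; these are equivalent bookkeeping devices, and the reparametrization of chains you give ($G_j := F_{j+1}\setminus J$) is precisely the inverse of the paper's ``add $J$ to every element of the flag.'' If anything, your stratification (indexing by all $J\subseteq\E$, with $J=\E$ capturing the empty-chain cones) is slightly cleaner at the edge cases than the paper's summation index, which has $J$ ranging over $\emptyset \subsetneq J \subseteq \E$ and relies on interpreting $\underline\Sigma_{()}$ appropriately.
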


\begin{proof}
As the Poincar\'{e} polynomial of a smooth projective toric variety is the $h$-polynomial of its fan, it is enough to show that
$$f({\Sigma_{\textbf{a}}})(t) = (1 + t)^n + \sum_{\emptyset \subseteq J \subsetneq E} t(1 + t)^{|\pi^{-1}(J)|} f({\underline{\Sigma}_{\textbf{a} \setminus J}}) (t),$$
where $f(\Sigma)$ is the $f$-polynomial of a fan $\Sigma$. We prove this bijectively. To each cone $\sigma$ of some $\underline{\Sigma}_{\textbf{a} \setminus J}$ corresponding to a pair $S \le \mathcal{F}$, we obtain $2^{|\pi^{-1}(J)|}$ cones of $\Sigma_{\textbf{a}}$ by adding $J$ to every element of the flag and then adding all $2^{|\pi^{-1}(J)|}$ possible subsets of $\pi^{-1}(J)$ to $S$. When $J \not= E$ and we add $k$ elements to $S$, this gives a cone of dimension $\operatorname{dim} \sigma + k + 1$. When $J = E$ and we add $k$ elements to $S$, this gives a cone of dimension $k$. 
\end{proof}

\begin{proof}[Proof of Theorem~\ref{thm:permloopless}]
For any $i\in \E$, a polymatroid base polytope $B(\P)$ is always contained in the half-space $\{x\in \RR^\E : x_i \geq 0\}$, and it is contained in the hyperplane $\{x\in \RR^\E : x_i = 0\}$ if and only if $\P$ has $i$ as a loop.
Thus, the claim in the proof of \cite[Lemma 5.9]{BEST} implies that  we have a decomposition
\[
\operatorname{Val}_r(\textbf{a}) = \bigoplus_{J \subseteq E} \operatorname{Val}_r^{\circ}(\textbf{a} \setminus J)
\]
given by sending a loopless polymatroid $\P$ of rank $r$ on $E \setminus J$ to the polymatroid on $E$ with $\operatorname{rk}(S) = \operatorname{rk}_{\operatorname{P}}(S \cap J)$.
We now induct on the size of $E$, where the base case $|E|=1$ is straightforward.
Comparing the decomposition of $\operatorname{Val}_r(\textbf a)$ above with Lemma~\ref{lem:hpoly}, we see that the induction hypothesis implies $\operatorname{rank} A_{r-1}(\underline{X}_{\textbf{a}}) = \operatorname{rank} \operatorname{Val}^{\circ}_r(\textbf{a})$.

By the construction of the permutohedral fan $\underline \Sigma_\pi$ as the star of ray $\rho_\emptyset$ in $\Sigma_\pi$, every ray of $\underline\Sigma_\pi$ is the image of a ray in $\Sigma_\pi$ that forms a cone with $\rho_\emptyset$.  Hence, the pullback $\iota^*\colon A^\bullet(X_{\textbf a}) \to A^\bullet(\underline X_{\textbf a})$ is surjective because $\iota^*\colon A^1(X_{\textbf a}) \to A^1(\underline X_{\textbf a})$ is.
We thus have a surjection $\iota^*\colon A_r(X_{\textbf{a}}) \to A_{r-1}(\underline{X}_{\textbf{a}})$ that satisfies $\iota^*[\Sigma_{\P}]= [\underline{\Sigma}_{\P}]$ if $\P$ is loopless and $\iota^*[\Sigma_{\P}] = 0$ otherwise. Therefore the composition 
$$\operatorname{Val}_r^\circ(\textbf a) \to A_r(X_{\textbf{a}}) \to A_{r-1}(X_{\textbf{a}})$$
is a surjection of finite free abelian groups of the same rank, and hence is an isomorphism. 
\end{proof}

\begin{remark}\label{rem:altperm}
We sketch an alternate proof of \Cref{thm:permloopless}.
First, arguing as in \cite[Proof of Theorem D]{EFLS}, one shows an isomorphism $\bigoplus_{r = 1}^n\operatorname{Val}_r^\circ(\textbf a) \simeq K(\underline X_{\textbf a})$ when $\textbf a= (1, \ldots, 1)$, and uses it to deduce \Cref{thm:permloopless} for the $\textbf a = (1, \ldots, 1)$ case.  Now, using that polypermutohedral fans are coarsenings of the permutohedral fan $\underline \Sigma_\EE$, just as polystellahedral fans are coarsenings of the stellahedral fan, one similarly deduces the polypermutohedral analogue of \Cref{thm:polystellaHRR}.  Then, one deduces \Cref{thm:permloopless} the same way that we proved \Cref{thm:val} here.
\end{remark}

\subsection{The dragon Hall--Rado formula}\label{ssec:DHR}
Let $\underline X_{\textbf a}$ be the polypermutohedral variety, with the embedding $\iota\colon \underline X_{\textbf a} \hookrightarrow X_{\textbf a}$ as the toric divisor corresponding to the ray $\rho_{\emptyset}$. The following theorem generalizes \cite[Theorem 5.2.4]{BES}.

\begin{theorem}\label{thm:dHR}
For a polymatroid $\P = (\E,\rk_{\P})$ of rank $r$, a collection of subsets $S_1, \ldots, S_{r-1}$ is said to satisfy the \emph{dragon Hall--Rado} condition if
\[\rk\Big( \bigcup_{j\in J} S_j\Big) \geq |J|+1 \quad\text{for all nonempty $J\subseteq [r-1]$}.
\]
Then, if $\P$ is loopless, we have
\[
\deg_{X_{\textbf a}}(h_{S_1} \cdots h_{S_{r-1}} [\Sigma_\P] [\underline X_{\textbf a}]) =
\begin{cases}
1 & \text{if the dragon Hall--Rado condition is satisfied}\\
0 & \text{otherwise}.
\end{cases}
\]
\end{theorem}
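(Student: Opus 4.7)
The plan is to reduce \Cref{thm:dHR} to its matroid counterpart \cite[Theorem~5.2.4]{BES} in two stages: first from $X_{\textbf a}$ to $\underline X_{\textbf a}$ via the projection formula, and then from $\underline X_{\textbf a}$ to the permutohedral variety $\underline X_\EE$ via a polypermutohedral analog of \Cref{lem:pullback} and \Cref{lem:augpullback}.

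First I would invoke the identity $\iota^*[\Sigma_\P]=[\underline\Sigma_\P]$ for loopless $\P$ noted earlier in this section. Combined with the projection formula for the divisor embedding $\iota\colon\underline X_{\textbf a}\hookrightarrow X_{\textbf a}$, this rewrites the left-hand side of \Cref{thm:dHR} as
\[
\deg_{\underline X_{\textbf a}}\bigl(\iota^*h_{S_1}\cdots \iota^*h_{S_{r-1}}\cdot[\underline\Sigma_\P]\bigr).
\]
Next, because taking the star at a common ray commutes with refinement, the refinement $\Sigma_\EE\succeq\Sigma_\pi$ of \Cref{prop:polystellafan} restricts to a refinement $\underline\Sigma_\EE\succeq\underline\Sigma_\pi$, inducing a proper birational toric morphism $\underline u\colon\underline X_\EE\to\underline X_{\textbf a}$ with $\iota_{\textbf a}\circ\underline u=u\circ\iota_\EE$. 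By functoriality of pullback together with \Cref{lem:pullback}(3) and \Cref{lem:augpullback},
\[
\underline u^*(\iota^*h_S)=\iota^*h_{\pi^{-1}(S)}\quad\text{and}\quad\underline u^*[\underline\Sigma_\P]=[\underline\Sigma_{\lift(\P)}],
\]
and the usual projection formula makes $\underline u_*\underline u^*$ the identity, so $\underline u^*$ preserves degrees of top cohomology classes.

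Combining these reductions, the degree in \Cref{thm:dHR} equals
\[
\deg_{\underline X_\EE}\bigl(\iota^*h_{\pi^{-1}(S_1)}\cdots \iota^*h_{\pi^{-1}(S_{r-1})}[\underline\Sigma_{\lift(\P)}]\bigr),
\]
which by the matroid dragon Hall--Rado formula \cite[Theorem~5.2.4]{BES} equals $1$ exactly when $\pi^{-1}(S_1),\dots,\pi^{-1}(S_{r-1})$ satisfies the dragon Hall--Rado condition with respect to $\lift(\P)$, and $0$ otherwise. The translation back to $\P$ is then immediate: using $\bigcup_{j\in J}\pi^{-1}(S_j)=\pi^{-1}(\bigcup_{j\in J}S_j)$ together with $\rk_{\lift(\P)}(\pi^{-1}(T))=\rk_\P(T)$, the matroid condition on $\lift(\P)$ is equivalent to the dragon Hall--Rado condition on $\P$ for $S_1,\dots,S_{r-1}$.

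The main technical point is the polypermutohedral analog of \Cref{lem:pullback}: one must verify that $\underline\Sigma_\EE$ refines $\underline\Sigma_\pi$ and that the induced map $\underline u$ interacts as expected with $\iota$. Both should follow from a careful tracking of the stellar subdivisions in \Cref{prop:polystellafan}, since each inserted ray either is adjacent to $\rho_\emptyset$ (in which case it induces a stellar subdivision on the star) or is not (in which case the star is unaffected), and both cases are compatible with the corresponding toric morphism.
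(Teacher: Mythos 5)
Your proof is correct, but it takes a genuinely different route from the paper's. You reduce the statement to the matroid case \cite[Theorem~5.2.4]{BES} via a two-stage pullback: first along the divisor embedding $\iota$ to $\underline X_{\textbf a}$, then along the birational map $\underline u\colon \underline X_\EE\to\underline X_{\textbf a}$ to the permutohedral variety, using the polypermutohedral analogues of \Cref{lem:pullback} and \Cref{lem:augpullback}. This is very much in the spirit of the paper's earlier reduction arguments (e.g., the proof of \Cref{thm:polymatroidintersect}), and the translation between the dragon Hall--Rado condition for $\P$ and for $\lift(\P)$ via $\rk_{\lift(\P)}\circ\pi^{-1}=\rk_\P$ is clean. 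The paper's own proof is instead self-contained: it expresses $x_\emptyset = -\sum_{\emptyset\subsetneq S\subseteq\E}(-1)^{|S|}h_S$, applies \Cref{thm:HR} term by term, and then handles the vanishing case by showing that the collection of nonempty $T$ for which $S_1,\dotsc,S_{r-1},T$ fails Hall--Rado is a boolean sublattice, so its alternating sum vanishes. The paper's argument thereby gives a new proof of \cite[Theorem~5.2.4]{BES} as the $\textbf a=(1,\dotsc,1)$ case, whereas yours consumes that result as an input. Note also the paper's remark that sketches yet a third route via the polypermutohedral analogue of \Cref{thm:polystellaHRR}; your route is distinct from both. The one point worth tightening in your write-up is the deduction that $\underline\Sigma_\EE$ refines $\underline\Sigma_\pi$: rather than tracking individual stellar subdivisions, it suffices to observe that $\rho_\emptyset$ is a common ray of $\Sigma_\EE$ and $\Sigma_\pi$ and that taking stars at a common ray preserves refinement of fans, which immediately gives the commuting square $\iota_{\textbf a}\circ\underline u = u\circ\iota_\EE$ and hence $\underline u^*\iota_{\textbf a}^* = \iota_\EE^* u^*$.
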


\begin{proof}
Note that, in $A^{\bullet}(X_{\textbf{a}})$, we have that $x_{\emptyset} = -\sum_{\emptyset \subsetneq S \subseteq E} (-1)^{|S|} h_S$. Then, for any $S_1, \dotsc, S_{r-1}$,
\begin{equation}\label{eq:sum}
\deg_{X_{\textbf{a}}}(h_{S_1} \dotsb h_{S_{r-1}}[\Sigma_{\P}] [\underline{X}_{\textbf{a}}]) = -\sum_{\emptyset \subsetneq S \subseteq E} (-1)^{|S|} \deg_{\operatorname{P}}(h_{S_1} \dotsc h_{S_{r-1}} h_S).
\end{equation}
Suppose we have sets $S_1, \dotsc, S_{r-1}$ that satisfy the dragon Hall--Rado condition. Because $\P$ is loopless, every term in the above sum corresponds to $r$ sets that satisfy the Hall--Rado condition, and so each term is $(-1)^{|S|}$. Because the sum is over nonempty sets, this gives the result. 

Suppose that $S_1, \dotsc, S_{r-1}$ fails the dragon Hall--Rado condition. There is some nonempty subset $T$ of $E$ such that $S_1, \dotsc, S_{r-1}, T$ fails the Hall--Rado condition; we may take $T = S_i$ for some $i$. 
Let $T_1, T_2$ be nonempty subsets of $E$ such that $S_1, \dotsc, S_{r-1}, T_1$ and $S_1, \dotsc, S_{r-1}, T_2$ both fail the Hall--Rado condition. We claim that $S_1, \dotsc, S_{r-1}, T_1 \cup T_2$ fails the Hall--Rado condition. Indeed, if there is a function $f \colon [r] \to \E$ as in Lemma~\ref{lem:HR} with $f(r) \in T_1 \cup T_2$, then $f(r)$ lies in $T_1$ or $T_2$, contradicting the assumption. 

This implies that the set $\{T : \emptyset \subsetneq T \subseteq E, \text{ } S_1, \dotsc, S_{r-1}, T \text{ fails Hall--Rado}\}$ is nonempty and has a unique maximal element. Furthermore, this set is downward closed: if $S_1, \dotsc, S_{r-1}, T$ fails the Hall--Rado condition and $\emptyset \subsetneq T' \subseteq T$, then $S_1, \dotsc, S_{r-1}, T'$ fails the Hall--Rado condition. This implies that the sum in (\ref{eq:sum}) is zero.
\end{proof}

\begin{remark}
Theorem~\ref{thm:dHR} can be alternatively proved along the lines of Theorem~\ref{thm:HR}, by using the polypermutohedral analogue of Theorem~\ref{thm:polystellaHRR} and a reformation of the dragon Hall--Rado condition in terms of a matching condition as in \cite[Proposition 5.2.3]{BES}. 
\end{remark}

\bibliographystyle{alpha}
\bibliography{polyval.bib}

\end{document}